\documentclass[10pt,reqno]{amsart}

\usepackage[T1]{fontenc}
\usepackage{amsmath,amssymb,amsthm,mathtools}
\usepackage{enumitem}
\usepackage{microtype}
\usepackage{xcolor}
\usepackage[colorlinks=true,linkcolor=blue,citecolor=blue,urlcolor=blue]{hyperref}
\usepackage[capitalize,nameinlink]{cleveref}

\newtheorem{theorem}{Theorem}[section]
\newtheorem{proposition}[theorem]{Proposition}
\newtheorem{lemma}[theorem]{Lemma}
\newtheorem{corollary}[theorem]{Corollary}
\theoremstyle{definition}
\newtheorem{definition}{Definition}[section]
\theoremstyle{remark}

\numberwithin{equation}{section}

\newcommand{\Cl}{\mathcal C}

\newcommand{\lcm}{\operatorname{lcm}}

\title[Reduction of Zieve's conjecture]
{Reduction of Zieve's Conjecture on the Hurwitz problem to Three Branch Points}

\author[X. Liu]{Xitaiyu Liu$^\dagger$}
\email{lxty@mail.ustc.edu.cn}
\thanks{$^\dagger$X.L. is the corresponding author.}

\author[T. Sun]{Tianyang Sun}
\email{tysun@mail.ustc.edu.cn}
\author[B. Xu]{Bin Xu}
\email{bxu@ustc.edu.cn}
\author[Y. Ye]{Yu Ye}
\email{yeyu@ustc.edu.cn}
\address{School of Mathematical Sciences, University of Science and Technology of China (USTC)\\ Hefei, Anhui 230026, China}

\author[Y. Zhou]{Yi Zhou}
\email{yi\_zhou@ustc.edu.cn}
\address{National Engineering Laboratory for Brain-inspired Intelligence
Technology and Applications, School of Information Science and Technology,
USTC}

\date{July 2026}
\subjclass[2020]{Primary 57M12; Secondary 20B30}
\keywords{Hurwitz existence problem, branched covering, branch datum,
permutation factorization, Zieve conjecture, incidence forest}

\begin{document}

\begin{abstract}
M. E. Zieve conjectured that a candidate datum over the sphere is
realizable whenever the gcd of the parts of every branch partition is one and
it satisfies the lcm condition.  We prove that this
conjecture reduces to its
three-branch-point case.  More precisely, if every Zieve-admissible triple is
realizable, then every Zieve-admissible datum with at least four branch points
is realizable. In particular, Zieve's conjecture implies 
the prime-degree conjecture posed by A. L. Edmonds, R. S. Kulkarni and R. E. Stong.

\par\medskip
\noindent\textbf{Generative-AI disclosure.}
OpenAI's GPT-5.6 Sol and GPT-6 Astra assisted throughout the research and
preparation of this article, including the exploration of reduction strategies,
proof development and checking, consistency checks on notation and terminology,
and manuscript drafting and revision.  The authors reviewed all AI-assisted
material with careful scrutiny and take full responsibility for all mathematical claims and for the
final text.
\end{abstract}

\maketitle
\tableofcontents

\section{Introduction}\label{sec:introduction}

The Hurwitz existence problem asks which combinatorial branch data arise from
branched coverings between closed surfaces.  For a degree-\(d\) cover of the
sphere with branch points \(q_1,\ldots,q_k\), the local degrees above \(q_i\)
form a partition \(\lambda_i\vdash d\).  The Riemann--Hurwitz formula gives a
necessary numerical condition.  By monodromy, realizability is equivalent to
the existence of a transitive product-one tuple of permutations having the
prescribed cycle types;
see Edmonds, Kulkarni, and Stong \cite{EKS1984} and Petronio
\cite{Petronio2020} for the general framework.

The necessary Riemann--Hurwitz condition is far from sufficient in general.
Edmonds, Kulkarni, and Stong developed a systematic permutation-theoretic
approach and proved several fundamental realizability and reduction results
\cite{EKS1984}.  Zheng introduced an effective character-theoretic method for
spherical target data \cite{Zheng2006}.  More recently, F. Baroni and C. Petronio
completed the classification when one branch partition has length two
\cite{BaroniPetronio2024}.  These results leave broad families in which all
partitions have larger length.

Petronio's survey records a conjecture communicated by Zieve in 2019
\cite[Section~3.9]{Petronio2020}.  In the orientable spherical-target setting,
it predicts that every Zieve-admissible datum is realizable.
The conjecture is still open even
for three branch points.  The purpose of this paper is not to resolve that
base case, but to show that no additional obstruction occurs when more branch
points are allowed.  As explained in the same survey, Zieve's
conjecture would also imply the Edmonds--Kulkarni--Stong prime-degree
conjecture, which asserts that every candidate datum of prime degree is
realizable.

\begin{definition}[Zieve-admissible datum]
A degree-\(d\) \emph{branch datum} is a finite multiset of partitions of
\(d\).  We write
\[
  \Lambda=(\lambda_1,\ldots,\lambda_k),\qquad
  \lambda_i\vdash d.
\]
for an indexed representative of this multiset; the ordering of the entries is
not part of the datum.
A partition is \emph{nontrivial} if it is not \([1^d]\).  For
\(\lambda\vdash d\), write
\[
  \ell(\lambda)=\text{number of parts of }\lambda,\qquad
  b(\lambda)=d-\ell(\lambda),
\]
and
\[
  g(\lambda)=\gcd(\text{parts of }\lambda),\qquad
  m(\lambda)=\lcm(\text{parts of }\lambda).
\]
The quantity \(b(\lambda)\) is the branching defect of a
permutation of cycle type \(\lambda\).  
The datum
\(\Lambda\) is a \emph{candidate datum} if every
\(\lambda_i\) is nontrivial and
\[
  \sum_{i=1}^k b(\lambda_i)=2(d+h-1)
\]
for some integer \(h\ge0\), called the \emph{source genus}.  
For convenience, we call the following inequality about the datum $\Lambda$ the \emph{lcm condition}:
\[
  \sum_{i=1}^k\left(1-\frac{1}{m(\lambda_i)}\right)\ne2.
\]
A candidate datum is \emph{Zieve-admissible} if it is
branchwise gcd-one, meaning that \(g(\lambda_i)=1\) for every \(i\), and
satisfies the lcm condition.
\end{definition}

\begin{definition}[Realizability]
A branch datum \(\Lambda\) is \emph{realizable} if its entries can be indexed
as \((\lambda_1,\ldots,\lambda_k)\) and there exist permutations
\(\sigma_i\in\mathfrak S_d\) of cycle types \(\lambda_i\) such that
\[
  \sigma_1\cdots\sigma_k=1
\]
and the subgroup \(\langle\sigma_1,\ldots,\sigma_k\rangle\) acts transitively
on \(\{1,\ldots,d\}\).  This condition is independent of the chosen indexing.
Indeed, adjacent entries can be interchanged by the Hurwitz move
\[
  (\sigma_i,\sigma_{i+1})
  \longmapsto
  (\sigma_i\sigma_{i+1}\sigma_i^{-1},\sigma_i),
\]
which preserves their product and the subgroup generated by all the factors.
\end{definition}

The main result can now be stated as follows.

\begin{theorem}[Main theorem]\label{thm:main}
Fix \(d\ge2\).  If every Zieve-admissible triple of degree \(d\) is realizable,
then every Zieve-admissible datum of degree \(d\) with
\(k\) branches is realizable for every \(k\ge4\).
\end{theorem}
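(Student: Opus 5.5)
We argue by induction on \(k\), using a \emph{merging} step: replace two or more branch partitions by a single partition so that the new datum has fewer branches and is again Zieve-admissible, realize it, and then split the merged permutation back into factors of the original cycle types. The classical splitting lemma of Edmonds--Kulkarni--Stong is the tool for this. If \(\lambda,\mu\vdash d\) and \(\nu\vdash d\) satisfy \(b(\nu)=b(\lambda)+b(\mu)\), and \(\nu\) is obtained from \(\lambda\) by the appropriate "joining" of cycles compatible with \(\mu\), then any permutation of type \(\nu\) factors as \(\alpha\beta\) with types \(\lambda,\mu\) and no loss of defect. The subgroup generated by the refined tuple contains the product \(\alpha\beta\) together with all other factors, so it contains the original transitive group. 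Hence transitivity is automatic, and the Riemann--Hurwitz count, hence the source genus, is preserved. The plan is therefore to choose, for a Zieve-admissible \(\Lambda\) with \(k\ge4\), two entries \(\lambda_i,\lambda_j\) and a partition \(\nu\) with \(b(\nu)=b(\lambda_i)+b(\lambda_j)\le d-1\). We want \(\nu\) to be additively realizable from \(\lambda_i,\lambda_j\) in this sense, with \(g(\nu)=1\), and such that the reduced \((k-1)\)-datum still satisfies the lcm condition. Induction then finishes the argument, with the triples as base case.

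The key steps are as follows. First, we show that for \(k\ge4\) some pair has \(b(\lambda_i)+b(\lambda_j)\le d-1\). Since \(\sum b=2d-2+2h\), we pick the two smallest defects. This fails only when the defects are all large relative to \(d\), which forces \(h\) to be large. In that high-genus regime we instead merge in a way that raises \(b\) only up to \(d-1\) and discards excess genus. Concretely, we replace \(\nu\) by a partition with defect \(b(\lambda_i)+b(\lambda_j)-2t\), using the standard fact that a product may drop defect by an even amount through cycles "cutting" instead of "joining". Second, we construct \(\nu\) explicitly. We start from \(\lambda_i\) and glue its cycles along a forest (the incidence forest of the two factorizations) dictated by \(\lambda_j\). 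There is freedom in which parts get merged, and we use it to ensure the result has a part equal to \(1\), or two coprime parts, so that \(g(\nu)=1\). When \(b(\nu)=d-1\), \(\nu=[d]\) has gcd one trivially. Third, we check the lcm condition.

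The lcm condition is the delicate point. The equality \(\sum(1-1/m)=2\) has very few solutions with \(k'\le 4\) terms: the \((2,2,2,2)\)-type and the Euclidean triples \((3,3,3)\), \((2,4,4)\), \((2,3,6)\). For \(k\ge5\) the sum with \(m\ge2\) always exceeds \(2\), so after merging to \(k-1\ge4\) branches the only danger is a datum with all \(m=2\) and four branches. Such a partition has the form \([2^a1^b]\) with \(b\ge1\) because of gcd one. We avoid this by choosing \(\nu\) with \(m(\nu)\ne2\), for instance by including a part of size at least \(3\), which the gluing freedom allows. When \(k=4\) we merge down to a triple. There we must avoid \(m\)-triples \((3,3,3)\), \((2,4,4)\), \((2,3,6)\), and we again exploit the choice of which pair to merge and of the shape of \(\nu\). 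For example, we can take \(\nu=[d]\) or \([d-1,1]\), whose lcm is \(d\) or \(d-1\), and handle the few small degrees \(d\le6\) by direct inspection.

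The main obstacle is the \(k=4\) step. Here we must find, simultaneously, a pair and a merged \(\nu\) that is splittable, has gcd one, and avoids the three Euclidean lcm patterns, while the defect budget may be tight. This is the case \(h=0\), where the total defect is exactly \(2d-2\) and a merged pair must have defect at most \(d-1\). We would first try to merge the two entries of smallest defect into \(\nu\) with a single long cycle, and check that the remaining two entries cannot both carry lcm values completing a Euclidean triple. If they can, we would switch the pair, or exploit the degenerate small cases separately.
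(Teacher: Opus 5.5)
There is a genuine gap. It starts with a false claim: the one-part partition \([d]\) has \(g([d])=d\ge3\), so it is never gcd-one. The trouble sits in three boundary situations.
\begin{itemize}
\item When the chosen pair has \(b(\lambda_i)+b(\lambda_j)=d-1\), the defect-preserving merge is forced to be \([d]\).
\item When the sum is \(d-2\), the merge is forced to have two parts, and gcd one can then be impossible. For \([2,1^{28}]\) and \([25,3,2]\) in degree \(30\), the only Move A1 outputs are \([28,2]\), \([27,3]\) and \([25,5]\).
\item When no pair is A1-small, the paper's version of your defect-dropping step is Baroni--Petronio's Move A2, whose outputs are \([d]\), \([d-1,1]\) or \([d/2,d/2]\). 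You give no construction with better control of gcd or lcm.
\end{itemize}
In these cases the reduced datum is in general not Zieve-admissible, and never when the new branch is \([d]\) or \([d/2,d/2]\). So neither your induction hypothesis nor the three-point hypothesis applies. This actually happens. In degree \(7\), the datum \(([3,1^4],[5,1,1],[6,1],[6,1])\) is Zieve-admissible of source genus \(2\). Its only A1-small pair is the first two entries, with defect sum \(6=d-1\), and your plan merges them into \([7]\).

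The missing idea is that these boundary cases are not settled by induction at all. The paper feeds the reduced datum into published realizability theorems for candidate data containing a branch \([d]\), \([d-1,1]\), or a branch with two parts: Edmonds--Kulkarni--Stong, Propositions~5.2 and~5.3, and Baroni--Petronio, Theorem~3.1. These impose no gcd condition on the new branch. The paper then rules out every published exceptional family, because each one contains an inherited branch of gcd \(>1\).

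The second gap is the gcd-one step in the remaining range \(b(\lambda_i)+b(\lambda_j)\le d-3\). In a defect-preserving product the incidence graph is a forest, so a part \(1\) can only come from a letter fixed by both factors. Two fixed-point-free gcd-one branches each have defect at least \(\lfloor d/2\rfloor+1\), so they are never A1-small. Pairs of fixed-point branches need their own treatment: Section~\ref{sec:fixed-fixed} does a real case analysis there, and in some sparse families the reserved fixed edge must be abandoned to avoid lcm \(4\) or \(6\). Once those are handled, every remaining A1-small pair is a fixed-point branch \(F\) against a fixed-point-free branch \(B\). For such pairs no part \(1\) exists, and nothing in your sketch produces coprime parts.

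The paper handles these pairs in two different ways.
\begin{itemize}
\item In positive source genus it abandons defect preservation. It uses a genus-lowering product that splits off a fixed point and lowers the genus by one. The bound \(\ell(B)\ge b(F)+3\) leaves room to avoid lcm \(2\) or \(3\).
\item In genus zero it shows the datum must be \((F,B_1,B_2,B_3)\) and derives \(n_1+n_2+n_3=d+b(F)+2\). It then uses single-component outputs with \(g(M_T)=\gcd(d,x_t:t\in T)\). If none of the three pairs gave gcd one, every \(B_i\) would have all parts at least \(3\), contradicting the length identity. The lcm condition is then automatic because \(m(B_j)\ge6\).
\end{itemize}
Your treatment of the case \(k=4\), \(h=0\) (switch the pair, or inspect small degrees) contains no such mechanism. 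Your assertion that gluing freedom always permits \(m(\nu)\ne2\) is likewise unproved.
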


For \(d=2\), the only nontrivial partition is \([2]\), whose gcd is \(2\).
Thus there is no Zieve-admissible datum of degree \(2\), and the theorem is
vacuous in that degree.  In the reduction below we may therefore assume
\(d\ge3\).

We next give a self-contained outline of the reduction.  The formal statements
and complete constructions appear in Sections~\ref{sec:preliminaries}--
\ref{sec:residual-zero}.

\begin{definition}[Reduction]
For a partition \(\lambda\vdash d\), let \(\Cl_\lambda\subset\mathfrak S_d\)
be the conjugacy class of permutations of cycle type \(\lambda\).  Suppose a
datum \(\Lambda\) contains two entries \(\lambda\) and \(\nu\), and suppose
that a partition \(M\vdash d\) satisfies
\[
  \Cl_M\subset \Cl_\lambda\Cl_\nu.
\]
Replacing \(\lambda,\nu\) by \(M\) gives a datum with one fewer branch, called
the \emph{reduced datum}; the partition \(M\) is the \emph{output} of the
reduction.
\end{definition}

The class-product inclusion is exactly what is needed to reverse a reduction.
Indeed, in a realization of the reduced datum, a permutation of type \(M\)
can be factored as \(\alpha\beta\), with \(\alpha\) and \(\beta\) of types
\(\lambda\) and \(\nu\).  Replacing that permutation by the two factors
preserves the product-one relation.  It also preserves transitivity because
the expanded monodromy group contains the reduced one.  Consequently,
realizability of the reduced datum implies realizability of \(\Lambda\).  This
is the expansion principle used after every reduction below.

\par\medskip
\noindent\textbf{The two reduction moves.}\par\nopagebreak
\smallskip
\begin{definition}[A1-small pairs]
For partitions \(\lambda,\nu\vdash d\), the pair
\((\lambda,\nu)\) is \emph{A1-small} if
\[
  b(\lambda)+b(\nu)\le d-1.
\]
This is precisely the numerical condition under which Move A1, defined below, applies.
\end{definition}

The first move is the defect-preserving product construction
of Edmonds, Kulkarni, and Stong \cite[Lemma~4.2]{EKS1984}, recorded by Baroni
and Petronio \cite[Proposition~2.2]{BaroniPetronio2024} and packaged by them as
Move A1 \cite[Proposition~2.5]{BaroniPetronio2024}.  If
\((\lambda,\nu)\) is A1-small, then one can choose an output \(M\) such that
\[
  \Cl_M\subset\Cl_\lambda\Cl_\nu,
  \qquad
  b(M)=b(\lambda)+b(\nu),
\]
or equivalently
\[
  \ell(M)=\ell(\lambda)+\ell(\nu)-d.
\]
Thus Move A1 preserves the total defect of a candidate datum and hence its
source genus.  What it does not automatically preserve is the branchwise
gcd-one condition or the lcm condition; choosing \(M\) so that
these two properties survive is the main issue in the residual cases.

Move A2 applies in the following large-defect situation.
Write a candidate datum as
\[
  (\lambda,\nu,\mu_3,\ldots,\mu_k).
\]
If
\[
  b(\lambda)+b(\nu)\ge d-1,
  \qquad
  \sum_{i=3}^k b(\mu_i)\ge d-1,
\]
then \(\lambda,\nu\) can be replaced by an output \(M\) so that the reduced
datum is again a candidate datum.  More precisely, Baroni and
Petronio \cite[Propositions~2.3, 2.4, and~2.6]{BaroniPetronio2024} give the
following form of Move A2:
\[
  M=
  \begin{cases}
    [d/2,d/2],
      & \lambda=\nu=[2^{d/2}],\\
    [d],
      & b(\lambda)+b(\nu)\equiv d-1\pmod2,\\
    [d-1,1],
      & b(\lambda)+b(\nu)\equiv d\pmod2
        \text{ and }(\lambda,\nu)\ne([2^{d/2}],[2^{d/2}]).
  \end{cases}
\]
The first alternative can occur only for even \(d\).  Unlike Move A1, Move A2
need not preserve the source genus; its purpose here is to create a branch of
a type for which realizability is already known.

\par\medskip
\noindent\textbf{Published realizability input.}\par\nopagebreak
\smallskip
Suppose that a reduction starts from a Zieve-admissible datum, replaces two
branch partitions by a new partition \(M\), and produces a candidate datum.
If \(M=[d]\), realizability of the reduced datum follows from Edmonds,
Kulkarni, and Stong
\cite[Proposition~5.2]{EKS1984}.  If \(M=[d-1,1]\), their
\cite[Proposition~5.3]{EKS1984} gives the same conclusion apart from two
exceptional families.  If \(\ell(M)=2\), Baroni and Petronio
\cite[Theorem~3.1]{BaroniPetronio2024} give realizability apart from the
exceptional families listed there.  In every exceptional case, at least one
branch partition inherited from the original datum has gcd greater than one.
This is impossible because the original datum is branchwise gcd-one.
Consequently, provided that the reduced candidate datum has at least three
branches, it is realizable whenever
\[
  M=[d],\qquad M=[d-1,1],\qquad\text{or}\qquad \ell(M)=2,
\]
and all branch partitions inherited from the original datum are gcd-one.  No
gcd-one condition is required of the newly created partition \(M\).

\par\medskip
\noindent\textbf{Proof strategy.}\par\nopagebreak
\smallskip
We use strong induction on the number \(k\) of branches, with the assumed
three-branch case as the base step.  The expansion principle shows that it is
enough at each stage to realize a suitable reduced datum.  The first division
is dictated entirely by the A1-small inequality.

\medskip
\noindent\textbf{Case 1: no A1-small pair exists.}
Every pair of branches then has defect sum at least \(d\).  Choose any two
branches \(\lambda,\nu\).  Since \(k\ge4\), two other branches remain, and
their defect sum is also at least \(d\).  In particular,
\[
  b(\lambda)+b(\nu)\ge d-1,
  \qquad
  \sum_{\mu_i\ne\lambda,\nu}b(\mu_i)\ge d-1,
\]
so Move A2 applies.  Its output \(M\) is \([d]\), \([d-1,1]\), or a partition
with two parts.  Every other branch of the reduced datum is inherited from the
original Zieve-admissible datum and hence is gcd-one.  The realizability results
described above apply, and expansion then realizes the original datum.

\medskip
\noindent\textbf{Case 2: an A1-small pair exists.}
Move A1 is available for every such pair.  If some A1-small pair has defect sum
\(d-1\) or \(d-2\), choose it.  Its output satisfies
\[
  \ell(M)=d-b(M)=d-b(\lambda)-b(\nu)\in\{1,2\},
\]
so \(M=[d]\) or \(M\) has two parts, and the corresponding published
realizability result applies to the reduced datum.
After disposing of these boundary cases, every A1-small pair under consideration
satisfies
\begin{equation}\label{eq:intro-residual-bound}
  b(\lambda)+b(\nu)\le d-3.
\end{equation}
For such a pair, Move A1 already preserves the candidate-datum equation and
the source genus.  It remains to arrange that the output has gcd one and that
the reduced datum satisfies the lcm condition.  To formulate the remaining
case distinction, note that a part of size \(1\) in a branch partition
corresponds to a fixed point of a permutation with that cycle type.  We call a
branch partition \emph{fixed-point} if it contains a part \(1\), and
\emph{fixed-point-free} otherwise.  The construction of the output is then
determined by the fixed-point pattern of \((\lambda,\nu)\).

If \(B\vdash d\) is fixed-point-free and gcd-one, then
\[
  b(B)\ge\left\lfloor\frac d2\right\rfloor+1.
\]
Indeed, fixed-point-freeness gives \(\ell(B)\le d/2\), and equality for even
\(d\) would force \(B=[2^{d/2}]\), contrary to gcd one.  Two fixed-point-free
gcd-one branches therefore have defect sum greater than \(d-1\), so they can
never form an A1-small pair.  There are only two remaining configurations.

\smallskip
\noindent\textbf{Subcase 2a: both partitions have fixed points.}
Here the freedom in Move A1 is described by a bipartite incidence graph, defined below.  If
\(\alpha\) and \(\beta\) have cycle types \(\lambda\) and \(\nu\), take the
cycles of \(\alpha\) as black vertices, the cycles of \(\beta\) as white
vertices, and for each letter \(x\in\{1,\ldots,d\}\) draw an edge joining the
two cycles that contain \(x\).  Fixed points correspond to vertices of degree
one.  For every such pair of permutations,
\[
  b(\alpha\beta)\le b(\alpha)+b(\beta),
\]
and equality holds exactly when the incidence graph is a forest.  In that
case, each tree component supports one cycle of \(\alpha\beta\), whose length
is the number of edges in the component.  Thus constructing an output of Move
A1 is equivalent to constructing an incidence forest, and the component sizes
directly control the gcd and lcm of the output partition.

The basic construction reserves one fixed point from each partition and joins
them by an isolated edge, which gives a part of size \(1\) in the output.  The
remaining fixed-point leaves can then be distributed among the other tree
components to vary their sizes without changing the defect.  Under
\eqref{eq:intro-residual-bound}, this construction, with a local modification
in the sparse cases, gives component sizes of gcd one.  For any prescribed
\(t\in\{2,3,4,6\}\), it can moreover be chosen so that the output lcm is not
\(t\).  This is precisely what is needed for the lcm condition
to hold.  A reduced datum with at least five branches
automatically satisfies the lcm condition, since every
nontrivial branch contributes at least \(1/2\) to the sum in
that condition.  With four branches, equality can occur only
when all four lcms are \(2\).  With three branches, the only
triples of lcms for which the lcm condition fails are
\[
  (3,3,3),\qquad(2,4,4),\qquad(2,3,6).
\]
For the fixed lcms of the branch partitions inherited by the reduced datum,
there is therefore at most one forbidden value of \(m(M)\).  Avoiding that
value, if it exists, makes the reduced datum Zieve-admissible.  This gives the
induction step when both members of the A1-small pair have fixed points.

\smallskip
\noindent\textbf{Subcase 2b: exactly one partition has a fixed point.}
Write the pair as \((F,B)\), where \(F\) has fixed points and \(B\) is
fixed-point-free.  Assume that none of the preceding reductions settles the
induction step.  Then no A1-small pair has defect sum \(d-1\) or \(d-2\), and
no A1-small pair consists of two fixed-point branches.  Since two
fixed-point-free gcd-one branches cannot form an A1-small pair, every
A1-small pair now has exactly one fixed-point branch and satisfies
\eqref{eq:intro-residual-bound}.  We call such a pair a \emph{mixed A1-small
pair}, and call a datum in this remaining case a \emph{mixed residual datum}.
Thus, in a mixed residual datum, every A1-small pair is mixed and has defect
sum at most \(d-3\), and none of the preceding reductions has already produced
a realizable or Zieve-admissible reduced datum.  The argument now splits
according to the source genus.

Suppose first that the source genus is \(h\ge1\).  Write
\[
  F=[c_1,\ldots,c_s,1^f],\qquad
  a=b(F)=\sum_{u=1}^s(c_u-1),
\]
and write \(B=[x_1,\ldots,x_n]\), where \(n=\ell(B)\).  Since
\(b(B)=d-n\), inequality \eqref{eq:intro-residual-bound} gives
\begin{equation}\label{eq:intro-length-room}
  n\ge a+3.
\end{equation}
For positive source genus, we use a different product
construction.  Unlike Move A1, it lowers the total defect by two.  One chooses
\(a\) cycles of a permutation of type
\(B\) and uses the nontrivial cycles of a permutation of type \(F\) to join
them while splitting off one fixed point.  The resulting product type \(M\)
satisfies
\begin{equation}\label{eq:intro-genus-lowering}
  \Cl_M\subset\Cl_F\Cl_B,
  \qquad g(M)=1,
  \qquad b(M)=b(F)+b(B)-2.
\end{equation}
The fixed point in \(M\) gives \(g(M)=1\), and the loss of two in total defect
lowers the source genus from \(h\) to \(h-1\).  We now explain why the new lcm
can be chosen to ensure that the reduced datum satisfies the
lcm condition.  Since \(B\) is
fixed-point-free, \(n\le d/2\), and \eqref{eq:intro-length-room} gives
\(a\le d/2-3\).  If another fixed-point branch \(G\) had \(m(G)=2\), then
\(b(G)\le(d-1)/2\), so \((F,G)\) would be A1-small, contrary to the definition
of a mixed residual datum.  On the other hand, every fixed-point-free gcd-one
branch has lcm at least \(6\).  Thus a reduced datum with four branches cannot
have all four lcms equal to \(2\), while one with at least five branches
automatically satisfies the lcm condition.  If the reduced
datum is a triple, the lcm condition can fail only when the
two inherited lcms are \((3,3)\),
which would require \(m(M)=3\), or when they are \((4,4)\) or \((3,6)\), each
of which would require \(m(M)=2\).  The room supplied by
\eqref{eq:intro-length-room} allows the construction to avoid \(3\) in the
first case and \(2\) in the other two cases.  Hence the product in
\eqref{eq:intro-genus-lowering} can be chosen so that the reduced datum is
Zieve-admissible.  It has one fewer branch, so strong induction applies.

It remains to treat source genus zero, where losing two units of total defect
would produce genus \(-1\) and the preceding construction cannot be used.  We
first explain the structure forced by the defect bounds.  Choose a mixed pair
\((F,B)\), and put \(a=b(F)\) and \(b=b(B)\).  The A1-small inequality and the
fixed-point-free defect bound give
\[
  a+b\le d-1,
  \qquad
  b\ge\left\lfloor\frac d2\right\rfloor+1,
  \qquad
  b-a>0.
\]
If there were another fixed-point branch \(G\), then the absence of
fixed/fixed A1-small pairs would give \(b(G)\ge d-a\).  A second
fixed-point-free branch \(B'\) would then make the total defect greater than
\(2d-2\), so \(B\) would have to be the only fixed-point-free branch.  Since
the original datum has at least four branches, there would then be two further
fixed-point branches \(G,H\), and
\[
  a+b+b(G)+b(H)\ge 2d+(b-a)>2d-2,
\]
again contradicting the genus-zero defect sum.  Hence \(F\) is the unique
fixed-point branch.  If there were at least four fixed-point-free branches,
their defect bound would again make the total defect greater than \(2d-2\).
There are at least three because the original datum has at least four
branches.  Consequently a mixed residual datum has exactly one fixed-point
branch and exactly three fixed-point-free branches:
\[
  \Lambda=(F,B_1,B_2,B_3).
\]
Put \(n_i=\ell(B_i)\).  The Riemann--Hurwitz equation becomes
\begin{equation}\label{eq:intro-length-sum}
  n_1+n_2+n_3=d+a+2.
\end{equation}
For distinct \(j,k\), the fixed-point-free defect bound gives
\(n_j+n_k\le d-1\).  If some \(n_i\le a\), this inequality would contradict
\eqref{eq:intro-length-sum}.  Thus \(n_i\ge a+1\), or equivalently,
the pair \((F,B_i)\) is A1-small for each \(i\).  Since the datum is mixed residual,
\eqref{eq:intro-residual-bound} strengthens this to \(n_i\ge a+3\).

For \(B_i=[x_1,\ldots,x_{n_i}]\), choose \(a+1\) of its cycles and use the
cycles of \(F\) to join exactly those cycles into a single component.  If
\(T\) is the complementary set of indices, then \(|T|=n_i-a-1\ge2\), and the
Move A1 output has the explicit form
\[
  M_T=\left[d-\sum_{t\in T}x_t,\;x_t\ (t\in T)\right].
\]
In particular,
\begin{equation}\label{eq:intro-gcd-complement}
  g(M_T)=\gcd\bigl(d,\;x_t\ (t\in T)\bigr).
\end{equation}
If no choice of \(T\) for any of the three branches gave gcd one, then none of
the \(B_i\) could contain a part of size \(2\): for odd \(d\), the part \(2\)
alone is coprime to \(d\), while for even \(d\) it can be paired with an odd
part, which exists because \(g(B_i)=1\).
In either case, these one or two indices can be enlarged to a
permissible set \(T\) of the required cardinality; once their gcd with \(d\) is
one, adjoining further parts does not change it.  Thus every part of every
\(B_i\) would be at least \(3\), giving \(n_i\le d/3\) for all \(i\).  This contradicts
\eqref{eq:intro-length-sum}, whose right-hand side is greater than \(d\).
Therefore some pair \((F,B_i)\) has a gcd-one Move A1 output.

The two branch partitions inherited by the reduced triple are fixed-point-free
and gcd-one, so each has lcm at least \(6\).  Since the new output is
nontrivial, its lcm is at least \(2\), and hence
\[
  \frac1{m(M_T)}+\frac1{m(B_j)}+\frac1{m(B_k)}
  \le\frac12+\frac16+\frac16<1.
\]
Thus the reduced triple satisfies the lcm condition and is
branchwise gcd-one.  The
assumed three-point case realizes it, and expansion realizes the original
quadruple.  This completes the last branch of the induction.

The paper is organized as follows.  Section~\ref{sec:preliminaries} records the
reduction moves, the expansion principle, the published realizability results
for reduced data whose new branch is \([d]\), \([d-1,1]\), or has two parts,
the corresponding exceptional cases, and the incidence-forest formalism.
Section~\ref{sec:fixed-fixed} constructs suitable Move A1 outputs when both
selected branches have fixed points.  Sections~\ref{sec:residual-positive} and
\ref{sec:residual-zero} treat mixed residual data of positive and zero source
genus, respectively.  The final section assembles these reductions into the
proof of Theorem~\ref{thm:main}.

\section{Preliminaries and reduction tools}
\label{sec:preliminaries}

This section collects the notation and reduction tools used throughout the
proof.  We first record elementary bounds for branch partitions and then state
the expansion principle and Moves A1 and A2 in the precise forms needed below.
We next recall the realizability theorems for reduced data
whose new partition is \([d]\), \([d-1,1]\), or has length two, together with
the exclusion of their published exceptional families under the branchwise
gcd-one hypothesis.  The final subsection develops the incidence-forest
formalism used to construct products realizing Move A1.

\subsection{Branch data and elementary bounds}
\label{subsec:branch-data}

We use throughout the notation
\(\ell(\lambda)\), \(b(\lambda)\), \(g(\lambda)\), \(m(\lambda)\), and
\(\Cl_\lambda\) introduced in the Introduction.

We record two elementary consequences of these definitions for later use.

\begin{lemma}\label{lem:fpf-lcm}
If \(B\vdash d\) is fixed-point-free and gcd-one, then
\[
  m(B)\ge6.
\]
\end{lemma}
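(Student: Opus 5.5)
The plan is a short argument by contradiction on the prime factorization of \(m(B)\). Fixed-point-freeness means every part of \(B\) is at least \(2\). Gcd one means no prime divides all parts of \(B\).

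First, I will show that \(B\) has at least two distinct parts. If all parts were equal to some \(c\ge2\), then \(g(B)=c>1\), contradicting gcd one. So \(B\) has distinct parts \(x\ne y\), both at least \(2\).

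Second, I will show that \(m(B)\) has at least two distinct prime divisors. Suppose instead that \(m(B)=p^e\) for a single prime \(p\) (note \(m(B)>1\) since \(B\) is nontrivial). Every part of \(B\) divides \(m(B)\), so every part is a power of \(p\). Each part is at least \(2\), so each is a positive power of \(p\), and then \(p\) divides every part. This gives \(g(B)\ge p>1\), a contradiction.

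Finally, \(m(B)\) is divisible by two distinct primes \(p<q\), so \(m(B)\ge pq\ge 2\cdot3=6\). This completes the argument. The only step needing care is the second one: it is where fixed-point-freeness is used, since a part equal to \(1\) would be a \(p^0\) and would not be divisible by \(p\). With that observation in hand the argument is routine, and I expect no real obstacle. The bound is sharp, for example for \(B=[2,3]\) with \(d=5\).
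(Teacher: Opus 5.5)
Your proof is correct and uses the same idea as the paper, which checks \(m(B)\in\{2,3,4,5\}\) one value at a time: each is a prime power, so once parts equal to \(1\) are excluded all parts share that prime. You state this uniformly as ``\(m(B)\) is not a prime power,'' which is slightly more general, and your first step (that \(B\) has two distinct parts) is never used and can be dropped.
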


\begin{proof}
If \(m(B)=2\), all parts are \(2\), so \(g(B)=2\).  If \(m(B)=3\), all parts
are \(3\), so \(g(B)=3\).  If \(m(B)=4\), all parts are \(2\) or \(4\), so
\(g(B)\ge2\).  If \(m(B)=5\), all parts are \(5\), so \(g(B)=5\).  Therefore
no fixed-point-free gcd-one partition has lcm \(<6\).
\end{proof}

\begin{lemma}\label{lem:fpf-defect}
If \(B\vdash d\) is fixed-point-free and gcd-one, then
\[
  b(B)\ge \left\lfloor\frac d2\right\rfloor+1.
\]
\end{lemma}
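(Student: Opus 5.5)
The plan is to bound the length \(\ell(B)\) from above and then use \(b(B)=d-\ell(B)\). Since \(B\) is fixed-point-free, every part of \(B\) is at least \(2\). Hence
\[
  d=\sum(\text{parts of }B)\ge 2\ell(B),
\]
so \(\ell(B)\le d/2\), and since \(\ell(B)\) is an integer, \(\ell(B)\le\lfloor d/2\rfloor\).

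Next, split according to the parity of \(d\).

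\emph{Odd \(d\).} Here \(\lfloor d/2\rfloor=(d-1)/2\). Then
\[
  b(B)\ge d-\frac{d-1}{2}=\frac{d+1}{2}=\left\lfloor\frac d2\right\rfloor+1,
\]
which is the claim.

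\emph{Even \(d\).} Suppose, for contradiction, that \(\ell(B)=d/2\). Equality then holds in \(d\ge2\ell(B)\), which forces every part to equal \(2\), that is, \(B=[2^{d/2}]\). But then \(g(B)=2\), contradicting the gcd-one hypothesis. Therefore \(\ell(B)\le d/2-1\), and
\[
  b(B)\ge d-\Bigl(\frac d2-1\Bigr)=\frac d2+1=\left\lfloor\frac d2\right\rfloor+1.
\]

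No step is a real obstacle. The only point needing care is the equality case \(B=[2^{d/2}]\) for even \(d\), which is exactly where the gcd-one hypothesis is used. This is the same argument sketched in the Introduction before Subcase~2a.
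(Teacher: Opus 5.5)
Your proof is correct and matches the paper's argument: you bound \(\ell(B)\le d/2\) using that every part is at least \(2\), handle odd \(d\) directly, and for even \(d\) exclude \(\ell(B)=d/2\) because it forces \(B=[2^{d/2}]\) with gcd \(2\).
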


\begin{proof}
Since \(B\) is fixed-point-free, every part is at least \(2\), so
\(\ell(B)\le d/2\).  If \(d\) is odd, this gives
\(\ell(B)\le(d-1)/2\).  If \(d\) is even, equality
\(\ell(B)=d/2\) would force \(B=[2^{d/2}]\), whose gcd is \(2\).  Hence in the
even case \(\ell(B)\le d/2-1\).  Both cases give the stated defect bound.
\end{proof}

We use the notions of candidate datum, source genus, Zieve-admissibility, and
realizability introduced before Theorem~\ref{thm:main}.  For later use, if
\(D=(\lambda_1,\ldots,\lambda_k)\), set
\[
  v(D)=\sum_{i=1}^k b(\lambda_i).
\]
Thus a datum \(D\) whose entries are nontrivial is a candidate
datum of source genus \(h\) precisely when \(v(D)=2(d+h-1)\).
In particular, following the convention of Baroni and
Petronio \cite{BaroniPetronio2024}, every branch partition in a candidate datum
is nontrivial.

\subsection{Reduction moves and expansion}
\label{subsec:reduction-tools}

We record the expansion principle and the algebraic reduction moves A1 and A2
in the forms used below, with references to the original sources.

We use the term \emph{A1-small} as defined in the
Introduction.

\begin{proposition}[Expansion]\label{prop:expansion}
Let \(\Lambda'\) be obtained from
\(\Lambda=(\lambda_1,\ldots,\lambda_k)\) by replacing two entries
\(\lambda_i,\lambda_j\) by a partition \(\mu\), and suppose
\[
  \Cl_\mu\subset \Cl_{\lambda_i}\Cl_{\lambda_j}.
\]
If \(\Lambda'\) is realizable, then \(\Lambda\) is realizable.
\end{proposition}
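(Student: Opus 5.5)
The plan is to take a realization of \(\Lambda'\) and split its new branch into two factors. Since realizability does not depend on the chosen indexing, we may order the entries of \(\Lambda'\) so that \(\mu\) comes last. Suppose \(\Lambda'\) is realized by permutations \(\tau_1,\ldots,\tau_{k-2},\gamma\in\mathfrak S_d\). Here the \(\tau_r\) have the cycle types of the entries of \(\Lambda\) other than \(\lambda_i,\lambda_j\), and \(\gamma\in\Cl_\mu\). These permutations satisfy
\[
  \tau_1\cdots\tau_{k-2}\gamma=1,
\]
and \(G'=\langle\tau_1,\ldots,\tau_{k-2},\gamma\rangle\) is transitive on \(\{1,\ldots,d\}\).

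By the hypothesis \(\Cl_\mu\subset\Cl_{\lambda_i}\Cl_{\lambda_j}\), there exist \(\alpha\in\Cl_{\lambda_i}\) and \(\beta\in\Cl_{\lambda_j}\) with \(\gamma=\alpha\beta\). Indeed, \(\Cl_{\lambda_i}\Cl_{\lambda_j}\) means the set of all such products, and \(\gamma\) belongs to it. Consider the \(k\)-tuple
\[
  (\tau_1,\ldots,\tau_{k-2},\alpha,\beta).
\]
Its cycle types are exactly the entries of \(\Lambda\) in some indexing. Its product is
\[
  \tau_1\cdots\tau_{k-2}\alpha\beta=\tau_1\cdots\tau_{k-2}\gamma=1.
\]

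For transitivity, note that \(\gamma=\alpha\beta\) lies in \(G=\langle\tau_1,\ldots,\tau_{k-2},\alpha,\beta\rangle\). Hence \(G'\subset G\), so \(G\) is transitive as well. Therefore \(\Lambda\) is realizable.

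There is no real obstacle in this argument. The only points needing care are the two reorderings. The first is placing \(\mu\) last in \(\Lambda'\), which the definition of realizability allows through the Hurwitz-move remark. The second is that the resulting indexing of \(\Lambda\) is an admissible indexing of the multiset \(\Lambda\), which also holds by definition.
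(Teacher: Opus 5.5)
Your proof is correct and follows the paper's argument: factor the permutation of type \(\mu\) as \(\alpha\beta\) using the class-product inclusion, and replace it by these two adjacent factors so the product stays \(1\). The new generated group contains \(\alpha\beta\), hence the old transitive group. The only cosmetic difference is that you move \(\mu\) to the last position, whereas the paper splits it in place at an arbitrary position \(r\), which avoids relying on the indexing-independence remark.
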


\begin{proof}
Choose an indexing of \(\Lambda'\) in which the copy of \(\mu\) created by the
replacement occurs in position \(r\).
Since \(\Lambda'\) is realizable, there are permutations
\[
  (\sigma_1,\ldots,\sigma_{r-1},\tau,
    \sigma_{r+1},\ldots,\sigma_{k-1})
\]
with the cycle types prescribed by this indexing of \(\Lambda'\).  Their
product is the identity:
\[
  \sigma_1\cdots\sigma_{r-1}\tau
  \sigma_{r+1}\cdots\sigma_{k-1}=1
\]
The subgroup
\[
  \langle\sigma_1,\ldots,\sigma_{r-1},\tau,
    \sigma_{r+1},\ldots,\sigma_{k-1}\rangle
\]
acts transitively on \(\{1,\ldots,d\}\).  In particular,
\(\tau\in\Cl_\mu\).
By the class-product inclusion, write
\[
  \tau=\alpha\beta,\qquad
  \alpha\in\Cl_{\lambda_i},\quad \beta\in\Cl_{\lambda_j}.
\]
Replacing the factor \(\tau\) by the adjacent factors
\(\alpha,\beta\) leaves the product equal to the identity, and the resulting sequence of cycle types
is an indexed representative of \(\Lambda\).  The new generated group contains
\(\tau=\alpha\beta\) and all the other factors of the chosen realization of
\(\Lambda'\).  It therefore contains the transitive subgroup
\[
  \langle\sigma_1,\ldots,\sigma_{r-1},\tau,
    \sigma_{r+1},\ldots,\sigma_{k-1}\rangle
\]
and is itself transitive.
\end{proof}

The existence assertion in the next proposition is the defect-preserving
product statement of Edmonds, Kulkarni, and Stong \cite[Lemma~4.2]{EKS1984}, also
recorded by Baroni and Petronio
\cite[Proposition~2.2]{BaroniPetronio2024}.  Baroni and Petronio package the
resulting reduction as Move A1
\cite[Proposition~2.5]{BaroniPetronio2024}.

\begin{proposition}[Baroni--Petronio Move A1]\label{prop:A1}
If \(\lambda,\nu\vdash d\) satisfy
\[
  b(\lambda)+b(\nu)\le d-1,
\]
then there exists a partition \(\mu\vdash d\) such that
\[
  \Cl_\mu\subset \Cl_\lambda\Cl_\nu,
  \qquad
  b(\mu)=b(\lambda)+b(\nu).
\]
\end{proposition}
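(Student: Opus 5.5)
We will prove Proposition~\ref{prop:A1} by exhibiting permutations \(\alpha\in\Cl_\lambda\) and \(\beta\in\Cl_\nu\) whose incidence graph, as described in the Introduction, is a forest. We then let \(\mu\) be the cycle type of \(\alpha\beta\). Because the class product \(\Cl_\lambda\Cl_\nu\) is a union of conjugacy classes, producing one such product already gives \(\Cl_\mu\subset\Cl_\lambda\Cl_\nu\).

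\textbf{Step 1: the defect identity.} We first establish the basic fact: for any \(\alpha,\beta\), \(b(\alpha\beta)\le b(\alpha)+b(\beta)\), with equality when the bipartite incidence graph \(\Gamma(\alpha,\beta)\) is a forest. The graph \(\Gamma\) has \(\ell(\lambda)+\ell(\nu)\) vertices and \(d\) edges. We build the product up one tree at a time, multiplying by transpositions. A cycle of length \(c\) is a product of \(c-1\) transpositions, so \(\alpha\beta\) is a product of \(b(\alpha)+b(\beta)\) transpositions. Each transposition changes the number of cycles by \(\pm1\), and it merges two cycles exactly when it joins two previously separate orbits. If \(\Gamma\) is a forest, each transposition merges, so \(\ell(\alpha\beta)=d-b(\alpha)-b(\beta)\). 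This is the only place the forest property enters. A cleaner route is to take \(\alpha\beta\) restricted to the letter set of each tree component \(K\): the group \(\langle\alpha|_K,\beta|_K\rangle\) is transitive on the \(|E(K)|\) letters, and a genus-zero Riemann--Hurwitz count over the three-point datum \((\alpha|_K,\beta|_K,(\alpha\beta)^{-1}|_K)\) forces \((\alpha\beta)|_K\) to be a single cycle. Either argument suffices.

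\textbf{Step 2: realizing a forest.} Write \(\lambda=[p_1,\dots,p_r]\) and \(\nu=[q_1,\dots,q_s]\), so that \(r+s\ge d+1\) by hypothesis. We must find a bipartite forest with black degrees \(p_i\), white degrees \(q_j\), and \(d\) edges in total. A forest on \(r+s\) vertices with \(d\) edges has \(r+s-d\ge1\) components, so the degree sums are consistent. We construct it greedily. We order the black degrees decreasingly and build a caterpillar-like tree. We take white vertices of large degree as the spine and hang black vertices on them, then attach leftover degree-one vertices (fixed points) as leaves or as isolated edges. Equivalently, we use the criterion that a bipartite degree sequence with \(\sum p_i=\sum q_j=d\) and \(r+s\ge d+1\) is realized by a forest. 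We prove this by induction on \(d\). If \(r+s\ge d+2\), then both \(\lambda\) and \(\nu\) contain a part \(1\), and an isolated edge joining them reduces to the data \(\lambda\setminus1\), \(\nu\setminus1\) of degree \(d-1\) with the inequality preserved. If \(r+s=d+1\), we need a spanning tree. Some part equals \(1\), say \(p_r=1\), since otherwise \(r\le d/2\) and \(s\le d/2\). We attach this black leaf to a white vertex of degree \(q_j\ge2\), or to any white vertex if all \(q_j=1\). We then delete the leaf and decrease \(q_j\) by one, which preserves \(r+s=d+1\) in degree \(d-1\).

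\textbf{Step 3: labeling.} From a forest with \(d\) edges, we label the edges by \(1,\dots,d\). At each vertex, we choose a cyclic order on its incident edges. This defines \(\alpha\), whose cycles are the black vertices, and \(\beta\), whose cycles are the white vertices, of the required types. By Step 1, \(b(\alpha\beta)=b(\lambda)+b(\nu)\).

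The main obstacle is the spanning-tree case of Step 2. There we must check that the inductive deletion never strands a vertex: a white vertex whose degree has dropped to zero while edges are still required, or a reduced datum that violates \(r+s=d+1\). We will handle this by always choosing the leaf from the side with more parts of size \(1\), and attaching it to the largest remaining degree on the other side.
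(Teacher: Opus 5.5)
\textbf{Gap in Step~2.} The case \(r+s\ge d+2\) rests on a false claim: it does not follow that both \(\lambda\) and \(\nu\) contain a part \(1\). Counting only shows that at least one of them does, since if neither did, \(r,s\le d/2\) would give \(r+s\le d\). Take \(d=6\), \(\lambda=[2,2,2]\), \(\nu=[2,1^4]\). Then \(r+s=8=d+2\) and \(b(\lambda)+b(\nu)=4\le5\), yet \(\lambda\) has no part \(1\). The isolated-edge reduction is unavailable, so your induction says nothing about this datum. A forest does exist: blacks \(\{2,2\}\) against whites \(\{2,1,1\}\), plus a black star \(\{2\}\) against \(\{1,1\}\), giving \(\mu=[4,2]\).

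The repair is to use here the same leaf step you use for spanning trees. Attach a white leaf to any black vertex (all have degree \(\ge2\)) and lower that degree by one. This deletes one vertex and one edge, so \(r+s-d\) is unchanged. That step is available whenever one side has a part \(1\) and the other a part \(\ge2\). Under \(r+s\ge d+1\), the only datum where it is unavailable is \(\lambda=\nu=[1^d]\), a perfect matching. So a single induction with the invariant \(r+s-d\ge1\) suffices, and the isolated edge is needed only in that trivial case.

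The spanning-tree case you flagged as the main obstacle is already correct as written. If a black leaf exists and \(d\ge2\), some white degree is \(\ge2\): otherwise \(\nu=[1^d]\) forces \(r=1\) and \(\lambda=[d]\), which has no leaf.

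\textbf{Smaller points in Step~1.} A transposition can merge two cycles without joining separate orbits, so only the implication you actually use is true. You should also say why, when \(\Gamma(\alpha,\beta)\) is a forest, every transposition joins separate orbits. The graph on the letters whose edges are your \(b(\alpha)+b(\beta)\) transpositions has the same components as \(\Gamma(\alpha,\beta)\). Hence it is a forest exactly when \(\Gamma(\alpha,\beta)\) is.

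\textbf{Comparison with the paper.} The paper gives no proof of this proposition. It cites Edmonds--Kulkarni--Stong and Baroni--Petronio, and then derives the existence of incidence forests (Corollary~\ref{cor:forest-completion}) from it. Once repaired, your argument runs in the opposite direction and would make Section~\ref{subsec:incidence-forests} self-contained. Your Step~1 corresponds to Proposition~\ref{prop:incidence-forest}, your spanning-tree induction to the proof of Lemma~\ref{lem:balanced-tree-component}, and your Step~3 to Corollary~\ref{cor:forest-assembly}.
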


To apply this proposition to a reduction, let
\[
  D=(\lambda,\nu,\mu_3,\ldots,\mu_k)
\]
be a candidate datum of degree \(d\), and suppose that
\[
  b(\lambda)+b(\nu)\le d-1.
\]
Choose \(\mu\) as in Proposition~\ref{prop:A1}.  Replacing \(\lambda,\nu\) by
\(\mu\) gives the reduced datum
\[
  D'=(\mu,\mu_3,\ldots,\mu_k);
\]
this reduction is Move A1.  Since
\(b(\mu)=b(\lambda)+b(\nu)\), the partition \(\mu\) is nontrivial and
\(v(D')=v(D)\).  Hence \(D'\) is again a candidate datum and has the same
source genus as \(D\).

The next proposition is Move A2 of Baroni and Petronio
\cite[Proposition~2.6]{BaroniPetronio2024}, based on the product
constructions in \cite[Propositions~2.3 and~2.4]{BaroniPetronio2024}.

\begin{proposition}[Baroni--Petronio Move A2]\label{prop:A2}
Let
\[
  D=(\lambda,\nu,\mu_3,\ldots,\mu_k)
\]
be a candidate datum of degree \(d\ge3\), and suppose that
\[
  b(\lambda)+b(\nu)\ge d-1
  \qquad\text{and}\qquad
  \sum_{i=3}^k b(\mu_i)\ge d-1.
\]
Then there is a partition \(\pi\vdash d\) such that
\[
  \Cl_\pi\subset\Cl_\lambda\Cl_\nu
\]
and the reduced datum
\[
  D'=(\pi,\mu_3,\ldots,\mu_k)
\]
is again a candidate datum.  More precisely,
\[
  \pi=
  \begin{cases}
    [d/2,d/2],
      & \lambda=\nu=[2^{d/2}],\\
    [d],
      & b(\lambda)+b(\nu)\equiv d-1\pmod 2,\\
    [d-1,1],
      & b(\lambda)+b(\nu)\equiv d\pmod 2
        \text{ and }(\lambda,\nu)\ne([2^{d/2}],[2^{d/2}]).
  \end{cases}
\]
The first case occurs only when \(d\) is even.
\end{proposition}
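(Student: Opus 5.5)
The plan is to split the statement into two independent parts: the class-product inclusion \(\Cl_\pi\subset\Cl_\lambda\Cl_\nu\) for the specified \(\pi\), and the verification that \(D'\) is again a candidate datum. For the inclusion I would rely on the product constructions of Baroni and Petronio \cite[Propositions~2.3 and~2.4]{BaroniPetronio2024}, which are stated under exactly the hypothesis \(b(\lambda)+b(\nu)\ge d-1\). In the form needed here they say the following.

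\begin{enumerate}[label=(\roman*)]
\item If \(b(\lambda)+b(\nu)\equiv d-1\pmod 2\), then a \(d\)-cycle is a product of permutations of types \(\lambda\) and \(\nu\).
\item If \(b(\lambda)+b(\nu)\equiv d\pmod 2\) and \((\lambda,\nu)\ne([2^{d/2}],[2^{d/2}])\), then the same holds for type \([d-1,1]\).
\item In the excluded pair \(\lambda=\nu=[2^{d/2}]\), which forces \(d\) even, the product of two fixed-point-free involutions can be arranged to have type \([d/2,d/2]\).
\end{enumerate}

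Case (iii) is elementary and I would check it directly. Take \(\alpha=(1\,2)(3\,4)\cdots\) and \(\beta=(2\,3)(4\,5)\cdots(d\,1)\). Their incidence graph is a single \(d\)-cycle, so \(\alpha\beta\) splits into two cycles of length \(d/2\), one on the odd and one on the even letters.

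Since all three classes of \(\pi\) are closed under conjugation, exhibiting one factorization of one element of \(\Cl_\pi\) gives the full inclusion. This proves the class-product statement.

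The parity bookkeeping explains why the three cases are the right ones. A permutation of type \(\rho\) has sign \((-1)^{b(\rho)}\), so any product type \(\pi\) must satisfy \(b(\pi)\equiv b(\lambda)+b(\nu)\pmod 2\). The three cases are consistent with this:
\[
b([d])=d-1,\qquad b([d-1,1])=d-2\equiv d,\qquad b([d/2,d/2])=d-2,
\]
and when \(\lambda=\nu=[2^{d/2}]\) we have \(b(\lambda)+b(\nu)=d\). In particular \(v(D')\equiv v(D)\equiv 0\pmod 2\).

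Next I would check the candidate-datum condition. Since \(d\ge3\), each possible \(\pi\) is nontrivial, and the \(\mu_i\) are nontrivial by hypothesis. In every case \(b(\pi)\ge d-2\), so
\[
v(D')=b(\pi)+\sum_{i\ge3}b(\mu_i)\ge (d-2)+(d-1)=2d-3.
\]
Because \(v(D')\) is even, this improves to \(v(D')\ge 2d-2\). Writing \(v(D')=2(d+h'-1)\) therefore gives an integer \(h'\ge0\), so \(D'\) is a candidate datum. Note also that \(\sum_{i\ge3}b(\mu_i)\ge d-1\ge2\) forces \(k\ge3\), so \(D'\) has at least two entries and the statement is not vacuous.

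The main obstacle is cases (i) and (ii) in full generality, where \(b(\lambda)+b(\nu)\) may be much larger than \(d-1\), so the incidence graph cannot be a forest. If I had to argue this myself rather than cite it, I would induct on \(b(\lambda)+b(\nu)\). The base case \(b(\lambda)+b(\nu)\in\{d-1,d\}\) is handled by an incidence forest with one or two components, using Proposition~\ref{prop:A1}-type constructions. For the inductive step, I would split a cycle of \(\lambda\) or \(\nu\) to lower its defect by one and then merge cycles in the product by adding a single crossing. The delicate point, and the source of the exception in (iii), is keeping the product a single long cycle (or a long cycle plus a fixed point) when both factors are fixed-point-free involutions. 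For that reason I would cite \cite[Propositions~2.3,~2.4,~2.6]{BaroniPetronio2024} for (i) and (ii) rather than reprove them.
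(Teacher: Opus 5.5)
Your proposal is correct and takes essentially the same route as the paper. The paper gives no proof of its own: it cites Baroni--Petronio \cite[Propositions~2.3, 2.4, and~2.6]{BaroniPetronio2024}, then notes that \(b(\pi)\ge d-2\), \(\sum_{i\ge3}b(\mu_i)\ge d-1\) and evenness force \(v(D')\ge 2d-2\). Your explicit factorization for \(\lambda=\nu=[2^{d/2}]\) and your sign argument \(b(\pi)\equiv b(\lambda)+b(\nu)\pmod 2\) are small self-contained additions: the sign argument gives the evenness of \(v(D')\) directly, whereas the paper takes evenness from the cited candidate-datum conclusion.
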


For later use, we record the following defect consequence of
this statement.  Since
\(b(\pi)\ge d-2\) and the branches \(\mu_3,\ldots,\mu_k\) inherited from
\(D\) have total defect at least \(d-1\), the total defect of \(D'\) is at
least \(2d-3\).  It is even because \(D'\) is a candidate datum, and hence it
is at least \(2d-2\).
\subsection{Realizability results for specified branches}
\label{subsec:special-output-tools}

\begin{proposition}[Published realizability results for a specified branch]
\label{prop:published-special-reduced-data}
Let
\[
  D=(\lambda_1,\ldots,\lambda_r)
\]
be a degree-\(d\) datum whose entries are nontrivial, and regard \(\lambda_r\)
as a specified branch.  Let
\[
  \mathcal B(D)=\{\!\{\lambda_1,\ldots,\lambda_r\}\!\},
\]
denote the multiset of branch partitions of \(D\).
Then the following statements hold.
\begin{enumerate}[label=(\alph*)]
\item If
\[
  \lambda_r=[d],
  \qquad
  v(D)\equiv0\pmod2,
  \qquad
  v(D)\ge2d-2,
\]
then \(D\) is realizable by Edmonds, Kulkarni, and Stong
\cite[Proposition~5.2]{EKS1984}.

\item If
\[
  \lambda_r=[d-1,1],
  \qquad
  v(D)\equiv0\pmod2,
  \qquad
  v(D)\ge2d-2,
\]
then \(D\) is realizable by Edmonds, Kulkarni, and Stong
\cite[Proposition~5.3]{EKS1984}, unless \(\mathcal B(D)\) is one of the
following two multisets:
\[
  \{\!\{
    \underbrace{[2,2],\ldots,[2,2]}_{r-1\text{ copies}},[3,1]
  \}\!\}
  \qquad(d=4,\ r\ge3)
\]
or
\[
  \{\!\{[2^{d/2}],[2^{d/2}],[d-1,1]\}\!\}
  \qquad(d\ \text{even},\ r=3).
\]

\item If \(D\) is a candidate datum and
\[
  r\ge3,
  \qquad
  \ell(\lambda_r)=2,
\]
then \(D\) is realizable by Baroni and Petronio
\cite[Theorem~3.1]{BaroniPetronio2024}, unless it is one of the thirteen
exceptional families listed in that theorem.
\end{enumerate}
\end{proposition}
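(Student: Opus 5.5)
This proposition is a bookkeeping statement: each part translates a published theorem into the notation fixed above, so my plan is to check hypotheses and notation rather than to argue anything new.

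\emph{Part (a).} I will invoke \cite[Proposition~5.2]{EKS1984}. In our language it says: a datum of nontrivial partitions of \(d\) containing the full cycle \([d]\) is realizable whenever the Riemann--Hurwitz total branching is even and at least \(2d-2\). The translation is:
\begin{enumerate}[label=(\roman*)]
\item their total branching equals \(v(D)\);
\item the condition \(v(D)=2(d+h-1)\) with \(h\ge0\) is the same as \(v(D)\) being even and \(v(D)\ge 2d-2\);
\item their monodromy formulation (a transitive product-one tuple) is exactly our definition of realizability.
\end{enumerate}
I will also note that the result places no lower bound on the number of branches \(r\) beyond what the defect inequality already forces.

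\emph{Part (b).} Here I will cite \cite[Proposition~5.3]{EKS1984} with the specified branch \([d-1,1]\), under the same translation. The extra work is to restate their exceptional list as multisets. The family with \(d=4\) consists of \([3,1]\) together with copies of \([2,2]\); parity and \(v(D)\ge 6\) force \(r\ge3\). The other family is \(\{\!\{[2^{d/2}],[2^{d/2}],[d-1,1]\}\!\}\) with \(d\) even, which I will check is a candidate datum of genus zero that fails to be realizable. Since realizability is independent of indexing (Hurwitz moves), stating the exceptions as multisets \(\mathcal B(D)\) loses nothing.

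\emph{Part (c).} I will quote \cite[Theorem~3.1]{BaroniPetronio2024}, which classifies candidate data over the sphere with at least three branches in which some partition has length two. Its conclusion is that such a datum is realizable except for thirteen explicit families. The hypothesis \(r\ge3\) matches their setting, because data with \(r\le2\) are treated separately there.

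The main obstacle is not mathematical but one of conventions. I must confirm that "candidate datum" in each source requires nontrivial partitions and the integer-genus condition exactly as here; the remark after the definition of \(v(D)\) records this convention. I must also confirm that the sources' ordering and multiset conventions do not alter the lists of exceptions.
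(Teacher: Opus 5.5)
Your proposal is correct and takes the same approach as the paper. The paper gives no separate argument for this proposition: it simply records Edmonds--Kulkarni--Stong Propositions~5.2 and~5.3 and Baroni--Petronio Theorem~3.1 in its own notation, using exactly your translation (nontrivial entries with \(v(D)\) even and \(v(D)\ge 2d-2\) is the candidate-datum condition). Your extra check that the exceptional families are genuinely non-realizable is harmless but unnecessary, since the statement only asserts realizability outside those families.
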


We now exclude the published exceptions when the specified branch is created
by reducing a Zieve-admissible datum.

\begin{lemma}[Exclusion of the published exceptions]
\label{lem:exclude-published-exceptions}
Let \(\Lambda\) be a Zieve-admissible datum of degree \(d\), and let
\[
  D=(\lambda_1,\ldots,\lambda_r),\qquad r\ge3,
\]
be obtained from \(\Lambda\) by a reduction.  Index the
entries so that \(\lambda_r\) is the branch created by the reduction.  Assume
that
\[
  \lambda_r=[d-1,1]
  \quad\text{or}\quad
  \ell(\lambda_r)=2.
\]
Then \(D\) is neither of the two exceptional families in Edmonds, Kulkarni,
and Stong \cite[Proposition~5.3]{EKS1984} nor any exceptional family in Baroni
and Petronio \cite[Theorem~3.1]{BaroniPetronio2024}.
\end{lemma}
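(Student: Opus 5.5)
The plan is to use one structural fact about the reduced datum: every entry of \(D\) other than the specified branch \(\lambda_r\) is inherited unchanged from \(\Lambda\), and therefore has gcd one. A reduction replaces exactly two entries of \(\Lambda\) by \(\lambda_r\), so \(\lambda_1,\ldots,\lambda_{r-1}\) are entries of \(\Lambda\), and \(r\ge3\) guarantees there are at least two of them. It therefore suffices to show that every published exceptional family contains, besides one distinguished branch of type \([d-1,1]\) or of length two, at least one further branch with gcd greater than one. We must allow for the possibility that the matching sends the created branch to some other slot of the exceptional pattern. So the claim to verify is stronger: in each exceptional multiset, for every choice of an entry playing the role of \(\lambda_r\), some remaining entry has gcd \(>1\).

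For the two families of \cite[Proposition~5.3]{EKS1984} this is immediate. In \(\{\!\{[2,2],\ldots,[2,2],[3,1]\}\!\}\) with \(r\ge3\), there are at least two copies of \([2,2]\). Whichever entry is removed as \(\lambda_r\), a copy of \([2,2]\), with gcd \(2\), survives among \(\lambda_1,\ldots,\lambda_{r-1}\). In \(\{\!\{[2^{d/2}],[2^{d/2}],[d-1,1]\}\!\}\), removing any single entry again leaves a copy of \([2^{d/2}]\), whose gcd is \(2\). In both cases this contradicts the branchwise gcd-one property of \(\Lambda\).

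For \cite[Theorem~3.1]{BaroniPetronio2024}, I would go through the thirteen exceptional families one at a time, reading off their shapes. I expect them to consist of data whose other branches are of the form \([2^{d/2}]\), \([2,\ldots,2,1]\)-type partitions with a specific structure, or partitions all of whose parts are divisible by a common \(k\ge2\), for example \([k^{d/k}]\) patterns arising from composite-degree obstructions. For each family the task is to exhibit at least two entries with gcd \(>1\), so that one survives whichever entry is \(\lambda_r\). Alternatively, if some family has exactly one such entry, I would check that this entry cannot be \(\lambda_r\) itself, because it fails to have length two or fails to equal \([d-1,1]\).

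The main obstacle is this second step. It is a finite but careful case inspection of the published list, and some families contain branches such as \([1,\ldots]\)-containing partitions or \([d-1,1]\)-like entries whose gcd is one. For those families I must confirm that the gcd-\(>1\) entries, usually repeated copies of \([2^{d/2}]\) or a partition \([k,\ldots,k]\), cannot all be absorbed into the single slot \(\lambda_r\). I would first try the uniform argument that each exceptional family has at least two branches with an even or common-factor pattern, and handle any family that violates this by the length and shape mismatch with \(\lambda_r\).
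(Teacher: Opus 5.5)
You reduce the lemma to the right statement, and it is exactly the paper's: each published exceptional multiset, for every entry that could be the created branch, still contains another entry with gcd $>1$. Your treatment of the two Edmonds--Kulkarni--Stong families is complete. The gap is the Baroni--Petronio half. You never inspect the thirteen families; you only predict their shape, and that inspection is the whole remaining content of the lemma.

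The prediction is also unreliable. The actual list contains entries such as $[1^3,3^3]$, $[1^{2k-1},k+1,k+2]$ and $[2^{k-4},3,5]$. Its gcd-$>1$ entries are not only of a ``$[2^{d/2}]$ or $[k^{d/k}]$'' pattern; they also include two-part partitions such as $[6,6]$, $[2k+1,2k+1]$, $[ph,(k-p)h]$, $[4,4]$, $[3,3]$, $[2,4]$. Your strengthened claim, for \emph{every} entry, is actually false for one family, so it must be restricted to entries of length two. This restriction is legitimate because $\lambda_r$ has two parts under either hypothesis, $[d-1,1]$ included.

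Carrying out the check settles this. In twelve of the families at least two entries have gcd $>1$, so removing the created entry always leaves one. For example:
\begin{itemize}
\item $[2^6]$ and $[6,6]$ in $([2^6],[1^3,3^3],[6,6])$;
\item the two copies of $[2^k]$ in $([2^k],[2^k],[s,2k-s])$;
\item $[h^k]$ and $[ph,(k-p)h]$ in $([h^k],[1^{kh-k-1},k+1],[ph,(k-p)h])$;
\item the $n-1\ge2$ copies of $[2,2]$ alongside $[1,3]$.
\end{itemize}
The exception is $([2^{2k}],[1^{2k-2},k+1,k+1],[2k-1,2k+1])$ with $k\ge2$, where only $[2^{2k}]$ has gcd $>1$. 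Here your fallback is exactly what is needed: $[2^{2k}]$ has $2k\ge4$ parts, so it cannot be the created length-two branch and must be inherited.

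Once this enumeration is written out, your argument is complete and coincides with the paper's. The paper goes item by item, and also checks every possible choice of the length-two entry in families where several entries have two parts, such as $([3,3],[3,3],[2,4])$.
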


\begin{proof}
Every branch of \(D\) other than \(\lambda_r\) is inherited from the
Zieve-admissible datum \(\Lambda\), and hence has gcd one.  We compare this
condition explicitly with the published exception lists.

Edmonds, Kulkarni, and Stong
\cite[Proposition~5.3]{EKS1984} list exactly two exception families when the
specified branch is \([d-1,1]\):
\begin{enumerate}[label=\textnormal{(E\arabic*)},leftmargin=*]
\item For
\[
  \{\!\{
    \underbrace{[2,2],\ldots,[2,2]}_{r-1\text{ copies}},[3,1]
  \}\!\}
  \qquad(d=4,\ r\ge3),
\]
  the branch created by the reduction is \([3,1]\).  Every \([2,2]\) branch
  is therefore inherited from the original datum, and it has gcd \(2\).

\item For
\[
  \{\!\{[2^{d/2}],[2^{d/2}],[d-1,1]\}\!\}
  \qquad(d\text{ even},\ r=3),
\]
  the branch created by the reduction is \([d-1,1]\).  Both
  \([2^{d/2}]\) branches are inherited from the original datum and have gcd
  \(2\).
\end{enumerate}

In Baroni and Petronio \cite[Theorem~3.1]{BaroniPetronio2024}, the branch with
two parts is placed last.  When an exceptional datum contains more than one
branch with two parts, any one of them could in principle be the branch created
by the reduction.  We therefore check every item and every possible choice of
the new branch.
\begin{enumerate}[label=\textnormal{(\arabic*)},leftmargin=*]
\item The datum is
\[
  ([2^6],[1^3,3^3],[6,6]).
\]
Only \([6,6]\) has two parts, so it is the branch created by the reduction.
The inherited branch \([2^6]\) has gcd \(2\).

\item The datum is
\[
  ([2^k],[2^k],[s,2k-s]),\qquad k\ge2,\quad s\ne k.
\]
If \([s,2k-s]\) is the branch created by the reduction, both all-\(2\) branches
are inherited.  An all-\(2\) branch can itself have two parts only when
\(k=2\); if one such \([2,2]\) branch is the new branch, the other \([2,2]\)
branch is inherited.  In every case an inherited branch has gcd \(2\).

\item The datum is
\[
  ([2^k],[1,2^{k-2},3],[k,k]),\qquad k\ge2.
\]
For \(k>2\), only \([k,k]\) has two parts, so the inherited \([2^k]\) has gcd
\(2\).  For \(k=2\), the first and third branches are both \([2,2]\), while
the middle branch is \([1,3]\).  Whichever branch with two parts is created by
the reduction, at least one of the two \([2,2]\) branches is inherited and has
gcd \(2\).

\item The datum is
\[
  ([2^{2k+1}],[1^{2k-1},k+1,k+2],[2k+1,2k+1]),\qquad k\ge1.
\]
The third branch is the only branch with two parts and hence is the branch
created by the reduction; the inherited \([2^{2k+1}]\) has gcd \(2\).

\item The datum is
\[
  ([2^{2k}],[1^{2k-2},k+1,k+1],[2k-1,2k+1]),\qquad k\ge2.
\]
Only the third branch has two parts.  Thus \([2^{2k}]\) is inherited and has
gcd \(2\).

\item The datum is
\[
  ([h^k],[1^{kh-k-1},k+1],[ph,(k-p)h]),
  \qquad h,k\ge2,\quad0<p<k.
\]
If the third branch is created by the reduction, \([h^k]\) is inherited and
has gcd \(h\).  The first branch has two parts only when \(k=2\); then \(p=1\),
so the third branch is \([h,h]\), and it is inherited if the first branch is
the new branch.  The middle branch has two parts only when \(k=h=2\), when it
is \([1,3]\); if it is the new branch, the first and third branches are both
\([2,2]\).  Hence every possible choice of the branch created by the reduction
leaves an inherited branch with gcd at least \(h>1\).

\item The datum is
\[
  ([3,3],[3,3],[2,4]).
\]
All three branches have two parts.  If \([2,4]\) is the branch created by the
reduction, both \([3,3]\) branches are inherited; if one \([3,3]\) branch is
the new branch, the other is inherited.  Thus an inherited branch always has
gcd \(3\).

\item The datum is
\[
  ([2^4],[4,4],[3,5]).
\]
The branch created by the reduction can be either \([4,4]\) or \([3,5]\), but
the branch \([2^4]\) is inherited in both cases and has gcd \(2\).

\item The datum is
\[
  ([2^6],[3^4],[5,7]).
\]
Only \([5,7]\) has two parts and hence can be the branch created by the
reduction.  The inherited branches \([2^6]\) and \([3^4]\) have gcds \(2\)
and \(3\), respectively.

\item The datum is
\[
  ([2^8],[1,3^5],[8,8]).
\]
Only \([8,8]\) has two parts.  The inherited branch \([2^8]\) has gcd \(2\).

\item The datum is
\[
  ([2^k],[2^{k-4},3,5],[k,k]),\qquad k\ge5.
\]
The third branch is the only branch with two parts, so it is the branch created
by the reduction and \([2^k]\) is inherited with gcd \(2\).

\item The datum is
\[
  ([2^4],[2^4],[2^4],[3,5]).
\]
Only \([3,5]\) has two parts.  All three inherited branches are \([2^4]\) and
have gcd \(2\).

\item The datum consists of \(n-1\) copies of \([2,2]\) and one copy of
\([1,3]\), with \(n\ge3\).  Every branch has two parts.  If \([1,3]\) is the
branch created by the reduction, all \(n-1\) copies of \([2,2]\) are inherited;
if a \([2,2]\) is the new branch, at least \(n-2\ge1\) further copy is
inherited.  Thus an inherited branch always has gcd \(2\).
\end{enumerate}
Thus every Edmonds--Kulkarni--Stong or Baroni--Petronio
exception contains a branch inherited from the original datum with gcd
\(>1\), contradicting its branchwise gcd-one condition.
\end{proof}

\begin{corollary}[Realizability with gcd-one inherited branches]
\label{cor:special-reduced-data}
Let \(\Lambda\) be a Zieve-admissible datum of degree \(d\), and let
\[
  D=(\lambda_1,\ldots,\lambda_r),\qquad r\ge3,
\]
be a candidate datum of source genus \(h\ge0\) obtained from \(\Lambda\) by a
reduction.  Index the entries so that \(\lambda_r\) is the
branch created by the reduction.  Then
\[
  v(D)=2(d+h-1)\ge2d-2.
\]
Assume that
\[
  \lambda_r=[d],
  \qquad
  \lambda_r=[d-1,1],
  \qquad\text{or}\qquad
  \ell(\lambda_r)=2,
\]
Then \(D\) is realizable.
\end{corollary}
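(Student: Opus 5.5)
The plan is to verify the hypotheses of Proposition~\ref{prop:published-special-reduced-data} for \(D\), and to use Lemma~\ref{lem:exclude-published-exceptions} to rule out its exceptional families.

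First I will establish the displayed identity.  Since \(D\) is a candidate datum of source genus \(h\), the definition gives \(v(D)=2(d+h-1)\).  This is even, and because \(h\ge0\) it is at least \(2d-2\).  Every entry of \(D\) is nontrivial: the inherited entries come from the candidate datum \(\Lambda\), and the new entry is nontrivial because \(D\) is itself a candidate datum.

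I then split according to the form of \(\lambda_r\).
\begin{enumerate}[label=(\roman*)]
\item If \(\lambda_r=[d]\), part~(a) of Proposition~\ref{prop:published-special-reduced-data} applies directly, since \(v(D)\) is even and \(v(D)\ge2d-2\).  No exceptions arise in this case.
\item If \(\lambda_r=[d-1,1]\), part~(b) applies with the same two numerical hypotheses.  It gives realizability unless \(\mathcal B(D)\) is one of the two listed multisets.  Lemma~\ref{lem:exclude-published-exceptions} applies because \(r\ge3\) and \(\lambda_r\) is the branch created by the reduction, and it shows that \(D\) is neither of them.
\item If \(\ell(\lambda_r)=2\), then \(D\) is a candidate datum with \(r\ge3\), so part~(c) gives realizability apart from the thirteen Baroni--Petronio families.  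Lemma~\ref{lem:exclude-published-exceptions} again excludes all of these.
\end{enumerate}

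The cases overlap when \(d=3\), where \([d-1,1]\) has length two, and also in the degenerate coincidence \([d-1,1]=[d]\), which cannot occur.  Any route through the cases suffices, so the overlap causes no difficulty.

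The only point needing care is the matching between the indexing convention of the published results and the branch created by the reduction.  In part~(c), Baroni--Petronio place the two-part branch last.  Lemma~\ref{lem:exclude-published-exceptions} was proved by checking every possible choice of the new branch among the two-part entries, so \(D\) avoids the exceptions whichever two-part entry is regarded as specified.  In particular, taking \(\lambda_r\) as the specified branch is legitimate.  Realizability does not depend on the ordering, by the Hurwitz-move remark, so the conclusion holds for \(D\) as a multiset.
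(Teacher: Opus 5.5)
Your proposal is correct and follows the paper's proof: apply parts (a), (b), (c) of Proposition~\ref{prop:published-special-reduced-data} according to the form of \(\lambda_r\), and invoke Lemma~\ref{lem:exclude-published-exceptions} to rule out the exceptional families in (b) and (c). One harmless slip is that \([d-1,1]\) has length two for every \(d\ge2\), not only for \(d=3\), so cases (ii) and (iii) always overlap; as you note, this causes no difficulty.
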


\begin{proof}
Apply Proposition~\ref{prop:published-special-reduced-data}(a) when
\(\lambda_r=[d]\), part~(b) when \(\lambda_r=[d-1,1]\), and part~(c) when
\(\ell(\lambda_r)=2\) in all remaining cases.
Lemma~\ref{lem:exclude-published-exceptions} excludes
every exception in parts~(b) and~(c).
\end{proof}

\subsection{Incidence forests for Move A1}
\label{subsec:incidence-forests}

The encoding of a bipartite map by permutations acting on its
edge set is standard in the theory of dessins and hypermaps; see Lando and
Zvonkin \cite[Section~1.5.1 and
Proposition~1.5.3]{LandoZvonkin2004}.
We record the special forest case in the form used throughout this paper.

\begin{definition}[Bipartite incidence multigraph]
\label{def:incidence-graph}
Let \(\Omega\) be a finite set and let
\(\alpha,\beta\in\operatorname{Sym}(\Omega)\).  The \emph{bipartite
incidence multigraph} \(\Gamma(\alpha,\beta)\) is defined as follows.
Its black vertices are the cycles of \(\alpha\), including the fixed-point
cycles, and its white vertices are the cycles of \(\beta\).  For each
\(x\in\Omega\), there is an edge \(e_x\) joining the two cycles containing
\(x\).  Every edge has unit weight.  For a subgraph \(K\), its \emph{weight}
is
\[
  w(K)=|E(K)|.
\]
Thus the degree of each vertex is the length of its corresponding cycle.
\end{definition}

The connected components of \(\Gamma(\alpha,\beta)\) are precisely the
orbits of \(\langle\alpha,\beta\rangle\) on \(\Omega\).  Indeed, the moves
along the edges incident to a black or white vertex are exactly the moves
within a cycle of \(\alpha\) or \(\beta\).  For a permutation \(\gamma\) of
\(\Omega\), we write \(\ell(\gamma)\) for its number of cycles, including
fixed points, and \(b(\gamma)=|\Omega|-\ell(\gamma)\).

\begin{proposition}[Incidence-forest criterion]
\label{prop:incidence-forest}
Let \(|\Omega|=d\) and let \(\alpha,\beta\in\operatorname{Sym}(\Omega)\).
Then
\[
  b(\alpha\beta)\le b(\alpha)+b(\beta).
\]
Equality holds if and only if \(\Gamma(\alpha,\beta)\) is a forest.  In the
equality case, every connected component \(K\) supports exactly one cycle of
\(\alpha\beta\), and this cycle has length \(w(K)\).  Consequently, the
cycle type of \(\alpha\beta\) is the partition formed by the component
weights of \(\Gamma(\alpha,\beta)\).
\end{proposition}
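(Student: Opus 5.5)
The argument I have in mind is an Euler-characteristic count on \(\Gamma=\Gamma(\alpha,\beta)\), combined with a direct trace of how \(\alpha\beta\) acts along the edges. Put \(V=\ell(\alpha)+\ell(\beta)\), \(E=d\), and let \(c\) be the number of connected components. Every graph satisfies \(E-V+c\ge0\), with equality exactly for forests. Since \(d-V=-\ell(\alpha)-\ell(\beta)+d\), this can be rewritten as
\[
  b(\alpha)+b(\beta)=2d-V\ge d-c,
\]
with equality if and only if \(\Gamma\) is a forest. The task is therefore to compare \(b(\alpha\beta)\) with \(d-c\), component by component.

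\textbf{Step 1.} Each component \(K\) is an orbit of \(\langle\alpha,\beta\rangle\), and \(\alpha\beta\) preserves it. Hence \(\alpha\beta\) restricted to the letters of \(K\) has at least one cycle, so \(b(\alpha\beta|_K)\le w(K)-1\). This gives only \(b(\alpha\beta)\le d-c\), which is too weak. What is needed is the genus-type inequality
\[
  \ell(\alpha\beta|_K)\le w(K)-V(K)+2,
\]
obtained component by component.

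\textbf{Step 2.} I would prove this inequality by viewing \((\alpha,\beta,(\alpha\beta)^{-1})\) on the orbit as a hypermap (the Lando--Zvonkin encoding cited just before the definition). Riemann--Hurwitz then gives
\[
  \ell(\alpha|_K)+\ell(\beta|_K)+\ell(\alpha\beta|_K)=w(K)+2-2g_K,\qquad g_K\ge0.
\]
If one prefers to avoid the topology, the same bound follows by induction on the number of edges. Each transposition multiplied onto \(\beta\) changes \(\ell\) by \(\pm1\), so removing one letter-edge changes the counts by controlled amounts. Summing the inequality over the components gives
\[
  \ell(\alpha\beta)\le d-\ell(\alpha)-\ell(\beta)+2c,
\]
that is,
\[
  b(\alpha\beta)\le b(\alpha)+b(\beta)-2(d-V+c)+\bigl(d-V+c\bigr)\cdot 0 .
\]
More cleanly, \(b(\alpha\beta)=\sum_K(w(K)-\ell_K)\) and \(\ell_K\ge1\). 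Combining \(\ell_K\le w(K)-V(K)+2\) with the tree bound \(w(K)\ge V(K)-1\) yields the two facts I need. First, \(b(\alpha\beta)\le b(\alpha)+b(\beta)\) holds in general. Second, equality forces \(\ell_K=1\) and \(w(K)=V(K)-1\) for every \(K\), i.e. \(\Gamma\) is a forest, and conversely.

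\textbf{Step 3.} For the converse and the cycle-type statement, let \(K\) be a tree. The hypermap formula with \(g_K\ge0\) and \(V(K)=w(K)+1\) gives \(\ell_K\le1\), so exactly one cycle of \(\alpha\beta\) lives on \(K\). Its length is the number of letters in \(K\), namely \(w(K)\). Thus the cycle type of \(\alpha\beta\) is the multiset of component weights. Summing over the components then gives \(b(\alpha\beta)=d-c=b(\alpha)+b(\beta)\), which is the equality case.

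The main obstacle is the per-component inequality \(\ell_K\le w(K)-V(K)+2\), i.e. the nonnegativity of the genus. If I do not simply cite it, I would prove it by induction: delete one letter, which means multiplying \(\beta\) by a transposition. This changes \(\ell(\beta)\) and \(\ell(\alpha\beta)\) each by \(\pm1\), and I would track whether the component splits. The case bookkeeping there is the delicate part.
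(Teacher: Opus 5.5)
Your final argument is correct. For the inequality and for the implication ``equality $\Rightarrow$ forest, with one product cycle per component'', your ``more cleanly'' computation is the same as the paper's. Summing $\ell_K\ge1$ and $w(K)\ge V(K)-1$ over the components gives exactly the paper's $\ell(\alpha\beta)\ge c$ and $c\ge V-E$.

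Two slips in the route there are worth fixing. First, your Step~1 verdict that $b(\alpha\beta)\le d-c$ is ``too weak'' is mistaken. Combined with $d-c\le 2d-V$, which you had already written down, it finishes this half immediately. Second, the display $b(\alpha\beta)\le b(\alpha)+b(\beta)-2(d-V+c)$ has the inequality reversed. Summing $\ell_K\le w(K)-V(K)+2$ gives the \emph{lower} bound $b(\alpha\beta)\ge V-2c$, so the genus bound can never yield the upper bound. For example, $\alpha=\beta=(1\,2\,3)$ gives $b(\alpha\beta)=2>0=V-2c$.

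The real difference from the paper is the converse: a tree component carries a single cycle of $\alpha\beta$. You obtain this from nonnegativity of the hypermap genus, $\ell(\alpha|_K)+\ell(\beta|_K)+\ell(\alpha\beta|_K)=w(K)+2-2g_K$ with $g_K\ge0$. For $V(K)=w(K)+1$ this forces $\ell_K\le1$. That step is valid as a citation, and it shows a bit more: forest components are exactly the genus-zero components with a single product cycle.

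The paper avoids genus entirely and peels a leaf edge $e_x$. The letter $x$ is then fixed by one of the two permutations. Deleting $x$ from the other permutation's cycle leaves a smaller tree, which carries one product cycle by induction, and restoring $x$ inserts it into that cycle. This argument is elementary and self-contained. Your route instead imports a theorem whose combinatorial proof (the transposition induction you only sketch) is more delicate than what is needed here.
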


\begin{proof}
Put
\[
  V=\ell(\alpha)+\ell(\beta),\qquad E=d,
\]
and let \(c\) be the number of connected components of \(\Gamma\).  Every
cycle of \(\alpha\beta\) lies in one orbit of
\(\langle\alpha,\beta\rangle\), hence
\[
  \ell(\alpha\beta)\ge c.
\]
For any finite multigraph, \(c\ge V-E\).  Therefore
\[
\begin{aligned}
  b(\alpha\beta)
    &=d-\ell(\alpha\beta)\\
    &\le d-(V-E)\\
    &=(d-\ell(\alpha))+(d-\ell(\beta))
      =b(\alpha)+b(\beta).
\end{aligned}
\]

Suppose equality holds.  Then
\[
  V-E=\ell(\alpha\beta)\ge c\ge V-E,
\]
so \(c=V-E\).  This is the Euler criterion for every component of
\(\Gamma\) to be a tree.  The same chain of equalities gives
\(\ell(\alpha\beta)=c\), so there is exactly one product cycle in each
component.

Conversely, suppose \(\Gamma\) is a forest.  We prove by induction on the
number of edges in a tree component that \(\alpha\beta\) has one cycle on the
edge labels of that component.
For a one-edge component this is immediate.  For a larger tree, choose a leaf
edge \(e_x\).  If its leaf is black, then \(x\) is a fixed point of
\(\alpha\); deleting \(e_x\) deletes \(x\) from its cycle of \(\beta\).
By induction, the product on the remaining edges is one cycle, and restoring
\(x\) inserts it into that cycle.  The case of a white leaf is symmetric.
Thus each component \(K\) gives one cycle containing all \(w(K)\) of its edge
labels.  Hence \(\ell(\alpha\beta)=c=V-E\), which gives equality of the
defects and proves the final assertion.
\end{proof}

\begin{corollary}[Forest realization and completion]
\label{cor:forest-completion}
Let \(\lambda,\nu\vdash d\) satisfy
\[
  b(\lambda)+b(\nu)\le d-1.
\]
Then there exist \(\alpha\in\Cl_\lambda\) and
\(\beta\in\Cl_\nu\) such that \(\Gamma(\alpha,\beta)\) is a forest.  Its
component weights form the cycle type of a product realizing Move A1.

More generally, suppose that some disjoint incidence-tree components have
already been prescribed using whole parts of \(\lambda\) and \(\nu\), and
that the unused parts form partitions \(\lambda'\) and \(\nu'\) of a common
remaining degree \(d'>0\).  If
\[
  b(\lambda')+b(\nu')\le d'-1,
\]
then the prescribed components can be completed by a disjoint incidence
forest on the unused symbols.  The union is an incidence forest for a product
of cycle types \(\lambda\) and \(\nu\) that realizes Move A1.
\end{corollary}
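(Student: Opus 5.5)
The plan is to reduce both assertions to a single combinatorial claim about bipartite forests with prescribed vertex degrees. Once such a forest exists, Proposition~\ref{prop:incidence-forest} supplies the permutations. Label the edges of a bipartite forest by the symbols of \(\Omega\). Let \(\alpha\) cyclically permute the edge labels around each black vertex, and let \(\beta\) do the same around each white vertex, in any chosen cyclic order. Then \(\alpha\) has cycle type given by the black degrees, \(\beta\) has cycle type given by the white degrees, and \(\Gamma(\alpha,\beta)\) is exactly the given forest. So the first assertion amounts to the following claim: given \(\lambda=[\lambda_1,\ldots,\lambda_p]\) and \(\nu=[\nu_1,\ldots,\nu_q]\) of \(d\) with \(p+q\ge d+1\) (equivalently \(b(\lambda)+b(\nu)\le d-1\)), there is a bipartite forest with \(d\) edges whose black degrees are the \(\lambda_i\) and whose white degrees are the \(\nu_j\).

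To prove the claim, I would build a single tree on a subset and add isolated edges for the rest. A bipartite tree with black degrees \(\lambda_1,\ldots,\lambda_{p'}\) and white degrees \(\nu_1,\ldots,\nu_{q'}\) exists whenever \(\sum\lambda_i=\sum\nu_j=e\) and \(p'+q'=e+1\), with all degrees positive. I would prove this by induction on \(e\). Since \(p'+q'=e+1\) and the average degree on each side is \(e/p'\) or \(e/q'\), some side has a part equal to \(1\), that is, a leaf. If \(e\ge2\), the opposite side has a part at least \(2\), because otherwise that side would have \(e\) vertices, forcing the leaf side to have one vertex of degree \(e\). Remove the leaf and decrease that opposite part by \(1\); the counts become \(e-1\) and \(p'+q'-1=e\), and induction reattaches the leaf.

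In general \(p+q\ge d+1\) and there may be more parts than a single tree uses. The excess \(p+q-(d+1)\) is the number of extra components needed. I would peel off isolated edges, each pairing a black \(1\) with a white \(1\); this lowers \(d\) by one and \(p+q\) by two, so the excess drops by one. What must be checked is that, while the excess is positive, both \(\lambda\) and \(\nu\) contain a part \(1\). This is the main point. Since \(p\le d\), we have \(q\ge1\) plus the excess, and symmetrically for \(p\). If \(\nu\) had no part \(1\), then \(q\le d/2\), which gives \(p\ge d/2+1+\text{excess}\). Here I expect a counting argument to fail directly, because \(\lambda=[1^d]\) and \(\nu=[d]\) give excess \(0\) but are fine. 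The honest route is a component-wise induction instead. Choose a component to be a star: the largest part \(\nu_1\) of \(\nu\) joined to \(\nu_1\) black parts. This requires care, so I will instead allow general trees. Greedily split \(\lambda\) and \(\nu\) into sub-multisets with equal sums \(e_t\) and counts \(e_t+1\). This is possible because each \(\lambda_i\) is at least \(1\) and we can pair a white part \(\nu_j\) with \(\nu_j\) black parts summing to at least \(\nu_j\)... The remaining accounting is what I expect to be the genuinely fiddly step. The fallback is to cite \cite[Lemma~4.2]{EKS1984} (Proposition~\ref{prop:A1}) via the equality case of Proposition~\ref{prop:incidence-forest}. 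Any product realizing Move A1 has equality \(b(\alpha\beta)=b(\alpha)+b(\beta)\), so its incidence graph is automatically a forest. This gives the first assertion immediately and cleanly, and I would adopt it as the primary proof.

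For the completion statement, apply the first assertion (via Proposition~\ref{prop:A1}) to \(\lambda',\nu'\) on the \(d'\) unused symbols, using the hypothesis \(b(\lambda')+b(\nu')\le d'-1\). This yields \(\alpha',\beta'\) on those symbols whose incidence graph is a forest. Let \(\alpha\) and \(\beta\) be the disjoint unions of the prescribed cycles with \(\alpha'\) and \(\beta'\). Then \(\Gamma(\alpha,\beta)\) is the disjoint union of the prescribed trees and the new forest, hence itself a forest. The types are \(\lambda\) and \(\nu\) because the prescribed components used whole parts. Proposition~\ref{prop:incidence-forest} then gives \(b(\alpha\beta)=b(\lambda)+b(\nu)\), so \(\alpha\beta\) realizes Move A1 and its type is the list of component weights.
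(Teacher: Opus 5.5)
Your adopted argument is correct and is essentially the paper's proof. You take a product \(\alpha\beta\) of type \(\mu\) from Proposition~\ref{prop:A1} and invoke the equality case of Proposition~\ref{prop:incidence-forest} to see that \(\Gamma(\alpha,\beta)\) is a forest; for the completion, you form the disjoint union of the prescribed trees with a forest for \(\lambda',\nu'\) on the unused symbols. The earlier direct combinatorial construction is left unfinished, so delete it rather than presenting it alongside the proof.
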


\begin{proof}
Proposition~\ref{prop:A1} gives a partition \(\mu\vdash d\) with
\[
  \Cl_\mu\subset\Cl_\lambda\Cl_\nu,
  \qquad b(\mu)=b(\lambda)+b(\nu).
\]
Choose \(\alpha\in\Cl_\lambda\) and \(\beta\in\Cl_\nu\) whose product has
cycle type \(\mu\).  Proposition~\ref{prop:incidence-forest} shows that
\(\Gamma(\alpha,\beta)\) is a forest and identifies its component weights
with the parts of \(\mu\).

For the completion statement, apply the first part to
\(\lambda',\nu'\) on an alphabet disjoint from the labels of the prescribed
components.  Taking the disjoint union of the resulting forest with the
prescribed trees defines permutations of cycle types \(\lambda,\nu\).  Their
incidence graph is a forest, so Proposition~\ref{prop:incidence-forest}
again shows that the defect of their product is the sum of their defects.
\end{proof}

For the constructive arguments below, we also need to recognize which
collections of prescribed cycle lengths can form a tree component.

\begin{lemma}[Balanced tree-component criterion]
\label{lem:balanced-tree-component}
Let \(P\) and \(Q\) be nonempty finite sets of prospective black and white
vertices, and assign a positive integer prescribed degree
\(a_v\) to every \(v\in P\sqcup Q\).  There is a bipartite tree on
\(P\sqcup Q\) whose vertex degrees are the prescribed degrees
if and only if, for some positive integer
\(S\),
\[
  \sum_{v\in P}a_v=\sum_{v\in Q}a_v=S,
  \qquad |P|+|Q|=S+1.
\]
Such a tree is an incidence component of weight \(S\).
\end{lemma}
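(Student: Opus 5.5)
The plan is to prove the two directions separately. For necessity, count vertices and edges and check the degree sums. For sufficiency, build the tree by induction on \(|P|+|Q|\), repeatedly removing a leaf.

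\emph{Necessity.} In a bipartite tree every edge joins a vertex of \(P\) to a vertex of \(Q\). Summing degrees over either colour class therefore counts each edge exactly once. This gives \(\sum_{v\in P}a_v=\sum_{v\in Q}a_v=|E|\). Put \(S=|E|\). A tree satisfies \(|V|=|E|+1\), so \(|P|+|Q|=S+1\). Since \(P\) and \(Q\) are nonempty, the tree has at least one edge, and hence \(S\ge1\).

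\emph{Sufficiency.} Induct on \(|P|+|Q|=S+1\).

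\textbf{Base case.} When \(S=1\), we have \(|P|=|Q|=1\), both degrees equal \(1\), and a single edge works.

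\textbf{Inductive step.} Suppose \(S\ge2\), so \(|P|+|Q|=S+1\ge3\).

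First, a leaf exists. If every \(a_v\ge2\), then
\[
2S=\sum_{v\in P\sqcup Q}a_v\ge 2(S+1),
\]
which is impossible. So some vertex has degree \(1\).

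Second, choose the leaf so that removing it is legal. The removal must not empty its colour class, and its neighbour must have degree at least \(2\).

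Suppose every vertex with \(a_v=1\) is the unique member of its class. Each such class then has degree sum \(1\), which forces \(S=1\), a contradiction. So some leaf \(u\) lies, without loss of generality, in \(P\) with \(|P|\ge2\).

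Now choose the neighbour \(w\in Q\) with \(a_w\ge2\). Such a \(w\) exists: if every vertex of \(Q\) had degree \(1\), then \(|Q|=S\), and hence \(|P|=1\), contradicting \(|P|\ge2\).

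Third, apply induction. Delete \(u\) and lower \(a_w\) by one, which keeps \(a_w\) positive. The new data on \((P\setminus\{u\})\sqcup Q\) have both degree sums equal to \(S-1\). The vertex count is \(S\), which equals \((S-1)+1\). Both classes remain nonempty.

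By induction there is a bipartite tree realizing the new data. Reattaching \(u\) to \(w\) by one edge gives a tree with the prescribed degrees.

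\emph{Final assertion.} Interpret \(P\) as cycles of \(\alpha\) with lengths \(a_v\) and \(Q\) as cycles of \(\beta\). Label the \(S\) edges by distinct symbols. Each vertex then becomes a cycle through its incident edge labels, in any cyclic order. As in Definition~\ref{def:incidence-graph}, this makes the tree an incidence component with \(S\) edges. Its weight is \(S\), and by Proposition~\ref{prop:incidence-forest} it supports one cycle of length \(S\) of the product.

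The only step needing care is the leaf selection in the inductive step. The leaf's colour class must keep at least one vertex, and the neighbour must have degree at least \(2\) so that it stays positive after the decrement. The counting arguments above handle both requirements.
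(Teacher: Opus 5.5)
Your proof is correct and follows essentially the same route as the paper: edge counting for necessity, and for sufficiency an induction that deletes a degree-one vertex and reattaches it to an opposite-class vertex of prescribed degree at least $2$. The only cosmetic difference is the base case. You take $S=1$ and show that when $S\ge2$ the chosen leaf's class has at least two vertices, whereas the paper takes as its base case the situation where one colour class is a singleton, which forces a star.
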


\begin{proof}
Necessity follows by counting edges on the two colour classes and using
\(|E|=|V|-1\) for a tree.  For sufficiency, argue by induction on
\(|P|+|Q|\).  If one colour class has one vertex, the equations force a star.
Otherwise the equality \(S=|P|+|Q|-1\) implies that some prescribed degree is
one.  Choose such a vertex \(v\).  The opposite colour class
contains a vertex \(u\) whose prescribed degree is greater than one: otherwise
all degrees in that class would be one, and its degree sum would force the
colour class containing \(v\) to have only one vertex.  Remove \(v\) and decrease the degree
of \(u\) by one.  The two displayed equalities are preserved for the smaller
degree assignment, so induction supplies a tree; attaching \(v\) to \(u\) completes the
required tree.
\end{proof}

\begin{corollary}[Forest assembly]
\label{cor:forest-assembly}
Suppose the parts of \(\lambda\) and \(\nu\) are divided into paired blocks
\((P_j,Q_j)\), where every pair satisfies the criterion of
Lemma~\ref{lem:balanced-tree-component} with common weight \(S_j\).  Then there
exist \(\alpha\in\Cl_\lambda\) and \(\beta\in\Cl_\nu\) such that
\(\Gamma(\alpha,\beta)\) is the disjoint union of the prescribed tree
components and
\[
  \alpha\beta\in\Cl_{[S_1,\ldots,S_r]}.
\]
In particular, this product realizes Move A1.
\end{corollary}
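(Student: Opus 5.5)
The plan is to realize each prescribed block by an explicit tree on its own set of symbols, and then take a disjoint union. The statement to prove is Corollary~\ref{cor:forest-assembly}.

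\textbf{Step 1: one tree per block.} Fix a paired block \((P_j,Q_j)\). Its vertices carry prescribed degrees equal to the parts of \(\lambda\) and \(\nu\) assigned to the block. By hypothesis these degrees satisfy the balance and vertex-count equations of Lemma~\ref{lem:balanced-tree-component} with weight \(S_j\). That lemma therefore supplies a bipartite tree \(T_j\) with exactly these vertex degrees and with \(S_j\) edges.

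\textbf{Step 2: label the edges and define \(\alpha,\beta\).} The trees \(T_1,\ldots,T_r\) together have \(\sum_j S_j\) edges. Since the blocks partition the parts of \(\lambda\), we get \(\sum_j S_j=\sum_j\sum_{v\in P_j}a_v=d\). Label the edges of \(\bigsqcup_j T_j\) bijectively by \(\{1,\ldots,d\}\). At each black vertex \(v\), choose a cyclic ordering of its incident edge labels and let \(\alpha\) act on them as that cycle. Do the same at each white vertex to define \(\beta\).

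\textbf{Step 3: check cycle types and the incidence graph.} Each label lies on exactly one black and one white vertex, so \(\alpha\) and \(\beta\) are permutations of \(\{1,\ldots,d\}\). The cycle lengths of \(\alpha\) are the black degrees, namely the parts of \(\lambda\) with multiplicity, so \(\alpha\in\Cl_\lambda\). Likewise \(\beta\in\Cl_\nu\). By construction, \(\Gamma(\alpha,\beta)\) in the sense of Definition~\ref{def:incidence-graph} is exactly \(\bigsqcup_j T_j\): its black and white vertices are the cycles we placed, and each edge \(e_x\) joins the two cycles containing \(x\).

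\textbf{Step 4: apply the forest criterion.} The graph \(\Gamma(\alpha,\beta)\) is a forest with components \(T_j\) of weights \(S_j\). Proposition~\ref{prop:incidence-forest} then gives two conclusions. First, \(\alpha\beta\) has cycle type \([S_1,\ldots,S_r]\). Second, \(b(\alpha\beta)=b(\lambda)+b(\nu)\). Because \(\Cl_{[S_1,\ldots,S_r]}\) is a single conjugacy class, conjugating the pair \((\alpha,\beta)\) shows that \(\Cl_{[S_1,\ldots,S_r]}\subset\Cl_\lambda\Cl_\nu\). Together with the defect equality, this is exactly a Move~A1 output.

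All steps are routine. The only point needing care is Step~3: the identification of \(\Gamma(\alpha,\beta)\) with \(\bigsqcup_j T_j\) must hold verbatim, including fixed points, which become degree-one vertices.
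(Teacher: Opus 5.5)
Your proposal is correct and follows the paper's proof: build one tree per block via Lemma~\ref{lem:balanced-tree-component}, label the edges by distinct letters, read cyclic orders at the vertices to define \(\alpha\) and \(\beta\), and conclude with Proposition~\ref{prop:incidence-forest}. You additionally spell out the edge count \(\sum_j S_j=d\), the identification of \(\Gamma(\alpha,\beta)\) with the disjoint union of the trees, and the conjugation step giving \(\Cl_{[S_1,\ldots,S_r]}\subset\Cl_\lambda\Cl_\nu\); the paper leaves these checks implicit.
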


\begin{proof}
Choose a bipartite tree for each block by
Lemma~\ref{lem:balanced-tree-component}, label all their edges by distinct elements
of \(\Omega\), and choose cyclic orders on the edges incident to every vertex.
Reading these cyclic orders defines permutations \(\alpha\) and \(\beta\) of
the required cycle types.  The conclusion follows from
Proposition~\ref{prop:incidence-forest}.
\end{proof}

\section{Reduction of two fixed-point branches}
\label{sec:fixed-fixed}

This section treats A1-small pairs in which both branch partitions have fixed
points.  In the range where the defect sum is at most \(d-3\), we use incidence
forests to construct outputs of Move A1 whose gcd and lcm can be controlled.
The first subsection establishes the required flexibility and isolates the
only possible obstruction when the relevant lcm is \(6\).  The second applies
these constructions to obtain a Zieve-admissible reduced datum and then
combines this result with Move A2 and the reductions producing \([d]\),
\([d-1,1]\), or a partition with two parts to identify the remaining
fixed/fixed-point-free data, formalized in
Proposition~\ref{prop:standard-residual} as mixed residual data.

\subsection[Forest flexibility and the t=6 obstruction]
  {Forest flexibility and the \texorpdfstring{\(t=6\)}{t=6} obstruction}
\label{subsec:forest-flexibility}
\leavevmode\par
\medskip
\noindent\textbf{Common fixed-edge setup.}\par\nopagebreak
\smallskip
Throughout this subsection, reserve one fixed point from each of the two
partitions and write
\[
  \lambda=[c_1,\ldots,c_a,1^{x+1}],\qquad
  \nu=[e_1,\ldots,e_b,1^{y+1}],
\]
where \(c_i,e_j\ge2\).  The two reserved fixed points form an isolated
incidence edge, while \(x\) and \(y\) are the numbers of fixed leaves that
remain on the two sides.  Put
\[
  C=\sum_{i=1}^a c_i,\qquad
  E=\sum_{j=1}^b e_j,\qquad
  \delta_j=e_j-1.
\]
Then
\[
  d=C+x+1=E+y+1,\qquad
  b(\lambda)=C-a,\qquad
  b(\nu)=E-b=\sum_{j=1}^b\delta_j.
\]
If the pair satisfies the stronger inequality
\[
  b(\lambda)+b(\nu)\le d-3,
\]
then substituting the two expressions for \(d\) gives
\begin{equation}\label{eq:fixed-fixed-strict-bounds}
  \sum_{j=1}^b\delta_j\le x+a-2,
  \qquad
  C-a\le y+b-2.
\end{equation}

\begin{lemma}[Fixed/fixed forest flexibility]\label{lem:fixed-fixed-flex}
Let \(\lambda,\nu\vdash d\) be nontrivial partitions, each having a fixed point,
and suppose
\[
  b(\lambda)+b(\nu)\le d-3.
\]
For any prescribed \(t\in\{2,3,4,6\}\), Move A1 can be realized with an output
\(M\) such that \(g(M)=1\) and
\[
  m(M)\ne t.
\]
If no value \(t\) is prescribed, there is always an output of Move A1 with a part
\(1\), hence with \(g(M)=1\).
\end{lemma}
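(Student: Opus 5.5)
The unprescribed assertion is immediate from the common fixed-edge setup. Join the two reserved fixed points by an isolated edge. This leaves the partitions \(\lambda'=[c_1,\ldots,c_a,1^x]\) and \(\nu'=[e_1,\ldots,e_b,1^y]\) of \(d'=d-1\). Their defects are unchanged, so \(b(\lambda')+b(\nu')\le d-3\le d'-1\). Corollary~\ref{cor:forest-completion} therefore completes the isolated edge to an incidence forest, and by Proposition~\ref{prop:incidence-forest} the output \(M\) has a part \(1\), hence \(g(M)=1\). The same isolated edge will be kept in every construction below, so \(g(M)=1\) is automatic throughout. The real content is to control \(m(M)\), namely to exhibit at least two admissible outputs with different lcms, so that one of them avoids the prescribed \(t\).

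\textbf{Step 1: a one-big-tree output.} First I would try to put all nontrivial cycles into a single tree component. Suppose that component contains all \(a\) nontrivial black and \(b\) nontrivial white vertices, together with \(p\) black and \(q\) white fixed leaves. Lemma~\ref{lem:balanced-tree-component} forces
\[
  S=b(\lambda)+b(\nu)+1,\qquad p=b(\nu)-a+1,\qquad q=b(\lambda)-b+1.
\]
By \eqref{eq:fixed-fixed-strict-bounds} we get \(p\le x-1\) and \(q\le y-1\). The identity \(C+x=E+y\) then shows that the leftover fixed leaves \(x-p=y-q\ge1\) pair off into isolated edges. If \(p,q\ge0\), this gives \(M=[S,1^{d-S}]\) with \(m(M)=S\). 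If \(p<0\) (or \(q<0\)), I would instead split the nontrivial vertices into several balanced blocks, for instance stars consisting of one long cycle \(c_i\) surrounded by short white cycles and leaves. The aim is to keep every block balanced while spending at most \(x\), \(y\) leaves, which the inequalities \eqref{eq:fixed-fixed-strict-bounds} should permit.

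\textbf{Step 2: a second output with a different lcm.} If the output of Step~1 already has \(m(M)\ne t\), we are done. Otherwise I would modify it locally. The first attempt is to detach a sub-star from the big tree, say one white cycle \(e_j\) together with \(e_j\) black fixed leaves. This is balanced, with weight \(e_j\), and when removed leaves a balanced tree of weight \(S-e_j\). Alternatively one can trade a leaf pair between two components, shifting the weights \((S_1,S_2)\) to \((S_1+1,S_2-1)\), which is legal as long as spare leaves exist on both colours. The resulting lcms are \(S\), \(\lcm(e_j,S-e_j)\), \(\lcm(S_1+1,S_2-1)\), and so on. For \(t\in\{2,3,4\}\) at least one of these differs from \(t\) once \(S\ge5\). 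The small cases \(S\le4\) leave only finitely many shapes of \(\lambda,\nu\), which I would check directly using the slack guaranteed by the bound \(\le d-3\).

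\textbf{Main obstacle.} I expect the case \(t=6\) in the sparse regime to be the hardest, that is, when \(x\) or \(y\) is small so that few spare leaves are available for rebalancing. There the two natural outputs can both have lcm \(6\); typical examples are component weight multisets \(\{6\}\) and \(\{2,3\}\), or \(\{3,2,1\}\). For this case I would try first to route a single fixed leaf to make the weights \(\{5,1\}\) or \(\{4,2\}\) instead, and otherwise enumerate the configurations in which every rebalancing keeps all weights in \(\{1,2,3,6\}\). I would then show that these configurations force \(b(\lambda)+b(\nu)\ge d-2\), contradicting the hypothesis; this is the ``\(t=6\) obstruction'' the subsection isolates.
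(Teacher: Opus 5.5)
\textbf{There is a genuine gap.} You reserve the two fixed points as an isolated edge, so that \(g(M)=1\) is automatic, and then vary the rest of the forest to steer \(m(M)\) away from \(t\). This is also the paper's plan in the generic case. But your claim that the isolated edge can be kept in \emph{every} construction is false, and your proposed way of disposing of the hard case cannot work.

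Take \(d=2s+1\) with \(s\ge4\), \(\lambda=[3,1^{d-3}]\), \(\nu=[2^s,1]\). Then \(b(\lambda)+b(\nu)=s+2\le d-3\), so the hypothesis holds. After reserving the isolated edge, the white side consists of \(s\) vertices of degree \(2\) and no leaves. Lemma~\ref{lem:balanced-tree-component} then forces every component:
\begin{itemize}
\item the component containing the \(3\)-vertex must contain exactly three white \(2\)-vertices and three black leaves, so it has weight \(6\);
\item every other component is a star of weight \(2\).
\end{itemize}
Hence every forest containing the isolated edge has output \([6,2^{s-3},1]\), with lcm \(6\), and no rerouting of leaves inside such forests can change this.

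The same happens in two further families. For \(\lambda=[3^r,1]\), \(\nu=[2,1^{3r-1}]\) with \(r\ge3\), the forced output is \([6,3^{r-2},1]\). For \(t=4\), with \(\lambda=[2^r,1]\), \(\nu=[2,1^{2r-1}]\), \(r\ge3\), the forced output is \([4,2^{r-2},1]\); here your big tree has \(p=2-r<0\).

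These are infinite families satisfying \(b(\lambda)+b(\nu)\le d-3\). So your intended last step, showing that the configurations where every rebalancing keeps all weights in \(\{1,2,3,6\}\) force \(b(\lambda)+b(\nu)\ge d-2\), would be proving a false statement. Your remark that the small-\(S\) cases leave only finitely many shapes is wrong for the same reason.

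\textbf{The missing idea} is that in exactly these sparse configurations you must give up the isolated edge. You then obtain \(g(M)=1\) from coprime parts rather than from a part \(1\), by absorbing the reserved fixed points into a small tree:
\begin{itemize}
\item \(\{3,1,1\}\) against \(\{2,2,1\}\) gives \([5,2^{s-2}]\), with lcm \(10\);
\item \(\{3,1\}\) against \(\{2,1,1\}\) gives \([4,3^{r-1}]\), with lcm \(12\);
\item \(\{2,1\}\) against \(\{2,1\}\) gives \([3,2^{r-1}]\), with lcm \(6\ne4\).
\end{itemize}
This is what the paper does. It first proves, in Lemmas~\ref{lem:t6-long-cycles}--\ref{lem:t6-check} and in the analogous \(t=4\) and all-\(2\) analyses, that outside these families there is an available \emph{escape component}: one extra tree, coexisting with the isolated edge, whose weight does not divide \(t\). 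Corollary~\ref{cor:forest-completion} then completes everything else.

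Your Steps~1--2 (one big tree, then detaching stars or trading leaf pairs) would need to be replaced by an equally explicit analysis. The leaf-availability constraints \(p\le x\), \(q\le y\) are exactly where the argument is delicate, and they are what produce the three exceptional families above.
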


The proof of Lemma~\ref{lem:fixed-fixed-flex} is deferred until after a sequence
of auxiliary lemmas.  Lemma~\ref{lem:t6-escape} gives the completion principle
for an additional component whose weight does not divide \(6\).
Lemmas~\ref{lem:t6-long-cycles}--\ref{lem:t6-mixed-two-three} analyze the
possible cycle lengths and multiplicities, and Lemma~\ref{lem:t6-check}
assembles their conclusions for the case \(t=6\).  We then return to the proof
of Lemma~\ref{lem:fixed-fixed-flex} and treat the remaining values of \(t\).

For this analysis, a \emph{star} is an incidence-tree component consisting of
one vertex corresponding to a cycle of length at least \(2\) on one side and
fixed leaves on the other.  A \emph{two-sided component} is an incidence-tree
component containing vertices corresponding to cycles of length at least
\(2\) of both colours.  An incidence-tree component is \emph{available} if
every nontrivial vertex that it uses occurs among the remaining vertices and
it uses at most \(x\) black and \(y\) white fixed leaves.  Any available
incidence-tree component of weight \(S\nmid6\), including a star or a
two-sided component, is called an \emph{escape component}.

\begin{lemma}[Escape-component completion]\label{lem:t6-escape}
Adopt the common fixed-edge setup above and assume
\(b(\lambda)+b(\nu)\le d-3\).  If an escape component exists, then it extends,
together with the reserved isolated edge, to an incidence forest realizing a
Move A1 output whose lcm is not \(6\).
\end{lemma}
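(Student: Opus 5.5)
The plan is to apply the completion part of Corollary~\ref{cor:forest-completion} to whatever remains after the escape component \(K\) and the reserved isolated edge are removed, and then read off the lcm. The output will contain the part \(1\) from the reserved edge and the part \(S=w(K)\) with \(S\nmid 6\). Hence \(m(M)\) is a multiple of \(S\) and cannot equal \(6\). Since the reserved edge contributes a part \(1\), we also get \(g(M)=1\) for free.

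\textbf{Step 1: the leftover pair.} Let \(K\) use nontrivial black vertices of total length \(C_K\) with \(p\) such vertices, \(x_K\le x\) black fixed leaves, nontrivial white vertices of total length \(E_K\) with \(q\) such vertices, and \(y_K\le y\) white fixed leaves. By Lemma~\ref{lem:balanced-tree-component},
\[
C_K+x_K=E_K+y_K=S,\qquad p+q+x_K+y_K=S+1.
\]
Remove \(K\) and the reserved edge. The leftover partitions \(\lambda',\nu'\) have common degree \(d'=d-S-1\). Their defects are
\[
b(\lambda')=b(\lambda)-(C_K-p),\qquad b(\nu')=b(\nu)-(E_K-q).
\]
The defect of \(K\) is
\[
(C_K-p)+(E_K-q)=2S-(S+1-x_K-y_K)-x_K-y_K=S-1.
\]
This is the tree identity: a tree component of weight \(S\) absorbs defect exactly \(S-1\). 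Therefore
\[
b(\lambda')+b(\nu')=b(\lambda)+b(\nu)-(S-1)\le d-3-S+1=d'-1.
\]

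\textbf{Step 2: completion.} If \(d'>0\), Corollary~\ref{cor:forest-completion} completes the prescribed components, namely \(K\) and the isolated edge, to an incidence forest whose product realizes Move A1. If \(d'=0\), nothing remains, but that would force \(b(\lambda)+b(\nu)=S-1=d-2\), contradicting the hypothesis. So \(d'\ge1\) in fact. If the leftover partitions are trivial, the completion is just isolated edges, which the corollary also covers.

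The leftover symbols consist of whole parts, including the unused fixed leaves. Their counts are consistent because the hypothesis that \(K\) is available guarantees \(x_K\le x\) and \(y_K\le y\). By Proposition~\ref{prop:incidence-forest}, the cycle type of the product is the multiset of component weights, which contains \(1\) and \(S\).

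\textbf{Step 3: the lcm.} Since \(S\mid m(M)\) and \(S\nmid 6\), we get \(m(M)\ne 6\).

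The main point needing care is Step 1's defect bookkeeping, that is, checking that the leftover pair still satisfies the A1-small inequality \(b(\lambda')+b(\nu')\le d'-1\). It holds exactly because a tree component of weight \(S\) carries defect \(S-1\) and the reserved edge carries defect \(0\).
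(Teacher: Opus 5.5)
Your proof is correct and follows the paper's argument. You reserve the escape component and the isolated edge, check via the tree identity (a component of weight \(S\) carries defect exactly \(S-1\)) that the leftover pair satisfies \(b(\lambda')+b(\nu')\le d'-1\), complete with Corollary~\ref{cor:forest-completion}, and conclude from \(S\mid m(M)\), \(S\nmid 6\). The only differences are cosmetic: you show that \(d'=0\) cannot occur, whereas the paper simply notes that in that case the reserved components already cover all letters; and you use \(p,q\) for vertex counts, whereas the paper uses them for fixed-leaf counts.
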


\begin{proof}
Use the common fixed-edge setup above.  Let \(I\) and \(J\) be the sets of
nontrivial black and white vertices used by an escape component.  At least one
of these sets is nonempty; both are nonempty for a two-sided component, whereas
exactly one is empty for a star.  Put
\[
  C_I=\sum_{c\in I}c,\qquad E_J=\sum_{e\in J}e,\qquad n=|I|+|J|.
\]
With empty sums interpreted as zero, Lemma~\ref{lem:balanced-tree-component}
shows that the component
\[
  I\cup\{p\text{ black fixed leaves}\},\qquad
  J\cup\{q\text{ white fixed leaves}\}
\]
has equal weight and is realizable as an incidence-tree
component precisely when
\[
  S=C_I+E_J-n+1,\qquad p=E_J-n+1,\qquad q=C_I-n+1.
\]
The inequalities \(0\le p\le x\) and \(0\le q\le y\) say
exactly that this component is available.
If an escape component has been chosen, reserve it together with the isolated
fixed edge.  The remaining
degree is \(d-1-S\), while the remaining defect is at
most
\[
  b(\lambda)+b(\nu)-(S-1)\le(d-3)-(S-1)=(d-1-S)-1.
\]
If the remaining degree is positive,
Corollary~\ref{cor:forest-completion} completes the remaining vertices by a
disjoint incidence forest; if it is zero, the reserved components already
cover all letters.  In either case we obtain a Move A1 output whose lcm is not
\(6\).
\end{proof}

For the rest of this subsection, an \emph{isolated-edge \(t=6\) failure} means
that every Move A1 output obtained from an incidence forest containing the
reserved isolated edge has lcm \(6\).  By Lemma~\ref{lem:t6-escape}, an
isolated-edge \(t=6\) failure has no escape component.

\medskip
\noindent\textbf{Basic component tests.}
We record four elementary component tests.  In each item the demands on \(x,y\)
are obtained by substituting the indicated supports into the formula for
\(p,q,S\).  If the demand holds and \(S\nmid6\), we have
an escape component; therefore
failure negates at least one displayed demand.
\begin{enumerate}[label=(\alph*)]
\item A black star \(\{c\}\) against fixed leaves has demand \(y\ge c\) and
weight \(S=c\).  Hence, if \(c\notin\{2,3,6\}\), failure forces \(y<c\).
\item A bridge \(\{c\}\) against \(\{e\}\) has
\[
  p=e-1,\qquad q=c-1,\qquad S=c+e-1.
\]
  Thus, when \(c+e-1\notin\{2,3,6\}\), failure forces
  \[
  x<e-1\quad\text{or}\quad y<c-1.
  \]
\item A three-support component \(\{c\}\) against \(\{e_1,e_2\}\) has
\[
  {p=e_1+e_2-2},\qquad q=c-2,\qquad S=c+e_1+e_2-2.
\]
  Thus, when \(S\nmid6\), failure forces
  \[
  x<e_1+e_2-2\quad\text{or}\quad y<c-2.
  \]
\item The special small supports used below have the following values:
\[
\begin{array}{c|c|c}
\text{support} & (p,q) & S\\
\hline
\{4\}\text{ against }\{2\} & (1,3) & 5\\
\{4\}\text{ against }\{3\} & (2,3) & 6\\
\{3\}\text{ against }\{2\} & (1,2) & 4\\
\{3\}\text{ against }\{2,2\} & (2,1) & 5\\
\{3,3\}\text{ against }\{2\} & (0,4) & 6\\
\{2\}\text{ against }\{2\} & (1,1) & 3.
\end{array}
\]
\end{enumerate}

\begin{lemma}[Exclusion of long cycles]\label{lem:t6-long-cycles}
Let \(\lambda,\nu\vdash d\) be nontrivial partitions, each having a fixed
point, and suppose that
\[
  b(\lambda)+b(\nu)\le d-3.
\]
Adopt the common fixed-edge setup above.
If either \(\lambda\) or \(\nu\) has a cycle of length at
least \(4\), then there is an escape component.  Consequently, in an
isolated-edge \(t=6\) failure, every cycle has length at most \(3\).
\end{lemma}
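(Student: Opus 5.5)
The plan is to exhibit, whenever some cycle has length \(c\ge4\), a small explicit family of available components through that cycle, and to show that two members of the family with different weights are available. Since a weight can equal \(6\) for at most one of them, one of the two is an escape component. By the symmetry \(\lambda\leftrightarrow\nu\) (which swaps \(x\leftrightarrow y\), \(a\leftrightarrow b\) and the two inequalities in \eqref{eq:fixed-fixed-strict-bounds}), we may assume the long cycle is a black vertex \(c=c_1\ge4\). The second sentence of the lemma then follows from the first together with Lemma~\ref{lem:t6-escape}.

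\textbf{The family of components.} Consider components consisting of the black vertex \(\{c\}\), a set \(J\) of white nontrivial vertices, and fixed leaves. Specializing the formula in the proof of Lemma~\ref{lem:t6-escape} with \(I=\{c\}\) and \(n=|J|+1\) gives
\[
  p=\sum_{e\in J}(e-1)=\delta_J,\qquad q=c-|J|,\qquad S=c+\delta_J .
\]
So \(J\) gives an available component exactly when \(|J|\le c\), \(c-|J|\le y\) and \(\delta_J\le x\). Every such weight is at least \(c\ge4\), and the only divisor of \(6\) that is at least \(4\) is \(6\) itself. Hence any available member with \(S\ne6\) is an escape component. Order the white cycles so that \(\delta_1\le\delta_2\le\cdots\le\delta_b\), and let \(J_j=\{e_1,\ldots,e_j\}\). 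The weights \(c+\delta_{J_j}\) are strictly increasing in \(j\), because each \(\delta\ge1\). Therefore it suffices to find two consecutive indices \(j_0,\,j_0+1\) for which \(J_{j_0}\) and \(J_{j_0+1}\) are both available.

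\textbf{Choosing \(j_0\).} Take \(j_0=\max(0,c-y)\), the least size with \(q\le y\). From the second inequality of \eqref{eq:fixed-fixed-strict-bounds},
\[
  y\ge C-a-b+2\ge(c-1)-b+2,
\]
so \(c-y\le b-1\). Hence \(j_0+1\le b\), and also \(j_0+1\le c\), so both \(J_{j_0}\) and \(J_{j_0+1}\) exist and satisfy \(0\le q\le y\). If \(y\ge c\) we have \(j_0=0\), so the black star (test (a)) and the bridge with the smallest white cycle (test (b)) are the two candidates.

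\textbf{Main obstacle: \(\delta_{J_{j_0+1}}\le x\).} This \(p\)-bound is the step I expect to be hardest. Because the \(\delta\)'s are sorted, the smallest \(j_0+1\) of them satisfy \(\delta_{J_{j_0+1}}\le\frac{j_0+1}{b}\sum_j\delta_j\). I would combine this with the first inequality \(\sum\delta_j\le x+a-2\), and with the sum of the two inequalities in \eqref{eq:fixed-fixed-strict-bounds}, to absorb the term \(a\).

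The case I expect to resist is the following. When \(a\) is large, the contributions \(c_i-1\ge1\) of the other black cycles enlarge \(y\) through the second inequality, which makes \(j_0\) small. When \(j_0\) is small, only one or two white \(\delta\)'s need to fit under \(x\).

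If the averaging estimate still fails in an edge case, I would fall back on varying the construction:
\begin{itemize}[leftmargin=*]
\item replace one white cycle of \(J\) by another, which changes \(S\) while keeping \(|J|\) fixed;
\item or use the three-support test (c), and the table in (d) for small values such as \(c=4\) against \(\{2\}\) or \(\{3\}\), to produce a second weight different from \(6\).
\end{itemize}
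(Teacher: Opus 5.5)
There is a genuine gap. Restricting to components $\{c\}\cup J$ with a single black nontrivial vertex cannot handle the case $c=6$, $y\ge 6$, $\delta_1>x$, and none of your fallbacks reaches it.

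In that case $j_0=0$, and the star $J_0$ has weight $6$. Every nonempty $J$ has $p=\delta_J\ge\delta_1>x$, so no other member of your family is available. Swapping white cycles or invoking test (c) keeps $J$ nonempty. Table (d) only involves supports with cycles of length at most $4$, not the $6$-vertex.

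The case does occur. Take $d=11$, $\lambda=[6,2,2,1]$, $\nu=[2,1^9]$. Then $b(\lambda)+b(\nu)=8=d-3$, with $x=0$, $y=8$, $a=3$, $b=1$, $\delta_1=1$. An escape exists, for example $\{6,2\}$ against $\{2\}$ with $(p,q,S)=(0,6,8)$, but it needs a second black nontrivial vertex.

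That is the missing idea: each added black nontrivial vertex lowers the black-leaf demand $p=E_J-n+1$ by one. The paper adds $h=\delta_1-x$ black vertices of smallest defects. It gets $h\le a-2$ from the first inequality in \eqref{eq:fixed-fixed-strict-bounds}. It then checks $q=\Gamma+5\le y$ from the second inequality together with $\Gamma'\ge b$.

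The rest of your plan can be saved, but the $p$-bound should not come from averaging. Averaging fails for small $x$ (e.g.\ $x=0$, $a\ge 3$). Instead, when $y<c$:
\begin{itemize}
\item Use $C-a\ge c+a-2$, not merely $C-a\ge c-1$, to get $b\ge a+(c-y)$.
\item Then the tail estimate $x-\delta_{J_k}\ge\sum_{j>k}\delta_j-a+2\ge(b-k)-a+2$ shows that $J_{c-y}$ is available, and so is $J_{c-y+1}$ when $y\ge 1$.
\end{itemize}
This is how the paper treats $c=5$, $y=4$, $\delta_1=1$ and the case $c=4$.

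Your assertion $j_0+1\le c$ fails when $y=0$. This is harmless, since then $S=c+\delta_{J_c}\ge 2c\ge 8$ already. If $y\ge c$ and $c\ne 6$, the star alone is an escape, so the bridge is unnecessary there. Only $c=6$ needs the augmented component.
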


\begin{proof}
First suppose that a cycle has length at least \(5\).
Suppose, by symmetry, that a black
vertex has length \(c\ge5\).  Let
\(\delta_j=e_j-1\) be the white defects, ordered increasingly.  We use the
two inequalities in
\eqref{eq:fixed-fixed-strict-bounds}.
If \(c\ne6\) and \(y\ge c\), the star itself has
weight \(c\nmid6\).
Thus only two subcases remain: \(y<c\), or \(c=6\) and \(y\ge6\).

\smallskip
\noindent\textbf{The case \(y<c\).}
Take
\[
  k=c-y,
\]
except when \(c=5\), \(y=4\), and \(\delta_1=1\), where take \(k=2\).  The
choice satisfies \(1\le k\le b\): the inequalities
\(C-a\ge c+a-2\) and
\eqref{eq:fixed-fixed-strict-bounds} give
\[
  b\ge a+c-y,
\]
and the exceptional \(k=2\) case has \(b\ge a+1\ge2\).

Use the component consisting of this \(c\)-vertex and the \(k\) white vertices
with smallest defects.  It has
\[
  q=c-k\le y,\qquad
  p=\sum_{j=1}^k\delta_j,\qquad
  S=c+\sum_{j=1}^k\delta_j.
\]
The first inequality in
\eqref{eq:fixed-fixed-strict-bounds} gives
\[
  x-p\ge \sum_{j>k}\delta_j-a+2.
\]
If \(k=c-y\), then \(b-k\ge a\), so the right side is at least \(2\).  In the
exceptional \(c=5,y=4,\delta_1=1,k=2\) case, \(b-k\ge a-1\), so the right side
is at least \(1\).  These cases give available components.
Their weights do not divide \(6\): for \(c\ge7\) the weight is at least \(8\); for \(c=6\) with \(y<c\)
we have \(S=6+p\ge7\); while for \(c=5\) the only way to get
weight \(6\) is precisely the exceptional
\(y=4,\delta_1=1\) situation, where we
chose \(k=2\) and the weight is at least \(7\).

\smallskip
\noindent\textbf{The case \(c=6\) and \(y\ge6\).}
Take \(k=1\), so \(p=\delta_1\).  If \(p\le x\), the bridge is available and
has weight \(6+\delta_1\nmid6\).  If \(\delta_1>x\), put \(h=\delta_1-x\).  The first inequality
in \eqref{eq:fixed-fixed-strict-bounds} gives
\[
  h+\sum_{j>1}\delta_j\le a-2,
\]
so \(h\le a-2\).  Choose \(h\) further black nontrivial vertices, with smallest
defects \(\gamma_1,\ldots,\gamma_h\), and write
\(\Gamma=\gamma_1+\cdots+\gamma_h\).  The component consisting of the
length-\(6\) vertex, these \(h\) black vertices, and the first white vertex has
\[
  p=x,\qquad q=\Gamma+5,\qquad S=6+\delta_1+\Gamma.
\]
Let \(\Gamma'\) be the sum of the defects of the black nontrivial vertices not
used in this component.  The second inequality in
\eqref{eq:fixed-fixed-strict-bounds} gives
\begin{equation}\label{eq:t6-augmented-bound}
  5+\Gamma+\Gamma'\le y+b-2.
\end{equation}
Moreover \(h+\sum_{j>1}\delta_j\le a-2\) and
\(\sum_{j>1}\delta_j\ge b-1\) imply \(a-1-h\ge b\), hence
\(\Gamma'\ge b\).  From
\eqref{eq:t6-augmented-bound} we get \(y\ge\Gamma+7\), so
\[
  q=\Gamma+5\le y.
\]
Thus the augmented component is available, and
its weight \(S\ge7\) does not divide \(6\).
This contradicts failure.  Hence an isolated-edge \(t=6\)
failure contains no cycle of length at least \(5\).

\medskip
\noindent\textbf{It remains to exclude cycles of length \(4\).}
Suppose a black \(4\)-vertex exists and order the
white defects \(\delta_j=e_j-1\).  The star of weight \(4\) is
an escape component if
\(y\ge4\), so failure gives \(y\le3\).  Put \(k=4-y\).  Since
\(C-a\ge a+2\), the second inequality in
\eqref{eq:fixed-fixed-strict-bounds} gives \(b\ge a+k\), hence
\(1\le k\le b\).  Use the component containing the \(4\)-vertex and the \(k\)
white vertices of smallest defects.  Substituting this support into the
component formula gives
\[
  q=4-k=y,\qquad p=\sum_{j=1}^k\delta_j,\qquad S=4+p,
\]
and
\[
  x-p\ge\sum_{j>k}\delta_j-a+2\ge2,
\]
because \(b-k\ge a\).  The component is therefore available.  Since \(p\ge k\),
we have \(S\ge5\).  If \(p\ne2\), then \(S\ne6\) and this is already an escape.
The only remaining value that must be excluded is therefore
\(S=6\), i.e. \(p=2\), with
exactly two possibilities.

\smallskip
\noindent\textbf{When \(k=1\) and \(\delta_1=2\).}
Use instead the two smallest white vertices; this is possible because
\(b\ge a+1\ge2\).  For this new component,
\[
  q=2\le y,\qquad p'=\delta_1+\delta_2,\qquad S'=4+p'\ge7,
\]
and the first inequality in
\eqref{eq:fixed-fixed-strict-bounds} gives
\[
  x-p'\ge\sum_{j>2}\delta_j-a+2\ge1,
\]
because there are \(b-2\ge a-1\) remaining white defects.

\smallskip
\noindent\textbf{When \(k=2\) and \(\delta_1=\delta_2=1\).}
Use the three smallest white vertices; here \(b\ge a+2\ge3\).  Now
\[
  q=1\le y,\qquad p'=\delta_1+\delta_2+\delta_3,\qquad S'=4+p'\ge7,
\]
and
\[
  x-p'\ge\sum_{j>3}\delta_j-a+2\ge1,
\]
because \(b-3\ge a-1\).  Thus a length \(4\) also forces
an escape component.
\end{proof}

For the remainder of the \(t=6\) analysis, suppose that every nontrivial cycle
has length \(2\) or \(3\), and define the multiplicities
\[
  a_j=\#\{i:c_i=j\},\qquad b_j=\#\{i:e_i=j\}
  \qquad (j=2,3).
\]
Thus
\[
  \lambda_{\rm nt}=[2^{a_2},3^{a_3}],\qquad
  \nu_{\rm nt}=[2^{b_2},3^{b_3}],
  \qquad a=a_2+a_3,\quad b=b_2+b_3,
\]
where the subscript means that fixed leaves have been omitted.

\begin{lemma}[Uniform \(2\)- and \(3\)-cycle configurations]
\label{lem:t6-uniform-two-three}
Let \(\lambda,\nu\vdash d\) be nontrivial partitions, each having a fixed
point, and suppose that
\[
  b(\lambda)+b(\nu)\le d-3.
\]
Suppose further that every part of \(\lambda\) and \(\nu\) greater than \(1\)
is either \(2\) or \(3\).
Adopt the common fixed-edge setup and the multiplicity notation above.
If \(a_3=b_3=0\), then the two partitions do not form an isolated-edge
\(t=6\) failure.  If \(a_2=b_2=0\), then there is an escape component.
\end{lemma}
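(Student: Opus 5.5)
We would exhibit escape components directly, using the tree formula from the proof of Lemma~\ref{lem:t6-escape}. A component using \(i\) nontrivial black vertices and \(j\) nontrivial white vertices, all of one common length \(r\in\{2,3\}\), has
\[
  p=rj-(i+j)+1,\qquad q=ri-(i+j)+1,\qquad S=r(i+j)-(i+j)+1 .
\]
It is available when \(0\le p\le x\), \(0\le q\le y\), \(i\le a\) and \(j\le b\).
In each case we specialize \eqref{eq:fixed-fixed-strict-bounds}. Since every \(\delta_j\) equals \(r-1\) and \(C=ra\), the two inequalities become lower bounds on \(x\) and \(y\) in terms of \(a\) and \(b\). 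Nontriviality of \(\lambda\) and \(\nu\) gives \(a,b\ge1\).

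\textbf{The case \(a_2=b_2=0\).} Here \(r=3\), so \(p=2j-i+1\), \(q=2i-j+1\) and \(S=2(i+j)+1\). The bounds read
\[
  x\ge 2b-a+2,\qquad y\ge 2a-b+2 .
\]
We take \(j=1\) and choose \(i\) according to \(x\).
\begin{itemize}
\item If \(x\ge2\), take \(i=1\). This gives \(p=q=2\) and \(S=5\), and \(y\ge2\) follows from \(y\ge 2a-b+2\) together with the symmetric argument.
\item If \(x\le1\), take \(i=3-x\in\{2,3\}\). This gives \(p=x\), \(q=6-2x\) and \(S=9-2x\in\{7,9\}\).
\end{itemize}
Availability in the second subcase needs two checks.
\begin{itemize}
\item \(a\ge i\): this holds because \(a\ge 2b+2-x\ge 4-x\).
\item \(y\ge q\): eliminating \(b\le(a+x-2)/2\) gives \(y\ge(3a-x+6)/2\ge(15-4x)/2\ge 6-2x\).
\end{itemize}
The symmetric roles cover the remaining subcase \(y<2\) with \(x\ge2\). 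The resulting weight is \(5\), \(7\) or \(9\), none of which divides \(6\), so it is an escape component. This is the easy half.

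\textbf{The case \(a_3=b_3=0\).} Here \(r=2\), so \(p=j-i+1\), \(q=i-j+1\) and \(S=i+j+1\). The bounds read
\[
  x\ge b-a+2,\qquad y\ge a-b+2 .
\]
By the black/white symmetry assume \(a\ge b\); then \(y\ge2\). We look for escapes in this order.
\begin{itemize}
\item \(i=2\), \(j=1\): \(S=4\), \(p=0\), \(q=2\). This is available whenever \(a\ge2\).
\item \(i=j=2\): \(S=5\). This is available if \(a,b\ge2\) and \(x\ge1\).
\item \(i=j=1\): \(S=3\).
\end{itemize}
The only configuration with no escape is \(a=b=1\). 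There \(x,y\ge2\), and we avoid lcm \(6\) directly: two stars of weight \(2\) use the \(2\)-vertices, and the remaining fixed points give weight-\(1\) components. The output is then \([2,2,1^{d-4}]\), whose lcm is \(2\).

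Since the statement only claims ``no isolated-edge \(t=6\) failure'' in this case, it suffices to produce one forest containing the reserved edge whose output lcm is not \(6\). For the escape alternatives this is exactly what Lemma~\ref{lem:t6-escape} supplies.

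The main obstacle is the bookkeeping in this second case. One must confirm that the escape families above, together with the direct all-stars construction, cover every triple \((a,b,x,y)\) allowed by the bounds. If some corner (for instance \(x=0\) with \(a\) much larger than \(b\)) is missed, the fallback is to attach extra black \(2\)-vertices in a longer path, i.e.\ choose \(i=j+1\) with larger \(j\). This keeps \(p=0\) and makes the weight \(S=2j+2\), which can be taken to be \(4\) or \(8\).
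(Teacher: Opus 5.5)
Your proof is correct. In the all-$3$ case it is essentially the paper's argument, but in the all-$2$ case it takes a different and shorter route.

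\textbf{All-$2$ case.} The paper splits on $r=a_2-b_2$ and on $b_2$ into four subcases. In each it writes down a complete forest by hand: a single large component of weight $2a_2+1$ or $2b_2+2$, or a bridge plus a weight-$4$ piece, or weight-$5$ or weight-$4$ pieces followed by stars. Leftover leaves are paired into isolated edges. It does not produce an escape component there, and the all-$2$ case is excluded from the moreover clause of Lemma~\ref{lem:t6-check}.

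You instead observe that, with $a\ge b$, two black $2$-vertices joined through one white $2$-vertex give $(p,q,S)=(0,2,4)$. This component is available as soon as $a\ge2$, because $y\ge a-b+2\ge2$, and Lemma~\ref{lem:t6-escape} then completes it. The only remaining configuration is $a=b=1$. There every component containing a $2$-vertex has weight $2$ or $3$, so no escape exists, and your two stars give lcm $2$.

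\textbf{Comparison.} Your route gives a uniform argument. It also proves the stronger fact that the all-$2$ case has an escape component except for $[2,1^{d-2}]$ against $[2,1^{d-2}]$. The paper's route gives explicit forests with known output types, without relying on the completion step of Corollary~\ref{cor:forest-completion}.

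\textbf{Minor points.}
\begin{enumerate}
\item Your closing worry is unfounded. The $(i,j)=(2,1)$ bullet alone settles every $a\ge2$, including $x=0$ with $a\gg b$, so the $(2,2)$ bullet and the fallback are unnecessary.
\item The $(1,1)$ bridge with $S=3$ is not an escape at all, since $3\mid6$.
\item In the all-$3$ case your component is the paper's with $j=1$ in place of $j=b_3$. State the case split cleanly as $x\le1$, $y\le1$ (by symmetry), or $x,y\ge2$. The inequality $y\ge2a-b+2$ alone does not give $y\ge2$.
\end{enumerate}
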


\begin{proof}
First suppose that \(a_3=b_3=0\).  By symmetry assume that \(a_2\ge b_2\) and
put \(r=a_2-b_2\).  Since both branches are nontrivial, \(a_2,b_2\ge1\).  The
degree equality gives \(y=x+2r\), and the first bound in
\eqref{eq:fixed-fixed-strict-bounds} gives \(x\ge2-r\).

\noindent\emph{(i) \(r=0\).}
Then \(x=y\ge2\);
put all nontrivial vertices into one component and add one fixed leaf on each
side.  This has \(p=q=1\) and weight \(S=2a_2+1\), so the lcm is \(3\) when
\(a_2=1\) and otherwise contains a part not dividing \(6\).

\noindent\emph{(ii) \(r=1\).}
Then \(x\ge1\) and \(y=x+2\ge3\).  Putting all nontrivial vertices
into one component gives \(p=0,q=2,S=2b_2+2\).  This avoids lcm \(6\) unless
\(b_2=2\).  For \(b_2=2\), instead use one bridge of weight \(3\), with
\((p,q)=(1,1)\), and one component with two black \(2\)-vertices against one
white \(2\)-vertex, with \((p,q,S)=(0,2,4)\).  The total fixed-leaf demand is
\((1,3)\le(x,y)\), and the lcm is \(12\).

\noindent\emph{(iii) \(r\ge2\) and \(b_2\ne2\).}
Use the component with \(b_2+1\) black \(2\)-vertices
against all \(b_2\) white \(2\)-vertices.  It has \(p=0,q=2,S=2b_2+2\ne6\).  The
remaining \(r-1\) black \(2\)-vertices are stars; after the first component the
white side has \(y-2=x+2r-2\ge2(r-1)\) fixed leaves left, exactly enough for
these stars.

\noindent\emph{(iv) \(r\ge2\) and \(b_2=2\).}
If \(x\ge1\), use the weight-\(5\)
component with two \(2\)-vertices on each side; it has \(p=q=1\), and the
remaining \(r\) black \(2\)-vertices need \(2r\) white fixed leaves, while
\(y-1=x+2r-1\ge2r\).  If \(x=0\), use two weight-\(4\) components, each with two
  black \(2\)-vertices against one white \(2\)-vertex and \((p,q)=(0,2)\).  The
  remaining \(r-2\) black \(2\)-vertices need \(2(r-2)\) white fixed leaves, and
  \(y-4=2r-4\).
In each of these constructions, any unused fixed leaves occur in equal numbers
on the two sides and can be paired to form isolated incidence edges.  Hence the
listed components extend to an incidence forest using all vertices.
Thus every all-\(2\) configuration has an isolated-edge completion whose lcm
is not \(6\).

Now suppose that \(a_2=b_2=0\), so \(a_3,b_3\ge1\).  If \(x,y\ge2\), a bridge
between two \(3\)-vertices is an escape component of weight \(5\).  Otherwise,
by symmetry assume \(x\le1\).  The first bound in
\eqref{eq:fixed-fixed-strict-bounds}
becomes
\[
  2b_3\le a_3+x-2,
\]
so \(2b_3+1-x\le a_3-1\), and hence the required black \(3\)-vertices exist.
For the component using these \(2b_3+1-x\) black \(3\)-vertices and all
\(b_3\) white
\(3\)-vertices, the component formulas give \(p=x\) and
\[
  q=3b_3+3-2x,\qquad S=6b_3+3-2x.
\]
The degree equality gives \(y=x+3a_3-3b_3\), hence
\[
  y-q=3(a_3+x-2b_3-1)\ge3,
\]
again by \(2b_3\le a_3+x-2\).  Thus the component is available.  Its weight is
congruent to \(3\) modulo \(6\) if \(x=0\), and to \(1\) modulo \(6\) if
\(x=1\), so it is an escape component.
\end{proof}

\begin{lemma}[Mixed \(2/3\) classification]\label{lem:t6-mixed-two-three}
Let \(\lambda,\nu\vdash d\) be nontrivial partitions, each having a fixed
point, and suppose that
\[
  b(\lambda)+b(\nu)\le d-3.
\]
Suppose further that every part of \(\lambda\) and \(\nu\) greater than \(1\)
is either \(2\) or \(3\).
Adopt the common fixed-edge setup and the multiplicity notation above.
Suppose that both \(2\) and \(3\) occur among these parts.
If the two partitions form an isolated-edge \(t=6\) failure, then, up to
interchanging the two colours, they have one of the two sparse forms
\[
  d=2s+1,\qquad [3,1^{d-3}]\quad\text{against}\quad [2^s,1],
\]
or
\[
  d=3r+1,\qquad [3^r,1]\quad\text{against}\quad [2,1^{d-2}].
\]
Outside these two forms, there is an escape component.
\end{lemma}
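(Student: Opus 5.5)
Throughout, assume an isolated-edge \(t=6\) failure, so by Lemma~\ref{lem:t6-escape} no escape component exists, and by Lemma~\ref{lem:t6-uniform-two-three} neither partition pattern is uniform. The plan is to exploit the small-support table in item~(d) of the basic component tests, together with the two bounds \eqref{eq:fixed-fixed-strict-bounds}. In multiplicity notation these read
\[
  b_2+2b_3\le x+a_2+a_3-2,\qquad a_2+2a_3\le y+b_2+b_3-2,
\]
that is,
\[
  x\ge b_2+2b_3-a_2-a_3+2,\qquad y\ge a_2+2a_3-b_2-b_3+2 .
\]
Each of the available small components with weight not dividing \(6\) must be forbidden. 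The candidates are a \(3\)-vertex against a \(2\)-vertex (weight \(4\), demand \((1,2)\)), a \(3\) against \(\{2,2\}\) (weight \(5\), demand \((2,1)\)), a \(3\)--\(3\) bridge (weight \(5\), demand \((2,2)\)), \(2\)--\(2\) against \(2\)--\(2\) (weight \(5\)), and larger all-vertex components. Failure therefore imposes upper bounds on \(x,y\) whenever the relevant vertices exist, and we play these against the lower bounds above.

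First I would split by where the \(3\)'s and \(2\)'s sit. Up to colour swap, there are three cases: (A) some colour contains both a \(2\) and a \(3\); (B) black is all \(3\)'s and white is all \(2\)'s; (C) black contains a \(3\) and white contains a \(2\), possibly with extra parts. Opposite-colour \(3\)/\(2\) pairs occur in (B) and (C), and in (A) as soon as the other colour is nonempty. For a black \(3\) and a white \(2\), the weight-\(4\) bridge forces \(x=0\) or \(y\le1\). If \(b_2\ge2\), the weight-\(5\) component \(\{3\}\) against \(\{2,2\}\) forces \(x\le1\) or \(y=0\). Combining these with the lower bounds, the case \(x=0\) gives
\[
  a_2+a_3\ge b_2+2b_3+2,
\]
so black has many vertices. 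Then components consisting of several black vertices against one white \(2\) or \(3\), of weights \(2k+\dots\), can be chosen with weight \(\nmid 6\) and with demand \(p=0\) or \(p\le1\). Checking their \(q\)-demands against \(y\ge a_2+2a_3-b_2-b_3+2\) should give availability unless \(b=1\). The analogous computation with \(y\le1\) forces \(b\) large relative to \(a\).

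The main work, and the step I expect to be the obstacle, is the bookkeeping that collapses all surviving configurations to \(b=1\) or \(a=1\) with the exact sparse shapes. I would argue that after the above eliminations one colour has a single nontrivial vertex: either a lone \(3\), or a lone \(2\). In the first case the other colour must be all \(2\)'s, since a white \(3\) with \(x\le1\) allows a \(3\)--\(3\) bridge with an extra vertex to escape. Moreover the fixed leaves are pinned: black has \(x+1\) fixed points with \(x\) forced equal to \(b_2\)-dependent values, white has exactly one fixed point, giving \([3,1^{d-3}]\) against \([2^s,1]\) with \(d=2s+1\). The second case similarly yields \([3^r,1]\) against \([2,1^{d-2}]\).

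For each residual candidate I would try the component ``one vertex plus all opposite vertices'' and the variants obtained by dropping one vertex. Their weights are \(2k+1\), \(2k+2\), \(3k\pm\) small, and each case is quickly checked: either some choice has weight \(\nmid6\) and fits the leaf budget, or we are in the displayed forms. Mixed-colour configurations of type~(A) should be disposed of early. There, a \(2\) and a \(3\) on the same side, together with any opposite vertex, give two supports whose weights differ by one, so at least one of them avoids \(\{2,3,6\}\) whenever both fit. The leaf bounds make both fit unless the opposite side is very sparse, and that sparse situation reduces to the computations above.
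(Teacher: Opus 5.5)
Your opening agrees with the paper. Up to interchanging colours there is always a black \(3\) and a white \(2\), so the split into (A), (B), (C) is unnecessary. The components \(\{3\}\)--\(\{2\}\) and \(\{3\}\)--\(\{2,2\}\) then force \(x=0\) or \(y\le1\), and also \(x\le1\) or \(y=0\) when \(b_2\ge2\). Everything after that is the actual content of the lemma, and you leave it at ``should give'' and ``quickly checked''. There is a genuine gap there.

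First, you never dispose of the boundary \(y=1\). The paper adds the two inequalities in \eqref{eq:fixed-fixed-strict-bounds} to get \(a_3+b_3\le x+y-4\), which excludes \(x=y=1\). So if \(x\ge1\) and \(y=1\), then \(x\ge2\), and failure forces \(b_2=1\). The second bound then gives \(b_3\ge a_2+2a_3\ge2\). The component \(\{3\}\) against \(\{3,3\}\), with \((p,q,S)=(4,1,7)\) and \(x\ge a_2+3a_3+3\ge6\), then escapes. Only after this are you down to the two clean boundaries \(x=0\) and \(y=0\).

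At those boundaries the component families you name do not suffice.

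\emph{The case \(x=0\).} A \(p=0\) component against a single white \(2\) has exactly two black vertices. Take \(\lambda=[3^4,1]\), \(\nu=[2^2,1^9]\), with \(d=13\), \(x=0\), \(y=8\) and defect sum \(d-3\). The only such component is \(\{3,3\}\) against \(\{2\}\), of weight \(6\), even though \(b=2\) and the pair is not sparse. The escape needs three black \(3\)'s against both white \(2\)'s. The paper's uniform choice is the component containing all white vertices and \(T=b_2+2b_3+1\) black vertices, of which \(r=\max(1,T-a_2)\) are \(3\)'s. Its weight is \(2b_2+4b_3+r+2\), which equals \(6\) only in the second sparse form.

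\emph{The case \(y=0\).} Take \(\lambda=[3^2,1^7]\), \(\nu=[2^6,1]\). Every \(q=0\) component with at most one black nontrivial vertex has weight dividing \(6\), and your ``one vertex plus all opposite vertices'' components (and their one-vertex-dropped variants) are unavailable. One needs the paper's two black \(3\)'s against five white vertices, of weight \(10+\beta\). The subcases \(a_3=1,\ b_3\ge1\) and \(a_3=1,\ b_3=0,\ a_2\ge1\) also need their own components, of weights \(6+\beta\) and \(8\).

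\emph{Case (A).} Your early dismissal is incorrect. The bridges \(\{2\}\)--\(\{e\}\) and \(\{3\}\)--\(\{e\}\) need \(q=1\) and \(q=2\), so they fail precisely when \(y\le1\). In that situation the second bound makes the white side dense, not sparse: for example \(\lambda=[3,2,1^6]\) against \(\nu=[2^5,1]\) with \(y=0\). Such configurations must go through the same boundary analysis.
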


\begin{proof}
We use four explicit component tests.  A \(3\)-vertex on one side and a \(2\)-vertex
on the other give
\begin{equation}\label{eq:t6-test-32}
  (p,q,S)=(1,2,4).
\end{equation}
The same \(3\)-vertex with two opposite \(2\)-vertices gives
\begin{equation}\label{eq:t6-test-322}
  (p,q,S)=(2,1,5).
\end{equation}
Two \(3\)-vertices with one opposite \(2\)-vertex give
\[
  (p,q,S)=(0,4,6),
\]
which has the excluded weight \(6\) but will be used to
locate the boundary.  Finally, \(r\)
\(3\)-vertices with \(s\) opposite \(2\)-vertices have
\[
  p=s-r+1,\qquad q=2r-s+1,\qquad S=2r+s+1.
\]

\noindent\emph{Reduction to the boundary alternatives.}
Suppose first that a black \(3\)-vertex and a white \(2\)-vertex exist.  If
\(x\ge1\) and \(y\ge2\), \eqref{eq:t6-test-32} is an escape.
If \(x\ge1\), \(y=1\), and \(b_2\ge2\), then
\eqref{eq:t6-test-322} is available, and hence gives an escape component, when
\(x\ge2\).  The subcase
\(x=y=1\) is impossible, since adding the two bounds in
\eqref{eq:fixed-fixed-strict-bounds} gives
\[
  a_3+b_3\le -2.
\]
Hence, if none of the preceding escape components is
available,
\begin{equation}\label{eq:t6-boundary-alternatives}
  x=0,\qquad\text{or}\qquad y=0,\qquad\text{or}\qquad
  (y=1,\ b_2=1,\ x\ge2).
\end{equation}
In the last alternative, the first bound in
\eqref{eq:fixed-fixed-strict-bounds} gives
\[
  1+2b_3\le x+a_2+a_3-2,
\]
while the second gives \(a_2+2a_3\le b_3\).  Hence \(b_3\ge2\).  The component
consisting of one black \(3\)-vertex and two white \(3\)-vertices has
\[
  p=4,\qquad q=1,\qquad S=7.
\]
Moreover \(x\ge6\), because the degree equality gives
\[
  x=2+3b_3+y-C
  \ge3(a_2+2a_3)+3-2a_2-3a_3
  =a_2+3a_3+3\ge6.
\]
Thus this is an available escape component.  Therefore, if no escape component
has been obtained, only the cases \(x=0\) and \(y=0\) remain.

\noindent\emph{The boundary case \(y=0\).}
We first assume \(a_3\ge2\).  Choose \(r=2\)
black \(3\)-vertices and choose \(s,\beta\) on the white side so that
\[
  s+\beta=5,\qquad 0\le s\le b_2,\qquad 0\le\beta\le b_3;
\]
this is possible because the second bound in
\eqref{eq:fixed-fixed-strict-bounds} gives
\[
  b_2+b_3\ge a_2+2a_3+2\ge6.
\]
For this component \(q=0\), and
\[
  S=10+\beta,\qquad p=4+\beta.
\]
To see that the black fixed leaves suffice, take \(\beta=\max(0,5-b_2)\).  If
\(b_2\ge5\), then \(\beta=0\), and the degree equality and
\(b_2+b_3\ge a_2+2a_3+2\) give
\[
  x=2b_2+3b_3-2a_2-3a_3\ge a_3+4\ge6>p.
\]
If \(b_2<5\), then \(b_3\ge a_2+2a_3+2-b_2\), hence
\[
  x=2b_2+3b_3-2a_2-3a_3
  \ge a_2+3a_3+6-b_2
  \ge12-b_2\ge4+\beta=p.
\]
Thus the case \(a_3\ge2\) gives an escape component.

\noindent\emph{(i) \(a_3=1\) and \(b_3\ge1\).}
Choose one
black \(3\)-vertex and choose \(s,\beta\) with \(s+\beta=3\) and \(\beta\ge1\).
Then \(q=0\), \(S=6+\beta\), and \(p=3+\beta\).  The choice is possible because
\(b_2+b_3\ge a_2+4\).  Take \(\beta=1\) when \(b_2\ge2\), and \(\beta=2\) when
\(b_2=1\).  In the first case
\[
  x=2b_2+3b_3-2a_2-3
  =2(b_2+b_3-a_2)+b_3-3\ge6\ge p.
\]
In the second case \(b_3\ge a_2+3\), so
\[
  x=2+3b_3-2a_2-3\ge a_2+8\ge p.
\]
\noindent\emph{(ii) \(a_3=1\), \(b_3=0\), and \(a_2\ge1\).}
Use one black \(3\)-vertex, one black \(2\)-vertex, and
four white \(2\)-vertices.  This component has
\[
  p=3,\qquad q=0,\qquad S=8,
\]
and it is available because \(b_2\ge a_2+4\) and
\[
  x=2b_2-2a_2-3\ge5.
\]
Consequently, if no escape component has been obtained, the only remaining
case with \(y=0\), a black \(3\)-vertex, and a white \(2\)-vertex has
\(a_3=1\) and \(a_2=b_3=0\).  This is exactly
\[
  [3,1^{d-3}]\quad\text{against}\quad [2^s,1].
\]
\noindent\emph{The boundary case \(x=0\).}
Put
\[
  T=b_2+2b_3+1.
\]
The first bound in \eqref{eq:fixed-fixed-strict-bounds} gives
\(T\le a_2+a_3-1\).  Choose integers
\(\alpha,r\) with
\[
  0\le\alpha\le a_2,\qquad 1\le r\le a_3,\qquad \alpha+r=T,
\]
and take the component consisting of \(\alpha\) black \(2\)-vertices, \(r\)
black \(3\)-vertices, and all white nontrivial vertices.  Then
\[
  p=0,\qquad q=b_3+r+2,\qquad S=2b_2+4b_3+r+2.
\]
Choose \(r\) minimal, i.e. \(r=\max(1,T-a_2)\).  If \(r=1\), then
\(a_2\ge b_2+2b_3\),
and
\[
  y-(b_3+r+2)=2a_2+3a_3-2b_2-4b_3-3\ge3a_3-3\ge0.
\]
If \(r=T-a_2>1\), then
\[
  y-(b_3+r+2)=3(a_2+a_3-b_2-2b_3-1)\ge3,
\]
again by the first bound in
\eqref{eq:fixed-fixed-strict-bounds}.  Hence the component is available.  Its
weight gives an escape component except in the single case
\(b_2=1,b_3=0,r=2\), where \(S=6\).  If \(a_2>0\) in that case, choose instead
\(r=1,\alpha=1\), giving \(S=5\).
Thus, if no escape component has been obtained, the only remaining case with
\(x=0\) has \(a_2=b_3=0\) and \(b_2=1\), which is exactly
\[
  [3^r,1]\quad\text{against}\quad [2,1^{d-2}].
\]

The remaining orientations are obtained by interchanging the two colours.
We have therefore exhibited an escape component outside the two sparse forms.
If the two partitions form an isolated-edge \(t=6\) failure, then
Lemma~\ref{lem:t6-escape} shows that no escape component exists, so one of the
two sparse forms must occur.  This proves both assertions.
\end{proof}

\begin{lemma}[\(t=6\) isolated-edge check]\label{lem:t6-check}
Let \(\lambda,\nu\vdash d\) be nontrivial partitions, each having a fixed
point. Adopt the common fixed-edge setup above and assume
\(b(\lambda)+b(\nu)\le d-3\).  Suppose that every output of Move A1 obtained
from an incidence forest containing the reserved isolated edge has lcm \(6\).
Then, up to interchanging the two colours, one of the following two sparse
forms holds:
\[
  d=2s+1,\qquad [3,1^{d-3}]\quad\text{against}\quad [2^s,1],
\]
or
\[
  d=3r+1,\qquad [3^r,1]\quad\text{against}\quad [2,1^{d-2}].
\]
Moreover, outside the all-\(2\) case and these two sparse forms, there is an
output of Move A1 obtained from such a forest that contains a part \(S\) with
\(S\nmid6\), and hence has lcm different from \(6\).
\end{lemma}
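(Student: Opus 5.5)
The plan is to assemble Lemma~\ref{lem:t6-escape} with the three classification lemmas just proved. For the second assertion I will exhibit an escape component outside the all-\(2\) case and the two sparse forms. Lemma~\ref{lem:t6-escape} then extends that component, together with the reserved isolated edge, to an incidence forest. This forest realizes a Move~A1 output containing the part \(S\nmid6\), so its lcm is not \(6\). The first assertion will be deduced from this together with Lemma~\ref{lem:t6-uniform-two-three}.

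The escape component comes from a case split on the cycle lengths. Suppose some cycle of \(\lambda\) or \(\nu\) has length at least \(4\). Then Lemma~\ref{lem:t6-long-cycles} produces an escape component directly. Otherwise every part greater than \(1\) is \(2\) or \(3\), and I split according to which of these lengths occur, using the multiplicities \(a_2,a_3,b_2,b_3\). Nontriviality of both partitions guarantees that at least one length occurs on each side.

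\begin{enumerate}[label=(\roman*)]
\item If only \(3\)'s occur, that is \(a_2=b_2=0\), the second half of Lemma~\ref{lem:t6-uniform-two-three} gives an escape component.
\item If both \(2\) and \(3\) occur among the parts, Lemma~\ref{lem:t6-mixed-two-three} gives an escape component unless, up to colours, we are in one of the two sparse forms.
\item If only \(2\)'s occur, that is \(a_3=b_3=0\), we are in the all-\(2\) case, which the second assertion excludes.
\end{enumerate}

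This proves the ``moreover'' statement.

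For the first assertion, assume that every isolated-edge forest output has lcm \(6\). The all-\(2\) case is then impossible, because the first half of Lemma~\ref{lem:t6-uniform-two-three} says an all-\(2\) pair never forms an isolated-edge \(t=6\) failure. In every other case the previous step produces an escape component, which contradicts the failure hypothesis by Lemma~\ref{lem:t6-escape}. This covers all cases except the two sparse forms, so one of them must hold.

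The main point needing care is the exhaustiveness of the case split. The lemma on \(2/3\) cycles requires both lengths to occur among all nontrivial parts, possibly on different sides. The uniform lemma covers exactly the cases where only \(2\) or only \(3\) occurs. I will check that these conditions partition the possibilities once cycles of length at least \(4\) are excluded. I will also check that the hypotheses of each cited lemma (nontrivial, fixed points present, defect sum at most \(d-3\)) are exactly those in force here.
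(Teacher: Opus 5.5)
Your proposal is correct and follows essentially the same route as the paper. Both arguments assemble Lemmas~\ref{lem:t6-long-cycles}, \ref{lem:t6-uniform-two-three}, and \ref{lem:t6-mixed-two-three} through the same exhaustive split on cycle lengths, and both use Lemma~\ref{lem:t6-escape} to turn an escape component into an output with a part not dividing \(6\). The only difference is that you prove the ``moreover'' clause first and derive the classification of failures from it, whereas the paper argues the classification directly; this change of order is immaterial.
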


\begin{proof}
Suppose first that the two partitions form an isolated-edge \(t=6\) failure.
Lemma~\ref{lem:t6-long-cycles} shows that every nontrivial cycle has length
\(2\) or \(3\).  Lemma~\ref{lem:t6-uniform-two-three} excludes the all-\(2\)
and all-\(3\) configurations.  Both lengths therefore occur, and
Lemma~\ref{lem:t6-mixed-two-three} gives one of the two displayed sparse forms.

For the moreover statement, suppose that the configuration is neither all-\(2\)
nor one of the sparse forms.  If a cycle has length at least \(4\),
Lemma~\ref{lem:t6-long-cycles} gives an escape component.  If every nontrivial
cycle has length \(2\) or \(3\), then either the all-\(3\) part of
Lemma~\ref{lem:t6-uniform-two-three} or
Lemma~\ref{lem:t6-mixed-two-three} gives one.  In each case,
Lemma~\ref{lem:t6-escape} extends that component to the required Move A1
output.
\end{proof}

\begin{proof}[Proof of Lemma~\ref{lem:fixed-fixed-flex}]
We first consider the case in which no value \(t\) is
prescribed, and then treat \(t=6\), \(t\in\{2,3\}\), and \(t=4\) separately.

\medskip
\noindent\textbf{Use of the common setup.}
Use the common fixed-edge setup preceding the statement of the lemma.
By Corollary~\ref{cor:forest-assembly}, the isolated edge joining the
two reserved fixed vertices is an incidence-tree component of weight \(1\);
it therefore contributes the part \(1\) to the product.

\medskip
\noindent\textbf{Case 1: no value \(t\) is prescribed.}
Apply Corollary~\ref{cor:forest-completion} to the two remaining degree
sequences, each of total weight \(d-1\).  Their total defect is still
\(b(\lambda)+b(\nu)\), and
\[
  b(\lambda)+b(\nu)\le d-3=(d-1)-2\le(d-1)-1.
\]
Thus the remaining vertices can be
completed to an incidence forest on all remaining vertices.  The isolated
edge gives a part \(1\), so the resulting Move A1 output has \(g(M)=1\).  This
proves the final assertion of the lemma.  We now assume that
the value \(t\) is prescribed.

\medskip
\noindent\textbf{Reduction to one additional component.}
For a prescribed value \(t\), we use the same completion argument as in the
proof of Lemma~\ref{lem:t6-escape}.  After reserving the isolated fixed edge,
it is enough to construct one additional incidence-tree component whose
weight \(S\) does not divide \(t\); any unused vertices can then be completed
to a disjoint incidence forest.  The isolated edge contributes a part \(1\),
so the resulting product is gcd-one.  In the calculations below, \(p\) and
\(q\) denote the numbers of black and white fixed leaves used by the additional
component, respectively.

\medskip
\noindent\textbf{Case 2: \(t=6\).}
Apply Lemma~\ref{lem:t6-check}.  If the isolated-edge construction
already gives a Move A1 output with lcm different from \(6\), we are done for
this value of \(t\).  Otherwise one of the two sparse forms in that lemma
occurs.

\smallskip
\noindent\textbf{Subcase 2a: the first sparse family.}
One side has a single \(3\)-cycle and only fixed leaves, while the other
side consists of \(2\)-cycles and a single fixed leaf.  If there are \(s\) such
\(2\)-cycles,
then \(d=2s+1\) and the bound
\(b(\lambda)+b(\nu)\le d-3\) gives
\[
  s+2\le2s-2,
\]
hence \(s\ge4\).  In that case do not isolate the two fixed points.
Instead use the following incidence-tree component, whose
validity follows from Lemma~\ref{lem:balanced-tree-component}:
\[
  \{3\}\cup\{1,1\}\quad\text{on one side, and}\quad
  \{2,2,1\}\quad\text{on the other side}.
\]
Both sides have total weight \(5\), and the component has \(6=5+1\) vertices,
so it contributes a part \(5\).  The remaining \(2\)-cycles on the second side
are paired with pairs of fixed leaves on the first side, contributing parts
\(2\).  This uses exactly
\[
  2+2(s-2)=2s-2=d-3
\]
fixed leaves on the first side and the unique fixed leaf on the second side.
Thus the product has type \([5,2,\ldots,2]\), which is gcd-one and has lcm
\(10\ne6\).

\smallskip
\noindent\textbf{Subcase 2b: the second sparse family.}
One side consists of \(3\)-cycles and a single fixed
leaf, while the other side has a single \(2\)-cycle and only fixed leaves:
\[
  d=3r+1,\qquad \lambda=[3^r,1],\qquad \nu=[2,1^{d-2}].
\]
The bound \(b(\lambda)+b(\nu)\le d-3\) gives
\(2r+1\le3r-2\), hence \(r\ge3\).  Again do not
isolate the fixed points.  Use the component
\[
  \{3,1\}\quad\text{on one side, and}\quad
  \{2,1,1\}\quad\text{on the other side}.
\]
It has weight \(4\) and \(5=4+1\) vertices, so it contributes a part \(4\).
Pair each remaining \(3\)-cycle with three fixed leaves on the other side.  The
construction uses exactly
\[
  2+3(r-1)=3r-1=d-2
\]
fixed leaves on the second side and the unique fixed leaf on the first side.
The resulting Move A1 output has type \([4,3^{r-1}]\), hence gcd one and lcm
\(12\ne6\).  These two non-isolated constructions close the only sparse
failures, so \(t=6\) can also be avoided.

\medskip
\noindent\textbf{Case 3: \(t\in\{2,3\}\).}

\noindent\textbf{Subcase 3a: the datum is not all-\(2\).}
Use the moreover part of Lemma~\ref{lem:t6-check}: in every non-all-\(2\) case outside
the two sparse families, the isolated-edge construction supplies a part
\(S\nmid6\), hence in particular \(S\nmid2\) and \(S\nmid3\).  In the two sparse
families, the non-isolated products just displayed have lcms \(10\) and \(12\),
so they avoid both \(2\) and \(3\).

\smallskip
\noindent\textbf{Subcase 3b: all nontrivial cycles have length \(2\).}
Write
\(\lambda_{\rm nt}=[2^{a_2}]\), \(\nu_{\rm nt}=[2^{b_2}]\), and assume by
symmetry that \(a_2\ge b_2\).  Put \(r=a_2-b_2\), so \(y=x+2r\) and
\(x\ge2-r\).

\noindent\emph{(3b.i) \(a_2=b_2\).}
Then \(x=y\ge2\).  The component containing all nontrivial vertices plus one fixed
leaf on each side has weight \(2a_2+1\ge3\), avoiding \(t=2\).  A star of weight
\(2\) is also available, so \(t=3\) is avoided.

\noindent\emph{(3b.ii) \(a_2>b_2\).}
The component with
\(b_2+1\) black \(2\)-vertices and all \(b_2\) white \(2\)-vertices has
\[
  p=0,\qquad q=2,\qquad S=2b_2+2\ge4.
\]
Here \(b_2\ge1\) because the white branch is nontrivial, and \(y=x+2r\ge2\), so
the component is available and avoids both \(t=2\) and \(t=3\).  Hence \(t=2\)
and \(t=3\) are avoidable.

\medskip
\noindent\textbf{Case 4: \(t=4\).}

\noindent\textbf{Subcase 4a: some nontrivial cycle has length at least \(3\).}
By symmetry, it suffices first to treat the case
where some black cycle has length \(c\ge3\).
Order the white defects \(\delta_j=e_j-1\) increasingly and use
\eqref{eq:fixed-fixed-strict-bounds}.
If \(c\ne4\) and \(y\ge c\), the black star
has weight \(c\nmid4\).

\smallskip
\noindent\emph{(4a.i) \(y<c\).}
Put \(k=c-y\).  Since
\[
  C-a\ge c+a-2,
\]
the second inequality in
\eqref{eq:fixed-fixed-strict-bounds} gives \(b\ge a+k\), so
\(k\le b\).  Use the
component containing this \(c\)-vertex and the \(k\) white vertices of smallest
defects.  It has
\[
  q=c-k=y,\qquad p=\sum_{j=1}^k\delta_j,\qquad
  S=c+\sum_{j=1}^k\delta_j,
\]
and the first inequality in
\eqref{eq:fixed-fixed-strict-bounds} gives
\[
  x-p\ge\sum_{j>k}\delta_j-a+2\ge2,
\]
because \(b-k\ge a\).  Thus the component is available.  Its weight is at least
\(4\), and the only possibility that must be excluded is
\(S=4\), i.e.
\[
  c=3,\qquad k=1,\qquad y=2,\qquad \delta_1=1.
\]
In that exceptional case \(b\ge a+1\), so use the two smallest white vertices
instead.  The new component has
\[
  q=1,\qquad p'=\delta_1+\delta_2,\qquad S'=3+p'\ge5,
\]
and
\[
  x-p'\ge\sum_{j>2}\delta_j-a+2\ge1,
\]
because \(b-2\ge a-1\).  Hence this exceptional case also gives
an available component of weight \(S'\nmid4\).

\smallskip
\noindent\emph{(4a.ii) \(c=4\) and \(y\ge4\).}
The bridge to the first white vertex has
\[
  p=\delta_1,\qquad q=3,\qquad S=4+\delta_1\ge5.
\]
If \(\delta_1\le x\), it is available.  If \(\delta_1>x\), put
\(h=\delta_1-x\).  From the first inequality in
\eqref{eq:fixed-fixed-strict-bounds},
\[
  h+\sum_{j>1}\delta_j\le a-2,
\]
so \(h\le a-2\).  Choose \(h\) further black nontrivial vertices of smallest
defects \(\gamma_1,\ldots,\gamma_h\), and put
\(\Gamma=\gamma_1+\cdots+\gamma_h\).  The component consisting of the
length-\(4\) vertex, these \(h\) black vertices, and the first white vertex has
\[
  p=x,\qquad q=\Gamma+3,\qquad S=4+\delta_1+\Gamma\ge5.
\]
Let \(\Gamma'\) be the sum of the unused black defects.  The second inequality
in \eqref{eq:fixed-fixed-strict-bounds} gives
\[
  3+\Gamma+\Gamma'\le y+b-2.
\]
The inequalities \(h+\sum_{j>1}\delta_j\le a-2\) and
\(\sum_{j>1}\delta_j\ge b-1\) imply \(a-1-h\ge b\), hence
\(\Gamma'\ge b\).  Therefore \(y\ge\Gamma+5\), so \(q\le y\).  This augmented
component is available and has weight \(S\nmid4\).  Thus any
length at least \(3\) provides an available component whose
weight does not divide \(4\).

\smallskip
\noindent\textbf{Subcase 4b: all nontrivial cycles have length \(2\).}
Consequently, if the isolated-edge construction cannot avoid
lcm \(4\), both sides have only \(2\)-cycles.  Write
\(\lambda_{\rm nt}=[2^{a_2}]\) and \(\nu_{\rm nt}=[2^{b_2}]\).  A bridge
between one \(2\)-cycle on each side has
\[
  p=q=1,\qquad S=3,
\]
so the assumed failure forces \(x=0\) or \(y=0\).  By symmetry
assume \(y=0\).

\noindent\emph{(4b.i) The black side has at least two \(2\)-cycles.}
Use two black \(2\)-cycles and three white
\(2\)-cycles.  This component has
\[
  p=2,\qquad q=0,\qquad S=6,
\]
and is available because the bounds in
\eqref{eq:fixed-fixed-strict-bounds}, together with the degree equality, give
\[
  b_2\ge a_2+2,\qquad x=2b_2-2a_2\ge4.
\]
\noindent\emph{(4b.ii) The black side has exactly one \(2\)-cycle.}
Hence the only remaining failure has a single black \(2\)-cycle.  Up to
interchanging colours it is
\[
  d=2r+1,\qquad \lambda=[2^r,1],\qquad \nu=[2,1^{d-2}].
\]
The bound \(b(\lambda)+b(\nu)\le d-3\) gives
\(r+1\le2r-2\), hence \(r\ge3\).  In this case do
not isolate the two fixed points.
Use the incidence-tree component certified by
Lemma~\ref{lem:balanced-tree-component}:
\[
  \{2,1\}\quad\text{on one side, and}\quad \{2,1\}\quad\text{on the other side},
\]
of weight \(3\); it has \(4=3+1\) vertices.  Pair each remaining \(2\)-cycle
with two fixed leaves on the other side.  The resulting Move A1 output has
type \([3,2,\ldots,2]\); indeed, after the first component the other side has
\[
  (d-2)-1=2r-2
\]
fixed leaves left, exactly enough for the remaining \(r-1\) two-cycles.  Hence
the product has gcd one and lcm \(6\ne4\).  Thus \(t=4\) is also avoidable.
\end{proof}

\subsection{Fixed/fixed Move A1 and reduction to mixed residuals}
\label{subsec:fixed-fixed-reduction}

\begin{proposition}[Fixed/fixed Move A1]\label{prop:fixed-fixed}
Let a Zieve-admissible datum contain two fixed-point branches
\(\lambda,\nu\) satisfying
\[
  b(\lambda)+b(\nu)\le d-3.
\]
Then Move A1 can be applied with an output \(M\) for which the resulting datum
is Zieve-admissible.
\end{proposition}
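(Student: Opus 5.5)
We apply Lemma~\ref{lem:fixed-fixed-flex} to the pair \((\lambda,\nu)\), choosing the prescribed value \(t\) according to the lcms of the branches that the reduced datum inherits.  Write the original datum as \(\Lambda=(\lambda,\nu,\mu_3,\ldots,\mu_k)\) with \(k\ge4\), so the reduced datum is \(D'=(M,\mu_3,\ldots,\mu_k)\) with \(k-1\ge3\) entries.  Move A1 preserves the total defect, and \(b(M)=b(\lambda)+b(\nu)\ge2\), so \(M\) is nontrivial.  Hence \(D'\) is a candidate datum of the same source genus.  Every \(\mu_i\) is inherited from \(\Lambda\) and is gcd-one.  Thus \(D'\) is Zieve-admissible once two things hold: \(g(M)=1\), and \(D'\) satisfies the lcm condition.

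We first find the values of \(m(M)\) that would violate the lcm condition.  Every nontrivial branch has \(m\ge2\), so each term \(1-1/m(\cdot)\) is at least \(1/2\).
\begin{itemize}
\item If \(k-1\ge5\), the sum is at least \(5/2>2\), so there is no constraint on \(m(M)\).
\item If \(k-1=4\), equality with \(2\) forces all four lcms to be \(2\).  So the only forbidden value is \(m(M)=2\), and only when \(m(\mu_3)=m(\mu_4)=m(\mu_5)=2\).
\item If \(k-1=3\), equality means \(1/m(M)+1/m(\mu_3)+1/m(\mu_4)=1\).  The solutions in integers \(\ge2\) are the permutations of \((3,3,3)\), \((2,4,4)\) and \((2,3,6)\).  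Given the fixed pair \((m(\mu_3),m(\mu_4))\), at most one value of \(m(M)\) solves this, namely \(m(M)=\bigl(1-1/m(\mu_3)-1/m(\mu_4)\bigr)^{-1}\) when that is an integer.  Going through the three triples, this value always lies in \(\{2,3,4,6\}\).
\end{itemize}

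So in every case there is at most one forbidden value \(t\), and if it exists then \(t\in\{2,3,4,6\}\).  If a forbidden \(t\) exists, we invoke Lemma~\ref{lem:fixed-fixed-flex} with that \(t\).  This gives an output \(M\) with \(\Cl_M\subset\Cl_\lambda\Cl_\nu\), \(b(M)=b(\lambda)+b(\nu)\), \(g(M)=1\) and \(m(M)\ne t\).  If no value is forbidden, we use the unprescribed case of the lemma, which gives an output with a part \(1\) and hence \(g(M)=1\).

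In either case \(D'\) is branchwise gcd-one and satisfies the lcm condition, so it is Zieve-admissible.  All the combinatorial difficulty sits in Lemma~\ref{lem:fixed-fixed-flex}.  The only step needing care here is the enumeration of the solutions to \(\sum1/m_i=1\) in the triple case, to confirm that the forbidden value is unique and lies in \(\{2,3,4,6\}\).
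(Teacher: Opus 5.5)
Your proof is correct and follows essentially the same route as the paper. You split by the number of branches of the reduced datum, find the at most one forbidden lcm value $t\in\{2,3,4,6\}$ (this arises only in the four- and three-branch cases), and invoke Lemma~\ref{lem:fixed-fixed-flex} with that $t$, or with no $t$ prescribed, while Move A1 supplies nontriviality and the candidate equation. Your assumption $k\ge4$ costs nothing: in a candidate datum with at most three branches, any pair has defect sum at least $d-1$, so the hypothesis $b(\lambda)+b(\nu)\le d-3$ already forces $k\ge4$.
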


\begin{proof}
Let the resulting datum have \(k-1\) branches.  If
\(k-1\ge5\), then every branch contributes at least \(1/2\) to
the sum in the lcm condition, so the resulting datum
automatically satisfies the lcm condition.  Apply
Lemma~\ref{lem:fixed-fixed-flex}
with no value \(t\) prescribed.

If \(k-1=4\), the lcm condition can fail only when all four
lcms are \(2\).  If the three untouched branches are not all of lcm \(2\),
the lcm condition holds automatically.  If they
are all of lcm \(2\), apply Lemma~\ref{lem:fixed-fixed-flex} with \(t=2\).
Then \(m(M)\ne2\), so the resulting quadruple
satisfies the lcm condition.

It remains to consider \(k-1=3\).  Let \(u,v\) be the lcms of the two untouched
branches.  The only triples of lcms for which the lcm
condition fails are
\[
  (3,3,3),\qquad (2,4,4),\qquad (2,3,6).
\]
Thus, for the fixed pair \((u,v)\), there is at most one value
\[
  t\in\{2,3,4,6\}
\]
for which \((t,u,v)\) does not satisfy the lcm condition.  If
no such value exists, use
Lemma~\ref{lem:fixed-fixed-flex} with no value \(t\)
prescribed; any output supplied by the lemma then satisfies the lcm
condition.  If such a value does
exist, use the lemma with this \(t\); the resulting output satisfies
\(m(M)\ne t\), so the lcm condition again holds.  In both cases,
Lemma~\ref{lem:fixed-fixed-flex} gives \(g(M)=1\).
Proposition~\ref{prop:A1} shows that the
resulting datum is a candidate datum of the same source genus.  Its untouched
branches retain the original nontriviality and gcd-one properties, while the
new branch has gcd one.  Hence the resulting datum is Zieve-admissible.
\end{proof}

\begin{proposition}[Standard reduction to mixed residuals]\label{prop:standard-residual}
Let \(\Lambda\) be a Zieve-admissible datum of degree \(d\)
with \(k\) branches, where
\(k\ge4\).  Then at least one of the following holds:
\begin{enumerate}[label=\textnormal{(\roman*)}]
\item \(\Lambda\) admits a reduction to a candidate datum satisfying the
hypotheses of Corollary~\ref{cor:special-reduced-data};
\item Move A1 can be applied to \(\Lambda\) to produce a Zieve-admissible
datum with \(k-1\) branches;
\item
Neither case~\textnormal{(i)} nor case~\textnormal{(ii)} holds, and
\(\Lambda\) contains an A1-small pair \((F,B)\), where \(F\) has fixed
points and \(B\) is fixed-point-free; every A1-small pair has this form and
satisfies
\[
  b(F)+b(B)\le d-3.
\]
\end{enumerate}
In case~\textnormal{(iii)}, we call each pair \((F,B)\)
described there a \emph{mixed A1-small pair}, and we call \(\Lambda\) a
\emph{mixed residual datum}.  Thus, ``mixed'' records that exactly one branch
of the pair has fixed points, while ``residual'' records that neither of the
preceding two reductions is available.
\end{proposition}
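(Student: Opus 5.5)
The plan is to assume that neither (i) nor (ii) holds and show that (iii) follows. We split according to whether $\Lambda$ contains an A1-small pair at all.

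\textbf{No A1-small pair.} Suppose every pair of branches has defect sum at least $d$. Choose any two branches $\lambda,\nu$. Since $k\ge4$, at least two further branches remain, and their defect sum is at least $d$. Hence both hypotheses of Proposition~\ref{prop:A2} hold. Its output $\pi$ is $[d]$, $[d-1,1]$, or $[d/2,d/2]$, so in every case $\pi=[d]$, $\pi=[d-1,1]$, or $\ell(\pi)=2$. The reduced datum $D'$ is a candidate datum with $k-1\ge3$ branches, and all its branches other than $\pi$ are inherited from $\Lambda$. Thus $D'$ satisfies the hypotheses of Corollary~\ref{cor:special-reduced-data}, so (i) holds. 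We may therefore assume that some A1-small pair exists.

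\textbf{Defect sum $d-1$ or $d-2$.} Let $(\lambda,\nu)$ be an A1-small pair with $b(\lambda)+b(\nu)\in\{d-1,d-2\}$. Proposition~\ref{prop:A1} produces an output $\mu$ with $\ell(\mu)=d-b(\lambda)-b(\nu)\in\{1,2\}$, so $\mu=[d]$ or $\ell(\mu)=2$. The reduced datum is a candidate datum of the same genus with $k-1\ge3$ branches, and again (i) holds. Consequently every A1-small pair satisfies $b(\lambda)+b(\nu)\le d-3$.

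\textbf{Sorting the remaining pairs by fixed points.}
\begin{itemize}
\item If both members of an A1-small pair have fixed points, Proposition~\ref{prop:fixed-fixed} applies (the defect bound $d-3$ is now available) and gives (ii).
\item If both members are fixed-point-free, they are gcd-one because $\Lambda$ is Zieve-admissible. By Lemma~\ref{lem:fpf-defect} each has defect at least $\lfloor d/2\rfloor+1$, so their defect sum is at least $2\lfloor d/2\rfloor+2\ge d+1>d-1$. This contradicts A1-smallness.
\end{itemize}
Hence every A1-small pair is mixed, of the form $(F,B)$ with $F$ having fixed points and $B$ fixed-point-free, and satisfies $b(F)+b(B)\le d-3$. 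At least one such pair exists. This is exactly (iii), and the clause ``neither (i) nor (ii) holds'' is the standing assumption.

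The main point needing care is the check that the reduced data in the A2 case and in the boundary A1 case really meet the hypotheses of Corollary~\ref{cor:special-reduced-data}. For Move A2 this means the new branch must be the specified one and $v(D')\ge 2d-2$; the latter follows from the defect consequence recorded after Proposition~\ref{prop:A2}. For the boundary A1 case it follows from genus preservation.
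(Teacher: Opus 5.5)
Your proposal is correct and follows essentially the same route as the paper: Move A2 (Proposition~\ref{prop:A2}) when no A1-small pair exists, boundary Move A1 outputs of length one or two for defect sums $d-1$ and $d-2$, Proposition~\ref{prop:fixed-fixed} for fixed/fixed pairs, and Lemma~\ref{lem:fpf-defect} to rule out pairs of fixed-point-free branches. The only cosmetic difference is that you invoke Proposition~\ref{prop:fixed-fixed} directly, skipping the paper's intermediate remark that any gcd-one A1 output satisfying the lcm condition already gives (ii); this omission is harmless.
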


\begin{proof}
First suppose that there is no A1-small pair.  Choose any two branches
\(\lambda_i,\lambda_j\).  Then
\[
  b(\lambda_i)+b(\lambda_j)\ge d.
\]
Since \(k\ge4\), at least two branches remain.  Choose two of them, say
\(\lambda_p,\lambda_q\).  They also fail to be A1-small, and hence
\[
  \sum_{s\ne i,j} b(\lambda_s)
  \ge b(\lambda_p)+b(\lambda_q)
  \ge d\ge d-1.
\]
Thus the hypotheses of Move A2 in Proposition~\ref{prop:A2} hold for
\(\lambda_i,\lambda_j\).  The new partition in the resulting candidate datum
is one of the three types listed in Corollary~\ref{cor:special-reduced-data}, and all
other branches have gcd one.  Hence the corollary applies, giving
case~\textnormal{(i)}.

Now assume that an A1-small pair exists.  If some such pair
\((\lambda,\nu)\) has defect sum \(d-1\) or \(d-2\),
Proposition~\ref{prop:A1} gives an output \(M\) of Move A1 with
\[
  \ell(M)=d-b(M)=d-b(\lambda)-b(\nu)\in\{1,2\}.
\]
The resulting datum therefore satisfies
Corollary~\ref{cor:special-reduced-data}, giving case~\textnormal{(i)}.  We may now
assume that every A1-small pair satisfies the stronger inequality
\[
  b(\lambda)+b(\nu)\le d-3.
\]

If some A1-small pair admits a Move A1 output \(M\) satisfying
\(g(M)=1\), and the resulting datum satisfies the lcm
condition, then
\(b(M)=b(\lambda)+b(\nu)\ge2\), because \(\lambda\) and
\(\nu\) are nontrivial branches.  Thus \(M\) is nontrivial, and
Proposition~\ref{prop:A1}, together with \(g(M)=1\) and the lcm condition,
shows that the resulting datum is a Zieve-admissible
datum with \(k-1\) branches.  This is
case~\textnormal{(ii)}.

It remains to classify the A1-small pairs when neither of the first two cases
has occurred.
Two fixed-point-free gcd-one branches cannot be A1-small: by
Lemma~\ref{lem:fpf-defect}, each has defect at least
\(\lfloor d/2\rfloor+1\), so their defect sum is \(>d-1\).  Therefore every
remaining A1-small pair contains a fixed-point branch.

If a remaining A1-small pair consists of two fixed-point branches, then
Proposition~\ref{prop:fixed-fixed} gives case~\textnormal{(ii)}, contrary to
the present assumption.

Consequently, every remaining A1-small pair is necessarily of the
form \((F,B)\), where \(F\) has fixed points and \(B\) is fixed-point-free
gcd-one.  Every such pair has defect sum at most \(d-3\), so this is
case~\textnormal{(iii)}.
\end{proof}

\section{Positive-genus mixed residuals}\label{sec:residual-positive}

This section treats mixed residual data of positive source genus.  For a
fixed-point branch \(F\) and a fixed-point-free branch \(B\), we construct a
genus-lowering product.  We then show that the
construction can be chosen so that the new branch has gcd one and the
reduced datum continues to satisfy the lcm condition.  The
resulting reduction decreases
both the number of branches and the source genus by one, so the strong
induction hypothesis applies.

\subsection{A genus-lowering product construction}

\begin{lemma}[Two local multiplication rules]\label{lem:local-rules}
Let products be composed from right to left.
\begin{enumerate}[label=(\alph*)]
\item Let \(\beta\) contain disjoint cycles \(C_0,\ldots,C_r\), and choose one
letter \(p_i\in C_i\).  If
\[
  \alpha=(p_0\,p_1\,\cdots\,p_r),
\]
then the product \(\alpha\beta\) replaces \(C_0,\ldots,C_r\) by one cycle of
length \(|C_0|+\cdots+|C_r|\), leaving all other cycles unchanged.
\item Let \(\beta\) contain disjoint cycles \(C_0,\ldots,C_r\), where
\[
  C_0=(u_0\,u_1\,u_2\,\cdots\,u_{x-1})
\]
with \(x=|C_0|\ge2\) and \(u_0,u_1\) adjacent in the
displayed cyclic order.  Choose one letter
\(p_i\in C_i\) for \(1\le i\le r\).  If
\[
  \alpha=(u_0\,p_1\,p_2\,\cdots\,p_r\,u_1),
\]
then \(\alpha\beta\) replaces \(C_0,\ldots,C_r\) by the fixed point \(u_0\)
and one cycle of length
\[
  |C_0|+\cdots+|C_r|-1,
\]
leaving all other cycles unchanged.
\end{enumerate}
\end{lemma}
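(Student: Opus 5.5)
Both parts reduce to a direct letter-by-letter computation of \(\alpha\beta\), composed right to left (first \(\beta\), then \(\alpha\)). Since \(\alpha\) moves only the letters \(p_i\) (resp.\ \(u_0,p_1,\ldots,p_r,u_1\)), the product agrees with \(\beta\) off the preimages \(\beta^{-1}\) of these letters. Every cycle of \(\beta\) other than \(C_0,\ldots,C_r\) is therefore left untouched, and we only need to trace orbits inside \(C_0\cup\cdots\cup C_r\).

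For part (a), write \(C_i=(p_i\,\beta(p_i)\,\cdots\,\beta^{-1}(p_i))\). Then \(\alpha\beta\) sends \(\beta^{-1}(p_i)\mapsto p_i\mapsto p_{i+1}\) (indices mod \(r+1\)), and agrees with \(\beta\) on every other letter of \(C_i\). Starting at \(p_{1}\) and following \(\alpha\beta\), we run through \(C_1\) in \(\beta\)-order until \(\beta^{-1}(p_1)\), jump to \(p_2\), run through \(C_2\), and so on. After traversing all of \(C_1,\ldots,C_r,C_0\) we return to \(p_1\). This gives a single cycle of length \(\sum|C_i|\). An alternative check is that \(\Gamma(\alpha,\beta)\) restricted to these letters is a star, hence a tree, so Proposition~\ref{prop:incidence-forest} gives one cycle of the total weight.

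For part (b), use \(\beta(u_0)=u_1\). Then \(\alpha\beta(u_0)=\alpha(u_1)=u_0\), so \(u_0\) becomes a fixed point. Next, \(\alpha\beta(u_{x-1})=\alpha(u_0)=p_1\). From there we traverse \(C_1\) up to \(\beta^{-1}(p_1)\), which maps to \(\alpha(p_1)=p_2\), and so on through \(C_r\). The last letter \(\beta^{-1}(p_r)\) maps to \(\alpha(p_r)=u_1\), after which we follow \(u_1\mapsto u_2\mapsto\cdots\mapsto u_{x-1}\) and close up. All remaining letters are visited exactly once, giving one cycle of length \(\sum|C_i|-1\).

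The hypothesis \(x\ge2\) ensures \(u_0\ne u_1\), so \(\alpha\) is a genuine \((r+2)\)-cycle; when \(x=2\) we have \(u_{x-1}=u_1\), and the trace still closes correctly. The degenerate case \(r=0\) gives \(\alpha=(u_0\,u_1)\), which splits \(C_0\) into \(u_0\) and an \((x-1)\)-cycle, consistent with the formula. The only real care needed is the right-to-left convention and the edge cases \(x=2\) and \(r=0\); otherwise the argument is routine.
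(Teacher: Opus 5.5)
Your proof is correct and is essentially the paper's own argument: you trace the orbits of \(\alpha\beta\) (right to left) through \(C_0,\ldots,C_r\), using that \(\alpha\beta\) agrees with \(\beta\) except at the \(\beta\)-predecessors of the letters moved by \(\alpha\), and your explicit treatment of the edge cases \(x=2\) and \(r=0\) is a welcome addition. One small slip in your side remark on part (a): the relevant component of \(\Gamma(\alpha,\beta)\) is not a star in general, since each white vertex \(C_i\) carries \(|C_i|-1\) pendant fixed-point leaves in addition to its edge to the \((r+1)\)-cycle; it is nevertheless a tree with \(1+\sum|C_i|\) vertices and \(\sum|C_i|\) edges, so your appeal to Proposition~\ref{prop:incidence-forest} still gives the conclusion.
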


\begin{proof}
For (a), start at \(p_0\).  Under \(\alpha\beta\), the orbit first follows
\(C_0\) until the predecessor of \(p_0\); its next image is
\(\alpha(p_0)=p_1\).  It then follows \(C_1\) until the predecessor of \(p_1\),
then jumps to \(p_2\), and so on.  After \(C_r\) it jumps back to \(p_0\).
Thus the chosen cycles are joined into one cycle, and no other letter is
affected by \(\alpha\).

For (b), \((\alpha\beta)(u_0)=\alpha(u_1)=u_0\), so \(u_0\) is fixed.  Starting
at \(u_1\), the orbit follows \(C_0\) through
\[
  u_1,u_2,\ldots,u_{x-1}
\]
and then jumps from the predecessor of \(u_0\) to \(\alpha(u_0)=p_1\).  It then
follows \(C_1\), jumps to \(p_2\), and continues through all \(C_i\).  At the
end of \(C_r\) it jumps to \(u_1\).  This gives the claimed remaining cycle.
\end{proof}

\begin{lemma}[Genus-lowering product]\label{lem:genus-lowering-product}
Let
\(F=[c_1,\ldots,c_s,1^f]\vdash d\),
where \(s\ge1\), \(f\ge1\), and \(c_u\ge2\) for
\(1\le u\le s\), and put \(a=b(F)=\sum_{u=1}^s(c_u-1)\).
Let \(B=[x_1,\ldots,x_n]\vdash d\) be fixed-point-free and gcd-one.  If
\[
  n\ge a,
\]
then there exists a partition \(M\vdash d\) such that
\[
  \Cl_M\subset \Cl_F\Cl_B,
  \qquad
  g(M)=1,
  \qquad
  b(M)=b(B)+a-2.
\]
\end{lemma}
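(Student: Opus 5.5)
The plan is to build explicit permutations \(\alpha\in\Cl_F\) and \(\beta\in\Cl_B\) whose product has the required type, using the two local rules of Lemma~\ref{lem:local-rules}. We assemble the nontrivial cycles of \(\alpha\) one at a time, composing from right to left. Fix \(\beta\) of type \(B\). One nontrivial cycle of \(\alpha\) is used in the splitting form, rule~(b); it creates a fixed point, and that fixed point gives \(g(M)=1\). The remaining nontrivial cycles of \(\alpha\) are used in the merging form, rule~(a). All other letters are fixed by \(\alpha\), so \(\alpha\) has exactly \(f\) fixed points and type \(F\).

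\textbf{Step 1: the splitting step.} Since \(B\) is fixed-point-free and gcd-one, it is not \([2^{d/2}]\), so it has a part \(x\ge3\). Let \(C_0\) be a cycle of \(\beta\) of that length, with adjacent letters \(u_0,u_1\). Take \(c_1\) to be any nontrivial part of \(F\) and put \(r=c_1-2\ge0\). Choose \(r\) further cycles \(C_1,\ldots,C_r\) of \(\beta\), letters \(p_i\in C_i\), and set \(\alpha_1=(u_0\,p_1\cdots p_r\,u_1)\). By Lemma~\ref{lem:local-rules}(b), \(\alpha_1\beta\) turns these \(c_1-1\) cycles into the fixed point \(u_0\) and one cycle \(Z\) of length \(\sum_{i=0}^r|C_i|-1\). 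Hence the cycle count changes by \(2-(c_1-1)=-(c_1-3)\). The letters of \(\alpha_1\) lying in \(Z\) are \(u_1,p_1,\ldots,p_r\), which is \(r+1\) letters. Because every \(|C_i|\ge2\) and \(|C_0|\ge3\), we get \(|Z|\ge 2r+2>r+1\), so \(Z\) contains a letter not moved by \(\alpha_1\).

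\textbf{Step 2: the merging steps.} For each remaining nontrivial part \(c_u\) with \(u\ge2\), let \(\alpha_u=(z\,q_1\cdots q_{c_u-1})\). Here \(z\) is a letter of the current merged cycle not yet moved by earlier \(\alpha\)'s, and the \(q_j\) come from \(c_u-1\) fresh cycles of \(\beta\). By Lemma~\ref{lem:local-rules}(a), applied to the current product, these \(c_u\) cycles merge into one, which lowers the cycle count by \(c_u-1\). Each fresh cycle has length at least \(2\) but contributes only one moved letter, so the merged cycle always keeps an unused letter for the next step. The cycles of \(\alpha\) are pairwise disjoint by construction.

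\textbf{Step 3: counting.} The total number of fresh \(\beta\)-cycles consumed is
\[
(c_1-1)+\sum_{u\ge2}(c_u-1)=a,
\]
so the hypothesis \(n\ge a\) is exactly what is needed. The total drop in the number of cycles is \((c_1-3)+\sum_{u\ge2}(c_u-1)=a-2\). Therefore \(\ell(M)=n-a+2\), which gives \(b(M)=b(B)+a-2\). This formula also covers \(a=1\), where \(\ell(M)=n+1\). The part \(1\) coming from \(u_0\) gives \(g(M)=1\), and \(M\) is the type of \(\alpha\beta\), so \(\Cl_M\subset\Cl_F\Cl_B\).

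\textbf{Main obstacle.} The delicate point is the availability of an unused letter for chaining after the splitting step when \(c_1=2\). In that case \(Z\) has length \(x-1\) and already contains \(u_1\). This is why \(C_0\) is taken to be a cycle of length at least \(3\), which exists by the gcd-one, fixed-point-free hypothesis.
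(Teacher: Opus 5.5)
Your proof is correct and is essentially the paper's construction: one cycle of \(\alpha\) splits off a fixed point via Lemma~\ref{lem:local-rules}(b), and the remaining cycles are chained on via Lemma~\ref{lem:local-rules}(a), consuming exactly \(a\) cycles of \(\beta\). The only difference is that you always start the split on a \(\beta\)-cycle of length at least \(3\) and track spare letters by counting. This handles uniformly what the paper treats as two cases, ``some \(c_u\ge3\)'' and ``all \(c_u=2\)''.
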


\begin{proof}
Choose a permutation \(\beta\) of type \(B\).  In each of the cases below, we
select \(a\) cycles of \(\beta\), denoted \(D_1,\ldots,D_a\), and construct a
permutation \(\alpha\) of type \(F\).
The nontrivial cycles of \(\alpha\) will be denoted
\(\alpha_1,\ldots,\alpha_s\), with \(|\alpha_u|=c_u\); all remaining letters
are fixed by \(\alpha\).
Once \(\alpha_1,\ldots,\alpha_u\) have been chosen, write
\[
  \beta_u=\alpha_u\cdots\alpha_1\beta.
\]

\medskip
\noindent\textbf{Case 1: some \(c_u\ge3\).}
Choose any \(a\) cycles \(D_1,\ldots,D_a\) of \(\beta\).
Relabel so that \(c_1\ge3\), and define
\[
  r_u=(c_1-1)+\sum_{v=2}^u(c_v-1)\qquad(1\le u\le s).
\]
Thus \(r_1=c_1-1\) and \(r_s=a\).  Pick two adjacent letters
\(u_0,u_1\) in \(D_1\), and one letter in each of
\[
  D_2,\ldots,D_{c_1-1}.
\]
Let \(\alpha_1\) be the cycle prescribed by
Lemma~\ref{lem:local-rules}(b).  In \(\beta_1\), the cycles
\[
  D_1,\ldots,D_{c_1-1}
\]
have become one fixed point and one further cycle \(R_1\).  If \(s>1\), the
cycle \(D_{r_1}\) has contributed only one letter to \(\alpha_1\), so choose
and mark a second letter of \(D_{r_1}\) as the spare letter on \(R_1\).

Suppose \(\alpha_1,\ldots,\alpha_{u-1}\) have been chosen for some
\(2\le u\le s\), such that \(\beta_{u-1}\) has the fixed point created in the
first step and a cycle \(R_{u-1}\) supported on the previously touched
cycles of \(\beta\), and suppose a marked spare letter of
\(D_{r_{u-1}}\) lies on
\(R_{u-1}\).  For \(\alpha_u\), use that marked spare letter and one letter
from each of
\[
  D_{r_{u-1}+1},\ldots,D_{r_u}.
\]
This is exactly \(1+(r_u-r_{u-1})=c_u\) letters.  Let \(\alpha_u\) be the join
cycle from Lemma~\ref{lem:local-rules}(a).  It joins the new
cycles of \(\beta\) to \(R_{u-1}\), leaves the fixed point
untouched, and produces in \(\beta_u\) a
cycle \(R_u\) supported on
all cycles of \(\beta\) touched so far.  If \(u<s\), the
cycle \(D_{r_u}\) has just contributed its first letter, so choose and mark a
second letter of \(D_{r_u}\) as the spare letter for the next step.

The construction touches exactly
\[
  (c_1-1)+\sum_{u=2}^s(c_u-1)=a
\]
cycles of \(\beta\), namely \(D_1,\ldots,D_a\).  It uses two
letters from \(D_1\)
and from each connector \(D_{r_u}\) with \(1\le u<s\), and one letter from
every other touched cycle.  Since \(B\) is fixed-point-free, all these choices
can be made with distinct letters.  Thus \(\alpha\) has cycle type \(F\).

\medskip
\noindent\textbf{Case 2: all \(c_u=2\).}
Here \(a=s\).  If \(a=1\), choose any cycle
\(D_1\) of \(\beta\), choose adjacent letters
\(u_0,u_1\in D_1\), and set \(\alpha_1=(u_0\,u_1)\).
Lemma~\ref{lem:local-rules}(b), with \(r=0\), gives two product cycles on
\(D_1\), one of which is the fixed point \(u_0\).  There is no further cycle
\(\alpha_u\) to choose, so the construction is complete.

Now suppose \(a>1\).  Then \(B\) has a part at least \(3\), since otherwise
\(B=[2^n]\) would have gcd \(2\).
Choose \(D_1\) to be a cycle of \(\beta\) of length at least
\(3\), and choose any further \(a-1\) cycles
\(D_2,\ldots,D_a\) of \(\beta\).  Choose adjacent letters
\(u_0,u_1\in D_1\), and set
\(\alpha_1=(u_0\,u_1)\).  The permutation \(\beta_1\) has a fixed point and
one further cycle \(R_1\) supported on \(D_1\).  The first transposition uses
two letters of \(D_1\), and \(|D_1|\ge3\) lets us mark a third letter of
\(D_1\) as the spare letter on \(R_1\).

For \(u=2,\ldots,s\), let \(\alpha_u\) use the marked spare letter of
\(D_{u-1}\) and one new letter of \(D_u\).  By
Lemma~\ref{lem:local-rules}(a), this joins \(D_u\) to the cycle
\(R_{u-1}\), leaves the fixed point untouched, and produces a cycle \(R_u\)
supported on \(D_1,\ldots,D_u\).  If \(u<s\), mark a second letter of \(D_u\)
as the spare letter for the next step.

In both subcases the construction touches exactly \(D_1,\ldots,D_a\).  When
\(a>1\), it uses three letters from \(D_1\), two from each of
\(D_2,\ldots,D_{a-1}\), and one from \(D_a\); when \(a=1\), it uses two
letters from \(D_1\).  The only cycle contributing three letters was chosen
with \(|D_1|\ge3\), and every cycle contributing two letters has length at
least \(2\).  Hence all chosen letters are distinct, and \(\alpha\) has cycle
type \(F\).

The product \(\alpha\beta\) has replaced the \(a\) touched
cycles of \(\beta\) by exactly two cycles: the fixed point created in the
first step and one further cycle supported on the letters of the touched
cycles of \(\beta\), whose length may be \(1\) only in the case \(a=1\) and
\(|D_1|=2\).  Untouched cycles of \(\beta\) remain unchanged.  Therefore the
total
number of cycles satisfies
\[
  \ell(M)=\ell(B)-a+2.
\]
Consequently,
\[
  b(M)=b(B)+a-2.
\]
The product contains a fixed point, hence \(g(M)=1\).

Since we have constructed one product of type \(M\), conjugating the factors
shows \(\Cl_M\subset \Cl_F\Cl_B\).
\end{proof}

\subsection{Preserving the lcm condition}

\begin{lemma}[Fixed-point defect bound]\label{lem:fixed-lcm-defect}
Let \(G\vdash d\) have at least one fixed point and lcm \(L=m(G)\).  Then
\[
  b(G)\le (d-1)\left(1-\frac1L\right).
\]
In particular, if \(m(G)=2\), then \(b(G)\le (d-1)/2\).
\end{lemma}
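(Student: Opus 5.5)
Write \(G=[g_1,\ldots,g_\ell]\) and bound the defect part by part:
\[
  b(G)=\sum_{i=1}^{\ell}(g_i-1).
\]
Every part \(g_i\) divides \(L\), so \(g_i\le L\). Hence each part satisfies
\[
  g_i-1\le g_i\left(1-\frac1L\right),
\]
because this inequality is equivalent to \(g_i/L\le1\). Equality holds exactly when \(g_i=L\).

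For the fixed-point part, which exists by hypothesis, do better. A part \(g_i=1\) contributes \(0\) to \(b(G)\), while the right side above is \(1-1/L\). So we drop it from the sum. Summing the per-part inequality over all the other parts, whose total size is \(d-1\), gives
\[
  b(G)\le (d-1)\left(1-\frac1L\right),
\]
as claimed. Sorting the parts (fixed point, then non-fixed parts) needs no care: the bound holds part by part, and only the removal of one part of size \(1\) is used.

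For the special case \(m(G)=2\), substitute \(L=2\) to get \(b(G)\le(d-1)/2\). As a sanity check, all parts are then \(1\) or \(2\), so \(G=[2^j,1^{d-2j}]\) with \(d-2j\ge1\) and \(b(G)=j\le(d-1)/2\).

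I do not expect any real obstacle. The only point needing attention is to use the fixed point by removing exactly one unit part from the count, rather than \(d\) in total.
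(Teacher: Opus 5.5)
Your proof is correct and is essentially the paper's argument. Both set aside one fixed point and use that every remaining part is at most \(L\). The paper phrases this as the part count \(\ell(G)\ge 1+\lceil (d-1)/L\rceil\), while you sum the equivalent per-part inequality \(g_i-1\le g_i(1-1/L)\) over the remaining parts, whose total size is \(d-1\).
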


\begin{proof}
All non-fixed parts of \(G\) divide \(L\), hence are at most \(L\).  Since
\(G\) has a fixed point,
\[
  \ell(G)\ge 1+\left\lceil\frac{d-1}{L}\right\rceil.
\]
Thus
\[
  b(G)=d-\ell(G)
  \le d-1-\left\lceil\frac{d-1}{L}\right\rceil
  \le (d-1)\left(1-\frac1L\right).
\]
\end{proof}

\begin{lemma}[Excluding an additional lcm-\(2\) branch]\label{lem:no-leftover-lcm2}
Let \(\Lambda\) be a mixed residual datum, and let \((F,B)\) be a mixed
A1-small pair in \(\Lambda\), where \(F\) has fixed points, \(a=b(F)\), and
\(B\) is fixed-point-free gcd-one.  Then no other fixed-point branch has lcm
\(2\).
\end{lemma}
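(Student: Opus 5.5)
The plan is to argue by contradiction, using the defining property of a mixed residual datum that every A1-small pair is mixed. Suppose some branch \(G\ne F\) of \(\Lambda\) has a fixed point and \(m(G)=2\). I will show that \((F,G)\) is A1-small. It would then be an A1-small pair made of two fixed-point branches. Case~(iii) of Proposition~\ref{prop:standard-residual} forbids this, since there every A1-small pair has the form (fixed-point branch, fixed-point-free branch).

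\textbf{Bound on \(a\).} Since \((F,B)\) is a mixed A1-small pair, Proposition~\ref{prop:standard-residual}(iii) gives \(a+b(B)\le d-3\). Lemma~\ref{lem:fpf-defect} gives \(b(B)\ge\lfloor d/2\rfloor+1\). Hence
\[
  a\le d-4-\left\lfloor\frac d2\right\rfloor\le \frac{d-1}{2}-3<\frac{d-1}{2}.
\]
The middle inequality holds because \(\lfloor d/2\rfloor\ge(d-1)/2\).

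\textbf{Bound on \(b(G)\).} By Lemma~\ref{lem:fixed-lcm-defect}, \(b(G)\le(d-1)/2\).

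\textbf{Conclusion.} Adding the two bounds gives \(a+b(G)<d-1\), so \((F,G)\) is A1-small, with defect sum in fact at most \(d-4\). This contradicts mixedness. One consistency point needs checking: the contradiction must not depend on whether \(G\) satisfies the \(d-3\) bound. It does not, because case (iii) constrains every A1-small pair by the shape requirement alone. Alternatively, one could invoke Proposition~\ref{prop:fixed-fixed} directly: it would give case (ii), which is excluded in a mixed residual datum.

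I expect no real obstacle. The only point needing care is to use the strengthened bound \(d-3\) rather than just \(d-1\). With only \(d-1\), one would get \(a\le d-2-\lfloor d/2\rfloor\), which is roughly \((d-1)/2\) and fails to give a strict inequality when \(d\) is odd.
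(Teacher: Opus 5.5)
Your proof is correct and follows the paper's argument. You bound $a$ from the mixed pair's defect inequality, bound $b(G)\le (d-1)/2$ by Lemma~\ref{lem:fixed-lcm-defect}, and conclude that $(F,G)$ would be a forbidden fixed/fixed A1-small pair. The paper bounds $a$ via $\ell(B)\le d/2$ instead of Lemma~\ref{lem:fpf-defect}, which makes no difference.

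Your closing remark is inaccurate, however. The plain A1-small bound $a+b(B)\le d-1$ already gives $a\le (d-3)/2$, hence $a+b(G)\le d-2$, so the $d-3$ strengthening is not needed. Also, A1-smallness only requires $\le d-1$, not a strict inequality.
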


\begin{proof}
Proposition~\ref{prop:standard-residual} gives
\[
  a+b(B)\le d-3
\]
and shows that no pair of fixed-point branches is A1-small.  Together with
\(b(B)=d-\ell(B)\), the displayed inequality gives
\(\ell(B)\ge a+3\).  Since \(B\) is
fixed-point-free, \(\ell(B)\le d/2\), so
\[
  a\le d/2-3.
\]
If another fixed-point branch \(G\) had \(m(G)=2\), then
Lemma~\ref{lem:fixed-lcm-defect} would give \(b(G)\le(d-1)/2\).  Since no
fixed/fixed pair is A1-small, \(a+b(G)>d-1\), hence \(a>(d-1)/2\), contradicting
\(a\le d/2-3\).
\end{proof}

\begin{lemma}[Separate avoidance of lcm \(2\) and \(3\)]\label{lem:avoid-two-three}
Under the hypotheses of Lemma~\ref{lem:genus-lowering-product}, suppose in
addition that \(n=\ell(B)\ge a+2\).  There is a choice of the
genus-lowering product for which \(m(M)\ne2\), and there is a possibly different
choice for which \(m(M)\ne3\).  These assertions do not require a single choice
that avoids both values.
\end{lemma}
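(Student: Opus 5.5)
The plan is to exploit the freedom in Lemma~\ref{lem:genus-lowering-product} to choose which \(a\) cycles \(D_1,\ldots,D_a\) of \(\beta\) are touched. Its proof shows that the product type is
\[
  M=\bigl[\,1,\;L,\;x_t\ (t\in T)\,\bigr],\qquad L=\sum_{i\le a}|D_i|-1,
\]
where \(T\) indexes the untouched cycles of \(\beta\). Since \(n\ge a+2\), we have \(|T|\ge2\), so at least one cycle can be reserved in advance and kept untouched. Every untouched part survives verbatim in \(M\). Hence \(m(M)\ne t\) follows as soon as some untouched part \(x_t\), or the merged length \(L\), does not divide \(t\). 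The only constraint on the choice is the one in Case~2 of the construction: when all \(c_u=2\) and \(a>1\), the first touched cycle \(D_1\) must have length at least \(3\). When \(a=1\) any cycle may serve as \(D_1\), and in Case~1 there is no length requirement.

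\textbf{Avoiding \(m(M)=2\).} If \(a\ge2\), then \(L\ge 2+2-1=3\), because every part of \(B\) is at least \(2\). So \(M\) has a part \(\ge3\), which cannot divide \(2\), and any choice works. If \(a=1\), we use the fact that \(B\) is gcd-one and fixed-point-free, so it has an odd part \(x\ge3\). We keep one such cycle untouched and take \(D_1\) to be any other cycle; this is possible since \(n\ge3\). Then \(x\mid m(M)\), so \(m(M)\ne2\).

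\textbf{Avoiding \(m(M)=3\).} Since \(B\) is gcd-one, it is not \([3^n]\), so it has a part \(x\ne3\) with \(x\ge2\). Such an \(x\) does not divide \(3\), so the aim is to keep one such cycle untouched. We then choose the \(a\le n-2\) touched cycles among the remaining ones. This is unproblematic in Case~1 and whenever \(a=1\).

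In Case~2 with \(a>1\), we additionally need a remaining cycle of length \(\ge3\) to serve as \(D_1\). If one exists, we are done. Otherwise every part of \(B\) other than the reserved one equals \(2\), and gcd one forces the reserved part to be odd with \(x\ge5\). In that situation we swap roles: we use the \(x\)-cycle as \(D_1\) and keep a \(2\)-cycle untouched instead. Since \(2\nmid3\), this again gives \(m(M)\ne3\).

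I expect the only real obstacle to be this compatibility between reserving a witness cycle and the Case~2 requirement \(|D_1|\ge3\). The swap argument above is meant to dispose of it. Throughout, the hypothesis \(n\ge a+2\) is used only to guarantee enough untouched cycles: one reserved witness, with at least one further cycle available for the swap.
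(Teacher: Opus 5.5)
Your proposal is correct and follows essentially the paper's route: you reserve an untouched witness part, which survives verbatim in \(M\). In the case \(c_1=\cdots=c_s=2\), \(a>1\), where the first touched cycle must have length at least \(3\), you swap to leaving a \(2\)-cycle untouched instead. The only difference is in avoiding \(m(M)=2\) when \(a\ge2\): you observe that the merged part \(\sum_{i\le a}|D_i|-1\ge3\) for every choice, which is slightly cleaner than the paper's reservation argument with its separate exceptional case.
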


\begin{proof}
In the genus-lowering construction, the \(a\) selected parts of \(B\) are
replaced by a fixed point and a part of size \(S-1\), where \(S\) is their
sum; the unselected parts of \(B\) remain as parts of \(M\).

First avoid \(m(M)=2\).  Since \(B\) is fixed-point-free and gcd-one, it has a
part \(x>2\).  The construction selects exactly \(a\) parts of \(B\), and
\(n\ge a+2\).  Thus \(x\) can be left unselected unless
\(c_1=\cdots=c_s=2\), \(a>1\), and \(x\) is the unique part
of \(B\) greater than \(2\); in that exceptional case, the initial step
requires the cycle corresponding to \(x\).  If \(x\) is left unselected, then
it remains a part of \(M\), so \(m(M)>2\).  In the exceptional case, select
\(x\) and \(a-1\) parts equal to \(2\).  Then
\[
  S-1=x+2(a-1)-1\ge4,
\]
so again \(m(M)>2\).  When \(a=1\), one may select a part different from \(x\)
and leave \(x\) unselected, since \(n\ge3\).

Next avoid \(m(M)=3\).  Since \(g(B)=1\), some part \(y\) of \(B\) is not
divisible by \(3\).  If \(y\) can be left unselected, then it remains a part of
\(M\), and hence \(m(M)\ne3\).  The only possible obstruction again occurs in
the case \(c_1=\cdots=c_s=2\), when its initial step must
use the cycle corresponding to the unique part of \(B\) greater than \(2\).
In that case an unselected part equal to \(2\) remains because \(n\ge a+2\).
Thus \(2\) is a part of \(M\), and again \(m(M)\ne3\).
\end{proof}

\begin{lemma}[Preservation of the lcm condition]\label{lem:mixed-lcm-condition}
Let \(\Lambda\) be a positive-genus mixed residual datum in
case~\textnormal{(iii)} of Proposition~\ref{prop:standard-residual}, and let
\((F,B)\) be a mixed A1-small pair in \(\Lambda\), where \(F\) has fixed
points and \(B\) is fixed-point-free.  Put \(a=b(F)\).  Then \((F,B)\)
satisfies the hypotheses of Lemma~\ref{lem:genus-lowering-product}, with
\[
  \ell(B)\ge a+3.
\]
Moreover, the genus-lowering reduction of \((F,B)\) to an output \(M\) can be
chosen so that the resulting datum satisfies the lcm condition.
\end{lemma}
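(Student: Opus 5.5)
The plan has two parts. First I verify the hypotheses of Lemma~\ref{lem:genus-lowering-product} together with the length bound. Then I do a case analysis on the number of branches of the reduced datum, using Lemmas~\ref{lem:no-leftover-lcm2}, \ref{lem:avoid-two-three} and~\ref{lem:fpf-lcm}.

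\textbf{Hypotheses.} By Proposition~\ref{prop:standard-residual}(iii), $a+b(B)\le d-3$. Since $b(B)=d-\ell(B)$, this gives $\ell(B)\ge a+3\ge a$. Write $F=[c_1,\ldots,c_s,1^f]$ with $c_u\ge2$. Then $f\ge1$ because $F$ has fixed points, and $s\ge1$ because $F$ is nontrivial. By definition of a mixed pair, $B$ is fixed-point-free, and it is gcd-one because it is inherited from a Zieve-admissible datum. So Lemma~\ref{lem:genus-lowering-product} applies. Its output satisfies $g(M)=1$ and $b(M)=b(F)+b(B)-2$, so the reduced datum has total defect $2(d+h-2)$ with $h-1\ge0$ and is a candidate datum, provided $M$ is nontrivial. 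Nontriviality follows from $b(M)=b(B)+a-2\ge\lfloor d/2\rfloor+1+1-2\ge1$, using Lemma~\ref{lem:fpf-defect} and $a\ge1$. It remains only to check the lcm condition, and the reduced datum has $k-1\ge3$ branches.

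\textbf{Five or more branches.} If $k-1\ge5$, every nontrivial branch contributes at least $1/2$, so the sum exceeds $2$ and the condition holds automatically.

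\textbf{Four branches.} If $k-1=4$, the condition fails only when all four lcms equal $2$. The three inherited branches are not all of lcm $2$. Indeed, any inherited fixed-point-free branch is gcd-one, so it has lcm at least $6$ by Lemma~\ref{lem:fpf-lcm}. Any inherited fixed-point branch other than $F$ cannot have lcm $2$ by Lemma~\ref{lem:no-leftover-lcm2}. So any choice of $M$ works.

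\textbf{Three branches.} If $k-1=3$, let $u,v$ be the lcms of the two inherited branches. Each is either fixed-point-free, hence of lcm at least $6$, or a fixed-point branch of lcm not equal to $2$. The condition fails only for the lcm triples $(3,3,3)$, $(2,4,4)$ and $(2,3,6)$. Since neither inherited lcm can be $2$, the pair $(u,v)$ must be $(3,3)$, $(4,4)$ or $(3,6)$. These force the forbidden values $m(M)=3$, $2$ and $2$ respectively, so exactly one value $t\in\{2,3\}$ is forbidden. Lemma~\ref{lem:avoid-two-three} applies because $\ell(B)\ge a+3\ge a+2$, and it provides a genus-lowering product with $m(M)\ne t$. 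With that choice the condition holds.

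\textbf{Main obstacle.} The delicate step is the three-branch case: one has to rule out the inherited lcm $2$ so that at most one forbidden value remains, and that value must lie in $\{2,3\}$, where Lemma~\ref{lem:avoid-two-three} applies separately. I would double-check that Lemma~\ref{lem:no-leftover-lcm2} covers every inherited fixed-point branch. It does, because it applies to every fixed-point branch other than $F$.
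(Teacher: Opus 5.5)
Your proof is correct and follows the paper's argument. You use the same derivation of $\ell(B)\ge a+3$ from Proposition~\ref{prop:standard-residual}, the same five-branch count, and the same three-branch reduction to $(u,v)\in\{(3,3),(4,4),(3,6)\}$, which is then handled by Lemma~\ref{lem:avoid-two-three}. The only difference is the four-branch case: the paper invokes Lemma~\ref{lem:avoid-two-three} to force $m(M)\ne2$, while you observe, as the paper's own introduction does, that Lemmas~\ref{lem:fpf-lcm} and~\ref{lem:no-leftover-lcm2} already rule out lcm $2$ for every inherited branch, so any genus-lowering choice of $M$ works.
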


\begin{proof}
Proposition~\ref{prop:standard-residual} gives
\[
  a+b(B)\le d-3.
\]
Since \(b(B)=d-\ell(B)\), it follows that
\[
  \ell(B)\ge a+3.
\]
Thus Lemma~\ref{lem:genus-lowering-product} applies, and the stronger bound
also allows us to use Lemma~\ref{lem:avoid-two-three}.

Every output \(M\) of the genus-lowering construction is
nontrivial.  Indeed, a fixed-point-free gcd-one partition \(B\) has at least
two parts, so \(b(B)\ge2\); since \(a=b(F)\ge1\),
Lemma~\ref{lem:genus-lowering-product} gives
\(b(M)=b(B)+a-2\ge1\).  In particular, \(m(M)\ge2\).

After the reduction, if the resulting datum has at least five
branches, let \(m_i\) denote the lcm of its \(i\)-th branch
partition.  Then the sum in the lcm condition satisfies
\[
  \sum_i \left(1-\frac1{m_i}\right)\ge 5\cdot\frac12>2,
\]
so the lcm condition holds.

If it has four branches, equality \(2\) can occur only when all four lcms are
\(2\).  Choose the genus-lowering output supplied by
Lemma~\ref{lem:avoid-two-three} with \(m(M)\ne2\).  Then the resulting
four-branch datum satisfies the lcm condition.

It remains to consider a resulting triple.  Up to order, the
lcm condition fails exactly for the following triples of lcms:
\[
  (3,3,3),\qquad (2,4,4),\qquad (2,3,6).
\]
By Lemma~\ref{lem:no-leftover-lcm2}, an untouched fixed-point branch cannot
have lcm \(2\), while an untouched fixed-point-free branch has lcm at least
\(6\) by Lemma~\ref{lem:fpf-lcm}.  Thus
the lcm condition can fail only in the following
configurations:
\begin{enumerate}[label=(\roman*)]
\item the two untouched branches have lcms \(3,3\), requiring \(m(M)=3\);
\item the two untouched branches have lcms \(4,4\), requiring \(m(M)=2\);
\item the two untouched branches have lcms \(3,6\), requiring \(m(M)=2\).
\end{enumerate}
The two untouched branches are the original branches outside the reduction
pair \((F,B)\).  Their partitions, and therefore their lcms, are fixed before
the parts of \(B\) used in the construction of
Lemma~\ref{lem:genus-lowering-product} are selected; only the new branch \(M\)
depends on that choice.

If the lcms of the two untouched branches are not \((3,3)\), \((4,4)\), or
\((3,6)\), every genus-lowering choice of \(M\) satisfies the lcm condition.  If
they are \((3,3)\), use the choice with \(m(M)\ne3\) supplied by
Lemma~\ref{lem:avoid-two-three}.  If they are \((4,4)\) or \((3,6)\), use its
choice with \(m(M)\ne2\).  Thus the reduced triple satisfies the lcm condition
in every case.
\end{proof}

\begin{corollary}[Positive-genus mixed residual reduction]\label{cor:positive}
Let \(\Lambda\) be a mixed residual datum as in
Proposition~\ref{prop:standard-residual}, of source genus \(h\ge1\), and let
\((F,B)\) be a mixed A1-small pair.  Then \(\Lambda\)
reduces to a Zieve-admissible datum with one fewer branch and source genus
\(h-1\).
\end{corollary}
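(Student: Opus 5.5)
The corollary follows by assembling the results of this section. We apply the genus-lowering product of Lemma~\ref{lem:genus-lowering-product} to the mixed A1-small pair \((F,B)\), and then use Lemma~\ref{lem:mixed-lcm-condition} to choose the output \(M\) so that the lcm condition survives.

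\textbf{Hypotheses of the genus-lowering product.} By Proposition~\ref{prop:standard-residual}, \(F\) has fixed points, so \(f\ge1\), and \(F\) is nontrivial, so \(s\ge1\). The branch \(B\) is fixed-point-free and, being inherited from \(\Lambda\), gcd-one. By Lemma~\ref{lem:mixed-lcm-condition}, \(\ell(B)\ge a+3\ge a\) with \(a=b(F)\). Hence Lemma~\ref{lem:genus-lowering-product} applies to \((F,B)\). Within the freedom of that construction, we take the specific choice of \(M\) given by Lemma~\ref{lem:mixed-lcm-condition}, so that the reduced datum satisfies the lcm condition. This choice still satisfies
\[
  \Cl_M\subset\Cl_F\Cl_B,\qquad g(M)=1,\qquad b(M)=b(F)+b(B)-2.
\]
The point to verify here is that the choices made in Lemma~\ref{lem:avoid-two-three} are genuinely instances of the construction in Lemma~\ref{lem:genus-lowering-product}, so that the gcd and defect conclusions persist. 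This is how that lemma is phrased, since it only varies which parts of \(B\) are selected.

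\textbf{Bookkeeping on the reduced datum.} Let \(\Lambda'\) be obtained from \(\Lambda\) by replacing \(F,B\) with \(M\). It has \(k-1\) branches, and its total defect is
\[
  v(\Lambda')=v(\Lambda)-2=2(d+h-1)-2=2\bigl(d+(h-1)-1\bigr),
\]
with \(h-1\ge0\) because \(h\ge1\). The proof of Lemma~\ref{lem:mixed-lcm-condition} shows that \(M\) is nontrivial, so \(\Lambda'\) is a candidate datum of source genus \(h-1\). Its branches are gcd-one: \(M\) by construction, and every other branch because it is inherited from the Zieve-admissible \(\Lambda\). Together with the lcm condition secured above, this makes \(\Lambda'\) Zieve-admissible.

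\textbf{Where care is needed.} The only delicate point is making the nontriviality and genus arithmetic explicit, which guarantees that \(\Lambda'\) is a legitimate candidate datum rather than merely a formal multiset. The phrase ``reduces to'' is then justified by the inclusion \(\Cl_M\subset\Cl_F\Cl_B\), which is exactly the hypothesis of Proposition~\ref{prop:expansion}. Consequently, realizability of \(\Lambda'\), which will later come from strong induction on \(k\), implies realizability of \(\Lambda\).
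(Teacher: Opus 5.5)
Your proposal is correct and follows essentially the same route as the paper's proof. You take the genus-lowering output of Lemma~\ref{lem:genus-lowering-product} chosen via Lemma~\ref{lem:mixed-lcm-condition} so that the lcm condition holds, note that the total defect drops by exactly \(2\) so the genus becomes \(h-1\ge0\), and check that \(M\) is nontrivial (since \(b(M)=b(B)+a-2\ge1\)) and gcd-one, so the reduced datum is Zieve-admissible.
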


\begin{proof}
Put \(a=b(F)\).
By Lemma~\ref{lem:mixed-lcm-condition}, the genus-lowering output \(M\) may be
chosen so that replacing \(F,B\) by \(M\) preserves the lcm condition.  This
replacement reduces the number of branches by one.  Its change in total
defect is
\[
  a+b(B)-b(M)=a+b(B)-(b(B)+a-2)=2.
\]
The partition \(B\) is fixed-point-free and gcd-one, so it has
at least two parts and \(b(B)\ge2\).  Since \(a\ge1\), the equality
\(b(M)=b(B)+a-2\) gives \(b(M)\ge1\); hence \(M\) is nontrivial.
Lemma~\ref{lem:genus-lowering-product} gives \(g(M)=1\), while all untouched
branches retain the nontriviality and gcd-one properties of the original
datum.  The defect calculation therefore shows that the resulting datum is a
candidate datum of source genus \(h-1\), which is nonnegative because
\(h\ge1\).  Together with the lcm condition, these facts show that the
resulting datum is Zieve-admissible.
\end{proof}

\section{Genus-zero mixed residuals}\label{sec:residual-zero}

This section completes the reduction for mixed residual data of source genus
zero, where the genus-lowering reduction of
Section~\ref{sec:residual-positive} cannot be used.  We first show that such a
datum has exactly one fixed-point branch and exactly three fixed-point-free
branches.  We then realize Move A1 by a single-component construction and use
a gcd argument to select an output for which the reduced triple is
Zieve-admissible.  The assumed three-point case then
yields realizability of the original datum.

\begin{lemma}[Uniqueness of the fixed-point branch]\label{lem:genus-zero-unique-fixed}
In a genus-zero mixed residual datum of Proposition~\ref{prop:standard-residual},
there is exactly one fixed-point branch.
\end{lemma}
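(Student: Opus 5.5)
Existence of at least one fixed-point branch is immediate: by Proposition~\ref{prop:standard-residual}(iii), \(\Lambda\) contains a mixed A1-small pair \((F,B)\) with \(F\) having fixed points and \(B\) fixed-point-free and gcd-one. So the plan is to suppose a second fixed-point branch \(G\ne F\) exists and derive a contradiction with the genus-zero equation \(v(\Lambda)=2d-2\). Throughout put \(a=b(F)\), \(b=b(B)\).

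First record the three inequalities we will use. From case~(iii), \(a+b\le d-3\). By Lemma~\ref{lem:fpf-defect}, \(b\ge\lfloor d/2\rfloor+1\), and every other fixed-point-free branch \(B'\) also has \(b(B')\ge\lfloor d/2\rfloor+1\). From \(a+b\le d-3\) and \(b\ge\lfloor d/2\rfloor+1\) we get \(a\le d-4-\lfloor d/2\rfloor<b\), so \(b-a>0\). Next, since case~(iii) says every A1-small pair is mixed, the fixed-point pair \((F,G)\) is not A1-small, so \(a+b(G)\ge d\), i.e.\ \(b(G)\ge d-a\).

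Now split according to the remaining branches. If there is a second fixed-point-free branch \(B'\), then
\[
  v(\Lambda)\ge a+b+b(G)+b(B')\ge d+b+\lfloor d/2\rfloor+1\ge d+2\lfloor d/2\rfloor+2>2d-2,
\]
a contradiction. Hence \(B\) is the only fixed-point-free branch, and since \(k\ge4\) there are at least two fixed-point branches \(G,H\) besides \(F\). Both pairs \((F,G)\) and \((F,H)\) are non-A1-small, so \(b(G),b(H)\ge d-a\), giving
\[
  v(\Lambda)\ge a+b+2(d-a)=2d+(b-a)>2d-2,
\]
again a contradiction. Thus \(F\) is the unique fixed-point branch.

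There is no real obstacle; the only point needing care is the strictness \(b>a\) (equivalently that the fixed-point-free defect bound beats \(a\)), which follows from combining the A1-small bound \(a+b\le d-3\) with Lemma~\ref{lem:fpf-defect} as above, and the observation that non-A1-smallness of fixed/fixed pairs is guaranteed by the definition of a mixed residual datum.
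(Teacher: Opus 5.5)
Your proof is correct and follows the paper's argument essentially step for step. You derive $b>a$, use the non-A1-smallness of fixed/fixed pairs to get $b(G)\ge d-a$, rule out a second fixed-point-free branch, and then use two further fixed-point branches to force total defect $2d+(b-a)>2d-2$; the only cosmetic difference is that you obtain $b>a$ from the strengthened bound $a+b\le d-3$, whereas the paper uses only $a+b\le d-1$, which already suffices.
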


\begin{proof}
Proposition~\ref{prop:standard-residual} gives an A1-small pair \((F,B)\), where
\(F\) has fixed points and \(B\) is fixed-point-free.  Put
\[
  a=b(F),\qquad b=b(B).
\]
Then
\[
  a+b\le d-1.
\]
By Lemma~\ref{lem:fpf-defect}, \(b\ge\lfloor d/2\rfloor+1\).  Hence
\begin{equation}\label{eq:b-a-positive}
  b-a\ge 2b-d+1>0.
\end{equation}

Suppose, for contradiction, that there is another fixed-point branch.  Since no
pair of fixed-point branches is A1-small in the mixed residual
case, every other
fixed-point branch \(G\) satisfies
\begin{equation}\label{eq:no-fp-fp-small}
  a+b(G)\ge d.
\end{equation}

There cannot be another fixed-point-free branch \(B'\): if such a branch
existed, then using \eqref{eq:no-fp-fp-small} for one other fixed branch \(G\),
\[
  a+b+b(G)+b(B')
  \ge d+b+b(B')
  > 2d-2,
\]
again by Lemma~\ref{lem:fpf-defect}.  This already exceeds the genus-zero
total defect \(2d-2\).

Thus \(B\) is the only fixed-point-free branch.  Since the
datum has \(k\ge4\) branches, it then has at least three fixed-point branches.
Choose two of them other than \(F\), and denote them by \(G,H\).  Applying
\eqref{eq:no-fp-fp-small} to both gives
\[
  a+b+b(G)+b(H)
  \ge a+b+2(d-a)
  =2d+(b-a)
  >2d-2
\]
by \eqref{eq:b-a-positive}, again contradicting genus zero.  Therefore no other
fixed-point branch exists.
\end{proof}

\begin{lemma}[Number of branches in genus zero]\label{lem:genus-zero-k4}
A genus-zero mixed residual datum
\[
  (F,B_1,\ldots,B_r)
\]
with one fixed-point branch \(F\) and fixed-point-free gcd-one branches
\(B_i\) must have \(r=3\).  Equivalently, the total number of branches is
\(k=4\).
\end{lemma}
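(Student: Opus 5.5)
The argument bounds \(r\) from both sides using only the genus-zero defect equation, the fixed-point-free defect bound of Lemma~\ref{lem:fpf-defect}, and the standing hypothesis \(k\ge4\). By Lemma~\ref{lem:genus-zero-unique-fixed}, the datum has the displayed form with \(F\) the unique fixed-point branch. Every other branch is fixed-point-free and, being inherited from a Zieve-admissible datum, gcd-one. Since \(k=r+1\ge4\), we get \(r\ge3\) at once. It remains to rule out \(r\ge4\).

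For the upper bound I would add up defects. Genus zero gives
\[
  b(F)+\sum_{i=1}^r b(B_i)=2d-2 .
\]
Lemma~\ref{lem:fpf-defect} gives \(b(B_i)\ge\lfloor d/2\rfloor+1\ge (d+1)/2\) for each \(i\). Since \(F\) is nontrivial, \(b(F)\ge1\). If \(r\ge4\), the left side would be at least
\[
  1+4\cdot\frac{d+1}{2}=2d+3>2d-2,
\]
a contradiction. In fact three fixed-point-free branches alone already contribute at least \(3(d+1)/2\), so the bound is far from tight. Hence \(r=3\) and \(k=4\).

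There is no real obstacle here. The only point to check is that every \(B_i\) really satisfies the hypotheses of Lemma~\ref{lem:fpf-defect}. Each \(B_i\) is fixed-point-free because \(F\) is the only fixed-point branch, and it is gcd-one because it is inherited unchanged from the Zieve-admissible datum. The bound \(\lfloor d/2\rfloor+1\ge(d+1)/2\) holds for both parities of \(d\).
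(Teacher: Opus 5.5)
Your proof is correct and matches the paper's argument. The lower bound \(r\ge3\) comes from \(k\ge4\), and the upper bound comes from summing the genus-zero defect equation against \(b(B_i)\ge\lfloor d/2\rfloor+1\) from Lemma~\ref{lem:fpf-defect}, which pushes the total defect above \(2d-2\) once \(r\ge4\).
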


\begin{proof}
Put \(a=b(F)\ge1\).  Genus zero gives
\[
  a+\sum_{i=1}^r b(B_i)=2d-2.
\]
By Lemma~\ref{lem:fpf-defect},
\[
  b(B_i)\ge \left\lfloor\frac d2\right\rfloor+1.
\]
If \(r\ge4\), then the left-hand side is at least
\[
  a+4\left(\left\lfloor\frac d2\right\rfloor+1\right)>2d-2,
\]
for all \(d\ge2\), a contradiction.  A mixed residual datum
has at least four branches, so \(r\ge3\).  Hence \(r=3\).
\end{proof}

By Lemmas~\ref{lem:genus-zero-unique-fixed} and
\ref{lem:genus-zero-k4}, every genus-zero mixed residual datum has the form
\[
  (F,B_1,B_2,B_3).
\]
Writing \(a=b(F)\) and \(n_i=\ell(B_i)\), the Riemann--Hurwitz equation becomes
\[
  a+\sum_{i=1}^3(d-n_i)=2d-2,
\]
or equivalently
\begin{equation}\label{eq:length-sum}
  n_1+n_2+n_3=d+a+2.
\end{equation}

\subsection{A single-component realization of Move A1}

\begin{lemma}[Single-component realization of Move A1]\label{lem:single-component}
Let
\[
  F=[c_1,\ldots,c_s,1^f]\vdash d,
  \qquad s\ge1,\quad f\ge1,\quad
  c_u\ge2\quad(1\le u\le s),
\]
and put \(a=b(F)\).  Let \(B=[x_1,\ldots,x_n]\vdash d\) be
fixed-point-free.  Suppose
\(n\ge a+1\).  For any subset
\(S\subset\{1,\ldots,n\}\) with \(|S|=a+1\), there is a partition \(M\)
satisfying
\[
  \Cl_M\subset \Cl_F\Cl_B,\qquad b(M)=a+b(B),
\]
with
\[
  M=\left[\sum_{j\in S}x_j,\; x_t\ (t\notin S)\right].
\]
\end{lemma}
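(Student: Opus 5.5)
The plan is to build the required product directly as an incidence forest and then apply Corollary~\ref{cor:forest-assembly}. Black vertices will be the cycles of a permutation of type \(F\), white vertices the cycles of a permutation of type \(B\). We divide the parts of \(F\) and \(B\) into paired blocks, check the balance criterion of Lemma~\ref{lem:balanced-tree-component} for each block, and read off the component weights. By Proposition~\ref{prop:incidence-forest}, the resulting partition \(M\) then satisfies \(b(M)=b(F)+b(B)=a+b(B)\).

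\textbf{The main component.} Fix \(S\) with \(|S|=a+1\) and put \(W=\sum_{j\in S}x_j\).
\begin{itemize}
\item The white vertices are the \(a+1\) cycles indexed by \(S\).
\item The black vertices are all \(s\) nontrivial cycles \(c_1,\ldots,c_s\) of \(F\), together with \(p=W-\sum_u c_u\) fixed leaves of \(F\).
\end{itemize}
Both colour classes then have degree sum \(W\). Since \(\sum_u c_u=a+s\), the vertex count is
\[
  s+p+(a+1)=s+W-(a+s)+a+1=W+1,
\]
as Lemma~\ref{lem:balanced-tree-component} requires. We also need \(p\ge0\). Every \(x_j\ge2\) because \(B\) is fixed-point-free, and \(s\le a\), so
\[
  W\ge2(a+1)>a+s=\sum_u c_u.
\]
Hence this block is a tree component of weight \(W\).

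\textbf{The stars.} For each \(t\notin S\), form a white star: the white vertex of degree \(x_t\) joined to \(x_t\) black fixed leaves. This trivially satisfies the criterion, with weight \(x_t\).

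\textbf{Counting fixed leaves.} This is the only step that needs care, and I expect it to be the main point to check. The number of black fixed leaves still available after the main component is
\[
  f-p=\Bigl(d-\sum_u c_u\Bigr)-\Bigl(W-\sum_u c_u\Bigr)=d-W=\sum_{t\notin S}x_t.
\]
This is exactly the number the stars consume. Every part of \(F\) and of \(B\) is therefore used exactly once. The hypothesis \(n\ge a+1\) is needed only so that a subset \(S\) of size \(a+1\) exists.

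\textbf{Conclusion.} Corollary~\ref{cor:forest-assembly} now gives \(\alpha\in\Cl_F\) and \(\beta\in\Cl_B\) with \(\alpha\beta\) of type
\[
  M=\bigl[W,\;x_t\ (t\notin S)\bigr].
\]
Conjugating the factors yields \(\Cl_M\subset\Cl_F\Cl_B\). Since the incidence graph is a forest, Proposition~\ref{prop:incidence-forest} gives \(b(M)=a+b(B)\).
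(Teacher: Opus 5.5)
Your proof is correct, and it takes a different route from the paper's. The paper never invokes the forest machinery here. It builds \(\alpha\) by hand: the nontrivial cycles \(\alpha_1,\ldots,\alpha_s\) are chained through marked spare letters in the connector cycles \(D_{r_u}\), the product is computed step by step with the join rule of Lemma~\ref{lem:local-rules}(a), and the defect identity is obtained by counting cycles.

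You instead reduce everything to the degree and vertex count of Lemma~\ref{lem:balanced-tree-component}. Then Corollary~\ref{cor:forest-assembly} and Proposition~\ref{prop:incidence-forest} supply the permutations, the cycle type, and \(b(M)=a+b(B)\) automatically. The underlying factorization is the same: the paper's explicit \(\alpha,\beta\) have as incidence graph a caterpillar through the \(s\) nontrivial black vertices and the \(a+1\) selected white vertices, plus white stars, which is one instance of your forest.

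Your version is shorter and avoids the bookkeeping of distinct letters and spare letters. It also shows that \(p\ge2\), so the hypothesis \(f\ge1\) is automatic. The paper's hands-on method has the advantage of running parallel to Lemma~\ref{lem:genus-lowering-product}. There, Lemma~\ref{lem:local-rules}(b) puts two letters of one \(\beta\)-cycle into a single \(\alpha\)-cycle, creating a double edge, so the incidence graph is not a forest and your criterion would not apply. That is presumably why the paper uses the local rules for both constructions.
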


\begin{proof}
After relabeling the parts of \(B\), assume that
\(S=\{1,\ldots,a+1\}\).
Choose a permutation \(\beta\) of type \(B\), and label its cycles
\(D_1,\ldots,D_n\) so that
\[
  |D_j|=x_j\qquad(1\le j\le n).
\]
We construct the nontrivial cycles
\(\alpha_1,\ldots,\alpha_s\) of a permutation \(\alpha\) of type \(F\), so
that \(\alpha\beta\) joins precisely
the cycles \(D_1,\ldots,D_{a+1}\) of \(\beta\) into one
cycle.

For \(\alpha_1\), choose one letter from each of
\[
  D_1,\ldots,D_{c_1}
\]
and use Lemma~\ref{lem:local-rules}(a) to join them into one cycle \(R_1\) of
length \(x_1+\cdots+x_{c_1}\).  The last selected cycle \(D_{c_1}\) has used
only one letter and hence has a second unused letter on \(R_1\).

Set
\[
  r_1=c_1,\qquad
  r_u=c_1+\sum_{v=2}^u(c_v-1)\quad(2\le u\le s),
\]
so \(r_s=a+1\).  After the first step, mark a second letter of \(D_{r_1}\) as
the spare letter on \(R_1\), unless \(s=1\).

Inductively, suppose \(\alpha_1,\ldots,\alpha_{u-1}\) have joined
\[
  D_1,\ldots,D_{r_{u-1}}
\]
into one cycle \(R_{u-1}\), and that a marked spare letter of \(D_{r_{u-1}}\)
lies on \(R_{u-1}\).  For \(\alpha_u\), choose that marked spare letter and one
letter from each of
\[
  D_{r_{u-1}+1},\ldots,D_{r_u}.
\]
By Lemma~\ref{lem:local-rules}(a), multiplication by \(\alpha_u\) joins these
new cycles to \(R_{u-1}\), producing one cycle \(R_u\).  The new last cycle
\(D_{r_u}\) has used only one letter; if \(u<s\), mark a second letter of
\(D_{r_u}\) as the spare letter for the next step.

The total number of selected cycles of \(\beta\) used is
\[
  c_1+\sum_{u=2}^s(c_u-1)=1+\sum_u(c_u-1)=a+1.
\]
Each selected cycle of \(\beta\) contributes one chosen
letter,
except for the connector cycles \(D_{r_u}\), \(1\le u<s\), which contribute two
letters, one to each of two consecutive cycles of \(\alpha\).  Since \(B\) is
fixed-point-free, every cycle of \(\beta\) has length at
least \(2\), so all chosen letters are distinct.  Thus \(\alpha\) has the
required nontrivial cycle lengths
\(c_1,\ldots,c_s\), and all its remaining letters are fixed.

After all \(s\) steps, the selected cycles have been replaced by one cycle of
length \(\sum_{j\in S}x_j\), while every unselected
cycle of \(\beta\) remains unchanged.  Hence the product
has type
\[
  M=\left[\sum_{j\in S}x_j,\; x_t\ (t\notin S)\right].
\]
The number of cycles has dropped by \(a\), so
\[
  b(M)=b(B)+a.
\]
This product realizes Move A1, and the class-product inclusion follows by
conjugating the constructed factorization.
\end{proof}

For the complement \(T=\{1,\ldots,n\}\setminus S\), put
\[
  q=|T|=n-a-1.
\]
The product type in Lemma~\ref{lem:single-component} may be written as
\[
  M_T=
  \left[d-\sum_{t\in T}x_t,\; x_t\ (t\in T)\right].
\]
Therefore
\begin{equation}\label{eq:gcd-complement}
  g(M_T)=\gcd\bigl(d,\;x_t\ (t\in T)\bigr).
\end{equation}

\begin{lemma}[Failure of gcd-one selection excludes parts of size \(2\)]
\label{lem:gcd-obstruction-no-two}
Let \(B=[x_1,\ldots,x_n]\vdash d\) be fixed-point-free with \(g(B)=1\),
and let \(2\le q\le n\).  Suppose no subset
\(T\subseteq\{1,\ldots,n\}\) of size \(q\) gives
\[
  \gcd(d,x_t:t\in T)=1.
\]
Then \(B\) has no part equal to \(2\).  Consequently all parts of \(B\) are at
least \(3\), and
\[
  \ell(B)\le d/3.
\]
\end{lemma}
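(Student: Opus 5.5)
We argue by contraposition: assuming $B$ has a part equal to $2$, we exhibit a subset $T$ of size $q$ with $\gcd(d,x_t:t\in T)=1$. The key monotonicity observation is that if a subset $T_0$ with $|T_0|\le q$ already satisfies $\gcd(d,x_t:t\in T_0)=1$, then any $T\supseteq T_0$ with $|T|=q$ does as well, because adjoining further parts can only shrink the gcd, and it is already $1$. Since $q\le n$, such an enlargement always exists. So it suffices to find a set $T_0$ of at most two indices, using $q\ge2$ to guarantee $|T_0|\le q$.

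Let $x_{t_0}=2$. We split on the parity of $d$.

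If $d$ is odd, take $T_0=\{t_0\}$; then $\gcd(d,2)=1$.

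If $d$ is even, note that $g(B)=1$ forces $B$ to have an odd part $x_{t_1}$, with $t_1\ne t_0$ since $x_{t_0}=2$ is even. Take $T_0=\{t_0,t_1\}$. Then $\gcd(d,2,x_{t_1})$ divides $\gcd(2,x_{t_1})=1$.

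In both cases $|T_0|\le2\le q$, so enlarging $T_0$ to a set of size $q$ contradicts the hypothesis. Hence $B$ has no part equal to $2$.

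For the consequence, $B$ is fixed-point-free, so it has no part equal to $1$ either; every part is therefore at least $3$. Summing the parts gives $d\ge3\,\ell(B)$, that is, $\ell(B)\le d/3$.

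No step here is a real obstacle. The only points needing care are that $t_1\ne t_0$, which follows from the parity of $x_{t_0}$, and that the hypothesis $q\ge2$ is exactly what allows the two-element set $T_0$ in the even case.
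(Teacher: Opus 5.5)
Your proof is correct and follows essentially the same route as the paper: pad a gcd-one witness of size at most $q$ up to size $q$, using the part $2$ alone when $d$ is odd and the part $2$ together with an odd part (which exists since $g(B)=1$) when $d$ is even. You also correctly note the two points that need checking, namely $t_1\ne t_0$ and that $q\ge2$ permits the two-element witness.
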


\begin{proof}
If some subset \(U\) with \(|U|\le q\) satisfied
\[
  \gcd(d,x_u:u\in U)=1,
\]
then padding \(U\) to a subset \(T\) of size \(q\) would still give gcd one.
Thus every subset \(U\) with \(|U|\le q\) satisfies
\[
  \gcd\bigl(d,x_u:u\in U\bigr)>1.
\]
In particular, \(\gcd(d,x_u)>1\) for every \(u\), and
\(\gcd(d,x_u,x_v)>1\) for every pair of distinct indices \(u,v\).

If \(d\) is odd and \(B\) has a part \(2\), then \(\gcd(d,2)=1\), a one-element
witness.  If \(d\) is even and \(B\) has a part \(2\), then since \(g(B)=1\),
some part \(y\) of \(B\) is odd.  Then
\[
  \gcd(d,2,y)=1,
\]
a two-element witness, allowed because \(q\ge2\).  Hence \(B\) has no part
\(2\).  Since \(B\) is fixed-point-free, all its parts are at least \(3\), and
\(\ell(B)\le d/3\).
\end{proof}

\begin{proposition}[Genus-zero mixed residual reduction]\label{prop:genus-zero}
Every genus-zero mixed residual datum of the form
\[
  (F,B_1,B_2,B_3)
\]
is realizable, assuming the three-point hypothesis of
Theorem~\ref{thm:main}.
\end{proposition}

\begin{proof}
Put \(a=b(F)\) and \(n_i=\ell(B_i)\).  We first prove that every pair
\((F,B_i)\) is A1-small.  By Lemma~\ref{lem:fpf-defect},
\[
  n_i=d-b(B_i)\le
  \begin{cases}
    d/2-1, & d\text{ even},\\
    (d-1)/2, & d\text{ odd}.
  \end{cases}
\]
Hence for any two distinct indices \(j,k\),
\[
  n_j+n_k\le d-1.
\]
If some \(n_i\le a\), then the genus-zero length equation
\eqref{eq:length-sum} would give
\[
  n_1+n_2+n_3\le a+(d-1)<d+a+2,
\]
a contradiction.  Thus \(n_i\ge a+1\) for each \(i\), and therefore
\[
  a+b(B_i)\le d-1.
\]
Equivalently, each pair \((F,B_i)\) is A1-small.

Because the datum lies in the mixed residual case of
Proposition~\ref{prop:standard-residual}, that proposition strengthens this
A1-small inequality for every \((F,B_i)\) to
\[
  a+b(B_i)\le d-3.
\]
Since \(b(B_i)=d-n_i\), it follows that
\(n_i\ge a+3\).  Define
\[
  q_i=n_i-a-1\ge2\qquad(i=1,2,3).
\]

Suppose, toward contradiction, that no pair \((F,B_i)\) admits
a gcd-one Move A1 output from the construction in
Lemma~\ref{lem:single-component}.  By
\eqref{eq:gcd-complement}, no complement \(T\) of size \(q_i\)
for \(B_i\) gives gcd one.  Lemma~\ref{lem:gcd-obstruction-no-two} then implies
\[
  n_i\le d/3\qquad(i=1,2,3).
\]
Therefore
\[
  n_1+n_2+n_3\le d,
\]
contradicting the genus-zero length equation \eqref{eq:length-sum}:
\[
  n_1+n_2+n_3=d+a+2>d.
\]
Thus at least one pair \((F,B_i)\) admits a gcd-one Move A1
output.  Fix such an index \(i\), and let \(j,k\) be the other two indices.
Lemma~\ref{lem:single-component} and \eqref{eq:gcd-complement} give a partition
\(M\) with
\[
  \Cl_M\subset \Cl_F\Cl_{B_i},\qquad g(M)=1,\qquad b(M)=a+b(B_i).
\]
Replacing \(F,B_i\) by \(M\) gives the triple
\((M,B_j,B_k)\).  Its total defect is
\[
  b(M)+b(B_j)+b(B_k)
  =a+\sum_{r=1}^3 b(B_r)
  =2d-2.
\]
All three branches are gcd-one.  They are also nontrivial:
the branches \(B_j,B_k\) are fixed-point-free, while
\(b(M)=a+b(B_i)>0\).  Hence \(m(M)\ge2\), and
Lemma~\ref{lem:fpf-lcm} gives \(m(B_j),m(B_k)\ge6\).  Consequently,
\[
  \frac1{m(M)}+\frac1{m(B_j)}+\frac1{m(B_k)}
  \le \frac12+\frac16+\frac16
  =\frac56<1.
\]
For a triple, the lcm condition is equivalent to requiring
that this reciprocal sum differ from \(1\), so the reduced triple satisfies
that condition.  It is therefore Zieve-admissible.  The three-point hypothesis
of Theorem~\ref{thm:main} realizes the reduced triple, and
Proposition~\ref{prop:expansion} then realizes the original datum.
\end{proof}

\section{Proof of the main theorem}\label{sec:main-proof}

\begin{proof}[Proof of Theorem~\ref{thm:main}]
We use strong induction on the number \(k\ge3\) of branches.  The case \(k=3\)
is precisely the hypothesis of Theorem~\ref{thm:main}.  Let \(k\ge4\), and
assume that every Zieve-admissible datum with \(j\) branches is realizable for
all \(3\le j<k\).

Apply Proposition~\ref{prop:standard-residual}, whose alternatives are labeled
\textnormal{(i)}--\textnormal{(iii)}.  In
case~\textnormal{(i)}, let \(D\) be a reduced candidate datum supplied by that
alternative.  The datum \(D\) satisfies the hypotheses of
Corollary~\ref{cor:special-reduced-data} and is therefore realizable.
Proposition~\ref{prop:expansion} then realizes the original datum
\(\Lambda\).  In case~\textnormal{(ii)}, Move A1 produces a Zieve-admissible
datum with \(k-1\) branches.  The strong induction hypothesis
realizes this reduced datum,
and Proposition~\ref{prop:expansion} again realizes \(\Lambda\).

It remains to consider case~\textnormal{(iii)}, the mixed
residual case.  Every mixed A1-small pair \((F,B)\) then satisfies
\[
  b(F)+b(B)\le d-3.
\]
If the source genus is positive, Corollary~\ref{cor:positive} reduces
\(\Lambda\) to a Zieve-admissible datum with one fewer branch.  The strong
induction hypothesis realizes the reduced datum, and
Proposition~\ref{prop:expansion} realizes \(\Lambda\).  If the source genus is
zero, Proposition~\ref{prop:genus-zero} realizes \(\Lambda\) directly under the
three-point hypothesis.  Thus every alternative in
Proposition~\ref{prop:standard-residual} leads to realizability of \(\Lambda\),
which completes the induction.

\end{proof}

\subsection*{Acknowledgements} B.X. is supported in part by the Project of Stable Support for Youth Team in Basic Research Field, CAS (Grant No. YSBR-001) and NSFC (Grant Nos. 12271495). 

\bibliographystyle{amsplain}
\bibliography{refs}

\end{document}